\title[] {Asymptotic free independence and entry permutations for Gaussian random matrices.\\ Part II: infinitesimal freeness}
\author[M. Popa]{Mihai Popa}
\address[M. Popa]{Department of Mathematics\\ University of Texas at San Antonio\\ One UTSA Circle San Antonio\\ Texas 78249, USA
	\\ and “Simon Stoilow” Institute of Mathematics of the Romanian Academy\\ P.O. Box 1-764\\ 014700 Bucharest, Romania}
\email{mihai.popa@utsa.edu}
\author[K. Szpojankowski]{Kamil Szpojankowski}
\address[K. Szpojankowski]{
	Wydzia\l{} Matematyki i Nauk Informacyjnych\\
	Politechnika Warszawska\\
	ul. Koszykowa 75\\
	00-662 Warszawa, Poland.}
\email{kamil.szpojankowski@pw.edu.pl}
\thanks{KSz: This research was funded in part by National Science Centre, Poland WEAVE-UNISONO grant BOOMER 2022/04/Y/ST1/00008.
\\ For the purpose of Open Access, the authors have applied a CC-BY public copyright licence to any Author Accepted Manuscript (AAM) version arising from this submission.}
\author[P.-L. Tseng]{Pei-Lun Tseng}
\address[P. -L. Tseng]{Department of Mathematics\\ New York University Abu Dhabi\\
Saadiyat Marina District, Abu Dhabi, United Arab Emirates
}
\email{pt2270@nyu.edu}
\newtheorem{claim}{}[section]
\newtheorem{defn}[claim]{Definition}
\newtheorem{thm}[claim]{Theorem}
\newtheorem{lemma}[claim]{Lemma}
\newtheorem{remark}[claim]{Remark}
\newcommand{\Tr}{\textrm{Tr}}
\newcommand{\tr}{\mathrm{tr}}
\newcommand{\id}{\text{Id}}
\begin{document}

\begin{abstract}
We study asymptotic infinitesimal distributions of Gaussian Unitary Ensembles with permuted entries. We show that for a uniformly random permutation the asymptotically permuted GUE matrix has a null infinitesimal distribution. Moreover, we show that asymptotically different permutations of the same GUE matrix are infinitesimally free. Besides this we study a particular example of entry permutation - the transpose, and we show that while a GUE matrix is asymptotically free from its transpose it is not infinitesimally free from it.
\end{abstract}

\maketitle


\section{Introduction}

Free probability introduced by D. Voiculescu \cite{voi86,voi92} is a non--commutative analogue of classical probability theory, where classical random variables are replaced by non--commutative operators. This theory was introduced in connection to some fundamental problems from the theory of operator algebras (such as the Free Group Factors isomorphism problem), however very quickly (see \cite{voi91}) it was noticed that the novel theory has deep connections with the random matrix theory. Since then, the connections between free probability and random matrices have become a very active field with many advances in recent years (see \cite{MMPS,BBC,BMS}). A rough explanation of the phenomenon behind is as follows -- big unitarily invariant and independent random matrices are \emph{asymptotically free}. By this we mean that for two sequences of random matrices the quantity $\tfrac{1}{N} E\left(\Tr(P(A_N,B_N))\right)$, for a non-commutative polynomial $P$ converges, as matrix size $N$ goes to infinity, to $\varphi\left(P(a,b)\right)$ where $a,b$ are free random variables with respect to $\varphi$, and $a,b$ have distributions equal to the weak limits of the expected empirical eigenvalue distributions of $A_N$ and $B_N$ respectively.

In recent years some attention was given to $1/N$ correction in the convergence above (see e.g. \cite{shly18}), i.e. to look not only at $\varphi$ but also consider another functional $\varphi'$ which satisfies
\begin{align}\label{eqn:introPhi'}
    \frac{1}{N} E\left(\Tr(P(A_N,B_N))\right)=\varphi(P(a,b))+\frac{1}{N} \varphi'(P(a,b))+o\left(\frac{1}{N}\right) \mbox{ as } N\to\infty.
\end{align}
The special interest from non-commutative probability point of view is the case when $a,b$ are infinitesimally free with respect to the pair of functionals $\left(\varphi,\varphi'\right)$ (we state the precise definition of infinitesimal freeness in the next section). This notion, under in the framework free probability of type B, was introduced in \cite{BGN} and the infinitesimal freeness interpretation was found in \cite{BS12}, see also \cite{FN10} for combinatorial developments related to infinitesimal freeness. As mentioned above asymptotic freeness connects directly to random matrix theory and several classes of random matrices have been proved to be asymptotically infinitesimally free \cite{mingo19,DF19,mingo21} and many related properties have been discovered \cite{FN10,Ts22,Ts19,CE19,F12,HS11}.  

Another recent development in random matrix theory is that freeness emerges also when one looks at different entry permutations of a given random matrix. In order to explain this phenomenon let us fix some notation. For any $N\geq 1$ let $A_N=(a_{i,j})$ be an $N\times N$ matrix and $\sigma_N:[N]^2\to [N]^2$ be a bijection; that is, $\sigma_N$ is a permutation on $[N]^2:=\{1,\dots,N\}^2$. By $A_N^{\sigma_N}$ we denote the permuted matrix, that is we have  $[A_N^{\sigma_N}]_{i,j}=a_{\sigma(i,j)}$. Among permutations there are many interesting mappings such as partial transposes, which are of interest in quantum information theory \cite{AGSSW,BTNI,HMHP}, and the mixed map in quantum physics \cite{DTDS,MALT}. The connection between matrix permutation and free probability were also explored in
\cite{MP16,MP19,MP20, mvp_gaussian,mpsz1, mpsz2}. In \cite{mvp_gaussian} the author showed that for a given sequence of Gaussian random matrices $(G_N)_N$, and two sequences of permutations of these matrices $\left(G^{\sigma_N}_N\right)_N$ and $\left(G^{\tau_N}_N\right)_N$ the permuted matrices are asymptotically (as $N\to \infty$) circular and asymptotically  free, whenever pairs of permutation sequences $(\sigma_N)_N$ and $(\tau_N)_N$ satisfy certain conditions. Moreover, following these developments in \cite{mpsz1}, the authors proved that such conditions occurs with probability one, which means that $G^{\sigma_N}_N$ and $G^{\tau_N}_N$ are asymptotically circular and asymptotically free for almost all pairs of independent permutation sequences $(\sigma_N)_N$ and $(\tau_N)_N$.

In the present paper we show that the framework described above gives not only the asymptotic freeness but also asymptotic infinitesimal freeness. More precisely we show that:
\begin{itemize}
    \item Asymptotically the infinitesimal distribution of a randomly permuted (with uniformly chosen permutation of entries) growing GUE matrix is zero.
    \item Independent permutations of a sequence of growing GUE matrices are asymptotically infinitesimally free.
    \item a
    GUE matrix and its transpose are not asymptotically infinitesimally free, even though they are
    asymptotically free \cite{MP16}, but we can explicitly compute their joint infinitesimal distribution.
\end{itemize}
Moreover we show that the phenomenon described above does not hold for any sequence of matrix permutations, namely we show that the sequence of GUE matrices is not asymptotically infinitesimally free from its transposes, although asymptotic freeness takes place as it was shown in \cite{MP16}.

The paper is organized as follows. We review the framework and properties of infinitesimal freeness in Section 2. In addition, some notation and a basic lemma on permuted Gaussian matrices are also included in Section 2. The infinitesimal distribution of the generic permuted Gaussian matrix is considered in Section 3. 

In Section 4, we consider pairs of independent sequences of random permutations $(\sigma_N)_N$ and $(\tau_N)_N$, such that that $\sigma_N$ and $\tau_N$ are uniformly distributed random permutations from $S([N]^2)$ for each $N$. We discuss the joint infinitesimal distribution of $G_N, G_N^{\sigma_N},$ and $G_N^{\tau_N}$. From \cite{mpsz1} and \cite{mvp_gaussian} one can deduce that almost surely $G_N^{\sigma_N}$ and $G_N^{\tau_N}$ are asymptotically circular and asymptotically free. Here we show that that they have zero infinitesimal distribution. Moreover we prove that $\{G_N, G_N^{\sigma_N}, G_N^{\tau_N}\}$ are asymptotically infinitesimally free.

Recall that Gaussian random matrix $G_N$ and its transpose $G_N^{\top}$ are asymptotically free \cite{MP16}. However, $G_N$ and its transpose $G_N^{\top}$ is not asymptotically infinitesimally free. Indeed,  
we find the asymptotic joint infinitesimal law of $G_N$ and $G_N^{\top}$ in Section 5. More precisely, we show that the asymptotic values (as $ N \rightarrow \infty $) of the infinitesimally free joint cumulants of
 $ G_N $ 
 and 
 $ G_N^\top $ 
 are (here each 
 $ \varepsilon_j $  is either the identity or the matrix transpose):
 \begin{align*}
 \lim_{N \rightarrow\infty}\kappa^\prime_p ( G_N^{\varepsilon_1}, G_N^{\varepsilon_2}, \dots, G_N^{\varepsilon_p}) = \left\{
 \begin{array}{ll}
 1 & \textrm{ if } p = 2m, \varepsilon_1 \neq \varepsilon_{m+1}, \varepsilon_m \neq \varepsilon_{2m}, \\
  & \textrm{ and } \varepsilon_s 
  \neq \varepsilon_{2m+1 - s} \textrm{ for } s =2, \dots, m-1\\
 0 & \textrm{ otherwise. }
 \end{array}  \right.
 \end{align*} 
 Which shows that $G_N$ and $G_N^{\top}$ are not asymptotically infinitesimally free, but have very regular joint infinitesimal free cumulants.
\section{Preliminaries}\label{section:1}

In this section, we first introduce the framework of infinitesimal freeness, then we 
review the notion of Gaussian matrices and establish the notation that we use for studying permuted Gaussian matrices. 

\subsection{Infinitesimal Free Probability}

Let us begin by recalling some notions in free probability theory. We say that $(\mathcal{A},\varphi)$ is a \emph{non-commutative probability space (ncps for short)} whenever $\mathcal{A}$ is a unital $*$-algebra over $\mathbb{C}$, and $\varphi:\mathcal{A}\to\mathbb{C}$ is a linear functional such that $\varphi(1)=1$ and $\varphi(a^*a)\geq 0$ for all $a\in\mathcal{A}$. We say unital subalgebras $\mathcal{A}_1,\dots,\mathcal{A}_s$ are \emph{free} if $a_1,\dots,a_n\in\mathcal{A}$ with $\varphi(a_j)=0$ for each $j=1,\dots,n$ and $a_j\in\mathcal{A}_{i_j}$ with $i_1\neq i_2\neq \cdots \neq i_{n-1}\neq i_n $ we have 
$$
 \varphi(a_1\cdots a_n)= 0.
$$ 

If $(\mathcal{A},\varphi)$ is a ncps with an additional linear functional $\varphi':\mathcal{A}\to\mathbb{C}$ such that $\varphi'(1)=0$ and $\varphi'(a^*)=\overline{\varphi'(a)}$ for all $a\in\mathcal{A}$, then we call the triple $(\mathcal{A},\varphi,\varphi')$ an \emph{infinitesimal probability space}. 

The natural framework of an infinitesimal probability space for algebras of random matrices was considered in \cite{BS12,shly18}.

Denote by 
$\mathbb{C}\langle X_1,\dots,X_k\rangle$
 the complex unital algebra of polynomials in $k$ non-commuting indeterminates. 
 Assume that 
 $(A_N^{(1)},\dots,A_N^{(k)})_N$
 is a sequence of $k$-tuples of random matrices 
 such that each
 $A_N^{(1)},\dots,A_N^{(k)}$
 are 
 $N\times N$ random matrices,
and  consider the sequence of linear maps $(\varphi_N)_N $ on $\mathbb{C}\langle X_1,\dots,X_k\rangle$ 
defined by
\[
\varphi_N(P)= \frac{1}{N} (E\circ Tr)\left(P(A_N^{(1)},\dots,A_N^{(k)})\right). 
\]
We say that the sequence  $(A_N^{(1)},\dots,A_N^{(k)})_N$ has asymptotic distribution 
if the limit 
$\displaystyle \varphi(P):=\lim_{N\to\infty}\varphi_N(P)$ 
exists for all 
$P\in \mathbb{C}\langle X_1,\dots,X_k\rangle$. Furthermore, if 
$$
\varphi'(P):=\lim_{N\to\infty} N[\varphi_N(P)-\varphi(P)]$$ 
exists for all $P$, then 
$\{A_N^{(1)},\dots,A_N^{(k)}\}$ is said to have the asymptotic infinitesimal distribution. We say that $\{A_N^{(1)},\dots,A_N^{(k)}\}$ are asymptotically infinitesimally free when asymptotically their joint moments with respect to to the pair of functionals $(\varphi,\varphi')$ are calculated according to the rule defined below.

 \begin{defn}
 Suppose that $(\mathcal{A},\varphi,\varphi')$ is an infinitesimal probability space. We say that unital subalgebras $\mathcal{A}_1,\dots,\mathcal{A}_s$ are infinitesimally free if for every  $ n $ and for all $a_1,\dots,a_n\in\mathcal{A}$ with $\varphi(a_j)=0$ for each $j=1,\dots,n$ and $a_j\in\mathcal{A}_{i_j}$ with $i_1\neq i_2\neq \cdots \neq i_{n-1}\neq i_n $ we have 
 \begin{eqnarray}\label{inf free cond}
 \varphi(a_1\cdots a_n)&=& 0; \nonumber \\
 \varphi'(a_1\cdots a_n)&=& \sum\limits_{j=1}^n \varphi'(a_j)\varphi(a_1\cdots a_{j-1}a_{j+1}\cdots a_n).
 \end{eqnarray}
 \end{defn}	
 It is easy to see that the condition \eqref{inf free cond} in the definition of infinitesimal freeness is equivalent to (see \cite{FN10} for a detailed explanation)
 \begin{eqnarray*}
\lefteqn{\varphi'(a_1\cdots a_n)} \\
&=& \left\{
        \begin{array}{lr}
        \varphi(a_1a_n)\varphi(a_2a_{n-1})\cdots \varphi(a_{(n-1)/2}a_{(n+3)/2})\varphi'(a_{(n+1)/2})  \\
        \ \mbox{if}\ n\ \mbox{is odd and}\ i_1=i_n,i_2=i_{n-1}\dots, i_{(n-1)/2}=i_{(n+1)/2} \\ 
        0 \ \mbox{otherwise} 
        \end{array}.
\right.
\end{eqnarray*}   

	Fix a unital algebra $\mathcal{A}$ and sequences of multilinear functionals $\{f_n:\mathcal{A}^n\to \mathbb{C}\}_{n\geq 1}$ and $\{f_n':\mathcal{A}^n\to \mathbb{C}\}_{n\geq 1}$. For a given partition $\pi\in NC(n)$, we define 
	$$
	f_{\pi}(a_1,\dots,a_n)=\prod_{V\in \pi} f_{|V|}(a_1,\dots,a_n \mid V) 
	$$
	where $(a_1,\dots,a_n|V)=(a_{i_1},\dots,a_{i_s})$ whenever $V=\{i_1<\cdots <i_s\}$.
	
    Moreover, we define $\partial f_{\pi}$ by 
    $$
    \partial f_{\pi}(a_1,\dots,a_n)=\sum_{V\in \pi} \partial f_{\pi,V}(a_1,\dots,a_n)
    $$
    where 
    $$
    \partial f_{\pi,V}(a_1,\dots,a_n)= f'_{|V|}(a_1,\dots,a_n \mid V )\prod_{W\in \pi,\ W\neq V}f_{|W|}(a_1,\dots,a_n \mid W).
    $$
    \begin{defn}
    Let $(\mathcal{A},\varphi,\varphi')$ be an infinitesimal probability space, the free cumulants $\{\kappa_n:\mathcal{A}^n\to \mathbb{C}\}_{n}$ and infinitesimal free cumulants $\{\kappa'_n:\mathcal{A}^n\to \mathbb{C}\}_n$ are sequences of multilinear functionals are defined inductively via 
    \begin{eqnarray}\label{infcumuant formula}
    \varphi(a_1\cdots a_n)&=&\sum\limits_{\pi\in NC(n)}\kappa_{\pi}(a_1,\dots,a_n), \text{ and } \nonumber \\
    \varphi'(a_1\cdots a_n)&=& \sum\limits_{\pi \in NC(n)}\partial \kappa_{\pi}(a_1,\dots,a_n). 
    \end{eqnarray}
    \end{defn}
    In \cite{FN10} the authors showed that the infinitesimal freeness can be characterized by the vanishing of $(\kappa_n,\kappa_n')_{n\geq 1}$. For reader's convenience we recall here the precise statement.
    \begin{thm}\label{vanish inf prop}
    Suppose that $(\mathcal{A},\varphi,\varphi')$ is an infinitesimal probability space, and $\mathcal{A}_1,\dots,\mathcal{A}_n$ are unital subalgebras. Then the following statement are equivalent. 
    \begin{itemize}
        \item [(i)] $\mathcal{A}_1,\dots,\mathcal{A}_n$ are infinitesimally free;
        \item [(ii)] For each $s\geq 2$ and $i_1,\dots,i_s\in [n]$ which are not all equal, and for $a_1\in\mathcal{A}_{i_1},\dots,a_s\in\mathcal{A}_{i_s}$, we have 
        $$
        \kappa_s(a_1,\dots,a_n)=\kappa'_s(a_1,\dots,a_n)=0.
        $$
    \end{itemize}
    
    \end{thm}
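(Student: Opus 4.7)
The plan is to reduce Theorem \ref{vanish inf prop} to the classical Speicher characterization of freeness via the \emph{dual-number extension}. Introduce the ring $\mathbb{G}:=\mathbb{C}[\hbar]/(\hbar^{2})$, form the $\mathbb{G}$-algebra $\widetilde{\mathcal{A}}:=\mathcal{A}\otimes_{\mathbb{C}}\mathbb{G}$, and package $(\varphi,\varphi')$ into a single $\mathbb{G}$-valued unital functional
\[
\widetilde{\varphi}(a+\hbar b):=\varphi(a)+\hbar\bigl(\varphi'(a)+\varphi(b)\bigr).
\]
Since $\mathbb{G}$ is a commutative unital $\mathbb{C}$-algebra, the scalar moment-cumulant formalism — noncrossing partitions, multiplicativity across blocks, Möbius inversion, and Speicher's vanishing-cumulant characterization of freeness — transfers verbatim to $(\widetilde{\mathcal{A}},\widetilde{\varphi})$ with $\mathbb{G}$-valued cumulants $\widetilde{\kappa}_{n}$.

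The second step is to identify $\widetilde{\kappa}_{n}$ with the pair $(\kappa_{n},\kappa'_{n})$. Writing $\widetilde{\kappa}_{n}=\kappa^{(0)}_{n}+\hbar\,\kappa^{(1)}_{n}$ and inserting into $\widetilde{\varphi}(a_{1}\cdots a_{n})=\sum_{\pi\in NC(n)}\widetilde{\kappa}_{\pi}$, matching $\hbar^{0}$-coefficients returns the ordinary moment-cumulant relation and forces $\kappa^{(0)}_{n}=\kappa_{n}$. The $\hbar^{1}$-coefficient of $\prod_{V\in\pi}\widetilde{\kappa}_{|V|}$ is, by the Leibniz rule, exactly the sum $\partial f_{\pi}$ in the paper's notation with $f_{n}=\kappa_{n}$ and $f'_{n}=\kappa^{(1)}_{n}$; comparing with the second line of \eqref{infcumuant formula} and applying the same inductive inversion on $n$ then forces $\kappa^{(1)}_{n}=\kappa'_{n}$.

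It remains to show that infinitesimal freeness of $\mathcal{A}_{1},\dots,\mathcal{A}_{n}$ is equivalent to ordinary $\mathbb{G}$-freeness of the extended subalgebras $\widetilde{\mathcal{A}_{i}}:=\mathcal{A}_{i}\otimes\mathbb{G}$ in $(\widetilde{\mathcal{A}},\widetilde{\varphi})$. Given $a_{j}\in\mathcal{A}_{i_{j}}$ with $\varphi(a_{j})=0$ and $i_{1}\neq\cdots\neq i_{n}$, the lift $\widetilde{a}_{j}:=a_{j}-\hbar\varphi'(a_{j})\cdot 1$ lies in $\widetilde{\mathcal{A}_{i_{j}}}$ and is $\widetilde{\varphi}$-centered, and expanding modulo $\hbar^{2}$ gives
\[
\widetilde{\varphi}(\widetilde{a}_{1}\cdots\widetilde{a}_{n})=\varphi(a_{1}\cdots a_{n})+\hbar\Bigl[\varphi'(a_{1}\cdots a_{n})-\sum_{j=1}^{n}\varphi'(a_{j})\,\varphi(a_{1}\cdots\widehat{a_{j}}\cdots a_{n})\Bigr],
\]
which vanishes exactly under the pair of conditions \eqref{inf free cond}. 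The only other $\widetilde{\varphi}$-centered elements of $\widetilde{\mathcal{A}_{i_{j}}}$, those of the form $\hbar z$ with $z\in\mathcal{A}_{i_{j}}$ and $\varphi(z)=0$, produce alternating scalar moments of $\varphi$ that vanish on either side of the desired equivalence by ordinary freeness. Speicher's vanishing-mixed-cumulant criterion applied to $(\widetilde{\mathcal{A}},\widetilde{\varphi})$ thus separates into $\kappa_{n}=0$ and $\kappa'_{n}=0$ on mixed tuples, yielding (i)$\Leftrightarrow$(ii).

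The step I expect to require the most care is the second one: verifying that the $\hbar$-Leibniz rule applied to $\prod_{V}\widetilde{\kappa}_{|V|}$ matches the \emph{a priori} unrelated combinatorial operator $\partial\kappa_{\pi}$ attached by the paper to a partition. Once this compatibility — essentially that $\hbar$-differentiation commutes with Möbius inversion on $NC(n)$ — is established, the remainder is a formal transfer of Speicher's theorem across the scalar-ring extension $\mathbb{C}\hookrightarrow\mathbb{G}$, and the purely combinatorial content of the statement is handled by bookkeeping.
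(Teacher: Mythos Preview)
The paper does not give its own proof of this theorem: it is quoted from \cite{FN10} (F\'evrier--Nica) and stated ``for reader's convenience,'' so there is nothing in the present paper to compare your argument against.

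That said, your dual-number approach is correct and is essentially the standard route to this result; indeed the $\mathbb{G}=\mathbb{C}[\hbar]/(\hbar^{2})$ extension is exactly the device \cite{BS12} uses to relate infinitesimal freeness to ordinary ($\mathbb{G}$-scalar) freeness, and the combinatorics in \cite{FN10} are organized around the same idea. Your identification $\widetilde{\kappa}_{n}=\kappa_{n}+\hbar\kappa'_{n}$ via the Leibniz expansion of $\prod_{V}\widetilde{\kappa}_{|V|}$ is exactly right and does match the paper's $\partial\kappa_{\pi}$ operator, so the step you flagged as delicate is in fact routine. One minor point of presentation: a general $\widetilde{\varphi}$-centered element of $\widetilde{\mathcal{A}_{i}}$ is $\widetilde{a}+\hbar z$ with $\varphi(a)=\varphi(z)=0$, not merely one or the other; you should expand the alternating product of such sums modulo $\hbar^{2}$ and observe that the cross-terms $\hbar\sum_{j}\varphi(a_{1}\cdots z_{j}\cdots a_{n})$ vanish by ordinary freeness, rather than treating the two types of centered elements separately. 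You should also remark explicitly that Speicher's argument is purely combinatorial over $NC(n)$ and needs only that the scalars form a commutative unital ring, so the passage from $\mathbb{C}$ to $\mathbb{G}$ is harmless despite $\mathbb{G}$ not being a field.
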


\subsection{Permutations for Gaussian Random Matrices}\label{notations}
In this subsection we recall some notation and relevant facts about matrices with permuted entries. In particular we review results from \cite{mvp_gaussian} and \cite{mpsz1} and we refer to these two papers for proofs.
 \begin{defn}
	By an 
	$ N \times N $
	 Gaussian random  matrix we will understand a matrix 
	  $ G = [ g_{i j}]_{ 1 \leq i, j \leq N } $  whose entries satisfy the following conditions:
	\begin{itemize}
		\item[(i)] $ g_{i, j} = \overline{ g_{j, i}}$ for all $ i, j \in [ N ] $;
		\item[(ii)]  $ \{ g_{i, j}: 1 \leq i \leq j \leq N \} $ is a family of independent, identically distributed complex $($if $i \neq j)$ or real $($if $ i = j )$ Gaussian random variables of mean 0 and variance 
		$ \displaystyle \frac{1}{N} $.
	\end{itemize} 
\end{defn}

If $ n $ is a positive integer, we shall denote by $ [ n ] $ the ordered set 
$ \{ 1, 2, \dots, n \} $. 
The set of pair partitions of
$ [ n] $ is denoted by $ P_2 (n) $. In particular, if 
$ n $ 
is odd, then 
$ P_2 (n) = \emptyset $.

The set of all permutations of $ [ N ] \times [ N ] $ will be denoted by 
$ \mathcal{S} ( [ N ]^2) $. For $(i,j)\in [N]\times [N]$, we denote by $\top$ the transpose, i.e. we have $\top(i,j)=(j,i)$. 

In addition, for a given $\sigma\in \mathcal{S}([N]^2)$, we define 
$$
\sigma(A)=\{\sigma(i,j) : (i,j)\in A\} \text{ for }A\subset [N]\times [N].
$$
Recall that for 
an
$ N \times N $ Gaussian random matrix $ G_N $ and
$ \sigma \in \mathcal{S} ([ N ]^2 ) $,
we denote 
$ G_N^\sigma $ to be 
the random matrix whose 
$ (i, j)$-entry equals the
 $ \sigma (i, j)$-entry of 
$ G_N $; 
i.e.
$$ [G_N^{ \sigma } ]_{ i, j} =  g_{\sigma(i, j)}. 
$$

 Assume that for each positive integer $ N $ and each
  $ k = 1, 2, \dots, m $, 
 $ \sigma_{ k, N } $ 
 is a permutation from 
 $ \mathcal{S} ([ N ]^2) $.
 With these notations, we have that
 \begin{align*} E \circ \tr 
 \Big( 
 G_N^{ \sigma_{1, N}}
  \cdot G_N^{ \sigma_{2, N}} 
 \cdots 
 G_N^{ \sigma_{m, N}} 
 \Big) 
 &=\\  \sum_{ 1 \leq i_1, \dots, i_m \leq N } &
 \frac{1}{N} E \big( 
 [ G_N^{ \sigma_{1, N}}]_{ i_1, i_2} \cdot
 [ G_N^{ \sigma_{2, N}} ]_{ i_2, i_3} 
 \cdots 
 [  G_N^{ \sigma_{m, N}} ]_{ i_m, i_1}
 \big) \\
 \end{align*}
 As shown in \cite{mvp_gaussian}, using Wick's formula (see \cite{jason}) for the right-hand side of the equation above, with the identification 
 $i_{m+1} = i_1$,
  we obtain
\begin{align}\label{eq:1}
 E \circ \tr 
\Big( 
G_N^{ \sigma_{1, N}}
\cdot G_N^{ \sigma_{2, N}} 
\cdots 
G_N^{ \sigma_{m, N}} 
\Big) 
= 
\sum_{ \pi \in P_2(m)}  \mathcal{V}_{ \overrightarrow{\sigma_N} } (\pi ) 
\end{align}
where we use short-hand notation 
 $ \overrightarrow{\sigma_N } =
  ( \sigma_{1, N}, \sigma_{2, N}, \dots, \sigma_{m, N}) $ and
\[ \mathcal{V}_{ \overrightarrow{\sigma_N}} ( \pi)=
\frac{1}{N}  \sum_{ 1 \leq i_1, \dots, i_m \leq N } \prod_{ (k, l) \in \pi } 
E \big( [ G_N^{ \sigma_{k, N}}]_{ i_k, i_{k+1}} \cdot
[ G_N^{ \sigma_{l, N}} ]_{ i_l, i_{l+1}} \big).
\] 
Moreover, since 
$ G_N $ 
is Gaussian,
 $ \displaystyle  E \big( [ G_N^{ \sigma_{k, N}}]_{ i_k, i_{k+1}} \cdot
 [ G_N^{ \sigma_{l, N}} ]_{ i_l, i_{l+1}} \big) 
 = \frac{1}{N}\delta_{ \sigma_{k, N} ( i_k, i_{ k + 1})}^{ \top\circ \sigma_{l, N} (i_l, i_{ l+ 1})}.
 $
Since it is important to keep track of which indices are equal to each other, it is standard to encode this with pair partitions.  Therefore for a given $\pi\in P_2(m)$, we denote by $\mathcal{A}_{ \pi, \overrightarrow{\sigma}_N}$ such sequences of indices which respect $\overrightarrow{\sigma}_N$ and $\pi$ in the sense that they contribute in the sum above. To make the above precise we will view any pair partition as a permutation, where each block becomes a cycle, and we will write $\pi(k)$, to mean the image of $k$ under the permutation (induced by) $\pi$. With the identification $i_{m+1}=i_1$ we define

 \begin{align*}
 \mathcal{A}_{ \pi, \overrightarrow{\sigma}_N} 
  = \big\{ (i_s, j_s)_{ s \in [ m ]} :\  
  j_k=i_{k+1}    \textrm{ and }
   \sigma_{ k, N} (i_k, j_k) = 
  \top \circ & \sigma_{ \pi(k), N }  ( i_{ \pi(k)}, j_{ \pi(k)} ) \\
 &  
  \textrm{ \ \ \  \ \ for all }k \in [ m ]
   \big\}
 \end{align*}
 and
 \[ \mathfrak{a}_{ \pi, \overrightarrow{\sigma}_N} = 
 \log_N | \mathcal{A}_{ \pi, \overrightarrow{\sigma}_N} | - ( \frac{m}{2} + 1).
  \]
 Then \eqref{eq:1} can be rewritten as
 \[
  E \circ \tr 
 \Big( 
 G_N^{ \sigma_{1, N}}
 \cdot G_N^{ \sigma_{2, N}} 
 \cdots 
 G_N^{ \sigma_{m, N}} 
 \Big) 
 = \sum_{\pi \in P_2(m)} | \mathcal{A}_{ \pi, \overrightarrow{\sigma}_N} |N^{-(\frac{m}{2}+1)}
 = 
 \sum_{ \pi \in P_2(m)} N^{ \mathfrak{a}_{ \pi, \overrightarrow{\sigma}_N}}.
 \]

  For 
 $ B = \{ \beta_1, \beta_2, \dots, \beta_r \} $
 an ordered subset of 
 $ [ m ] $  
 and 
 $ (i_s, j_s)_{s\in [ m ]} \in N^{2m} $
 we will be interested in subsequences $ (i_s, j_s)_{s\in B} $ 
 i.e.
 $ (i_{\beta_1}, j_{\beta_1}, \dots, i_{\beta_r}, j_{\beta_r}) $ which can be extended to an element from $\mathcal{A}_{ \pi, \overrightarrow{\sigma}_N} $, thus we define
 \begin{align*}\mathcal{A}_{ \pi, \overrightarrow{\sigma}_N} (B)
 = \big\{ (i_s, j_s)_{s\in B } :\ 
  \textrm{ there exists some } &(i_s, j_s)_{s \in [ m ]\setminus B } \\  & \textrm{ such that }
   (i_s, j_s)_{s \in [ m ] } \in \mathcal{A}_{\pi, \overrightarrow{\sigma_N} }
   \big\}
 \end{align*}
 and let
 \[
 \mathfrak{a}_{\pi, \overrightarrow{\sigma}_N} (B)
 = 
 \log_N | \mathcal{A}_{ \pi, \overrightarrow{\sigma}_N} (B) | - | B |  + \frac{1}{2}| B^2\,_{| \pi} | - 1 .
 \]
 where $B^2\,_{| \pi}$ can be understand as the range of $\pi$ i.e. we have
 $ B^2\,_{| \pi} = \{ (k, l )  \in B \times B :\  \pi(k) = l \} $.
 
 The following results are shown in \cite{mvp_gaussian} (see  Lemmas 2.2 --2.4).

\begin{lemma}\label{lemma:01} ${}$
	
\begin{enumerate}
	\item [(i)] If $ k \in B $, then 
$\mathfrak{a}_{ \pi, \overrightarrow{\sigma}_N} (B \cup \{ k -1 \} )
\leq
\mathfrak{a}_{ \pi, \overrightarrow{\sigma}_N} (B )	$ 
and \\
${}$ \hspace{5.5cm} $ \mathfrak{a}_{ \pi, \overrightarrow{\sigma}_N} (B \cup \{ k + 1 \} )
\leq 
\mathfrak{a}_{ \pi, \overrightarrow{\sigma}_N} (B ) $. 

In particular, $\mathfrak{a}_{ \pi, \overrightarrow{\sigma}_N} (B ) \geq \mathfrak{a}_{ \pi, \overrightarrow{\sigma}_N}$ for all $B\subset [m]$.
	\item[(ii)]
	If $ k \in B $, then 
	$ 
	\mathfrak{a}_{\pi, \overrightarrow{\sigma}_N} (B \cup \{ \pi(k)\} )
	\leq
	\mathfrak{a}_{\pi, \overrightarrow{\sigma}_N} (B).
	$
	\item[(iii)] If $ \{k, \pi (k)\} \cap B = \emptyset $
	and 
	$ \{ k-1, k+1 \} \subseteq B $, 
	then \\
${}$ \hspace{5cm}	$ \mathfrak{a}_{\pi, \overrightarrow{\sigma}_N} (B \cup \{ k \} )
	 \leq 
\mathfrak{a}_{\pi, \overrightarrow{\sigma}_N} (B ) -1  . $
	\end{enumerate}
\end{lemma}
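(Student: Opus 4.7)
The strategy is to view each of the three inequalities as a bound on the change
\[\mathfrak{a}_{\pi,\overrightarrow{\sigma}_N}(B\cup\{k'\}) - \mathfrak{a}_{\pi,\overrightarrow{\sigma}_N}(B)\]
when a single new index $k'\notin B$ is inserted into $B$. Unpacking the definition, this change equals $\bigl(\log_N|\mathcal{A}_{\pi,\overrightarrow{\sigma}_N}(B\cup\{k'\})|-\log_N|\mathcal{A}_{\pi,\overrightarrow{\sigma}_N}(B)|\bigr)-1+\tfrac{1}{2}\Delta$, where the combinatorial increment $\Delta=|(B\cup\{k'\})^2_{|\pi}|-|B^2_{|\pi}|$ lies in $\{0,2\}$ and equals $2$ precisely when $\pi(k')\in B$ (because $\pi$ is a fixed-point free involution). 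So everything reduces, in each scenario, to bounding how much the number of admissible projections can grow after adjoining one coordinate.

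Two features of the admissibility conditions will control this growth. First, the cyclic linking $j_s=i_{s+1}$ coming from the trace lets $j_s$ be read off from $i_{s+1}$ and vice versa. Second, the pair-matching $\sigma_s(i_s,j_s)=\top\circ\sigma_{\pi(s)}(i_{\pi(s)},j_{\pi(s)})$, combined with bijectivity of $\sigma_s$ and $\sigma_{\pi(s)}$, says that the pair $(i_{\pi(s)},j_{\pi(s)})$ is \emph{uniquely determined} by $(i_s,j_s)$. With these two observations, parts (ii) and (iii) are immediate: for (ii), taking $k\in B$ and $k'=\pi(k)$, the pair-matching forces $(i_{k'},j_{k'})$ from $(i_k,j_k)$, so $|\mathcal{A}_{\pi,\overrightarrow{\sigma}_N}(B\cup\{k'\})|\le|\mathcal{A}_{\pi,\overrightarrow{\sigma}_N}(B)|$, and since $\Delta=2$ the change is bounded by $0-1+1=0$; for (iii), taking $k'=k$ with $\{k-1,k+1\}\subset B$ and $\pi(k)\notin B$, the linking already forces $i_k=j_{k-1}$ and $j_k=i_{k+1}$, so the count does not grow, $\Delta=0$, and the change is $-1$.

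For part (i), I will take $k\in B$ and $k'=k-1$ (the case $k'=k+1$ being symmetric). Since $j_{k'}=i_k$ is already determined by $k\in B$, the only coordinate that could be free is $i_{k'}$, contributing at most a factor of $N$ in the count. When $\pi(k')\notin B$ one has $\Delta=0$ and the change is bounded by $1-1+0=0$. When $\pi(k')\in B$ the naive bound gives $1-1+1=1$, which is insufficient; the delicate step, and what I expect to be the main obstacle, is to observe that the pair-matching equation $\sigma_{k'}(i_{k'},j_{k'})=\top\circ\sigma_{\pi(k')}(i_{\pi(k')},j_{\pi(k')})$ then has known right-hand side and known $j_{k'}$, so by bijectivity of $\sigma_{k'}$ the coordinate $i_{k'}$ is in fact uniquely determined, the log-count does not grow, and the change becomes $0-1+1=0$. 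The in-particular assertion $\mathfrak{a}_{\pi,\overrightarrow{\sigma}_N}(B)\ge\mathfrak{a}_{\pi,\overrightarrow{\sigma}_N}$ will then follow by iterating these adjacency moves to grow any nonempty $B$ up to $[m]$, combined with the direct identity $\mathfrak{a}_{\pi,\overrightarrow{\sigma}_N}([m])=\mathfrak{a}_{\pi,\overrightarrow{\sigma}_N}$ obtained from $|[m]^2_{|\pi}|=m$.
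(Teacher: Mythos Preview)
Your argument is correct: the fiber-counting via the projection $\mathcal{A}_{\pi,\overrightarrow{\sigma}_N}(B\cup\{k'\})\to\mathcal{A}_{\pi,\overrightarrow{\sigma}_N}(B)$, together with the bookkeeping of the increment $\Delta\in\{0,2\}$, cleanly handles all three parts; in particular the delicate subcase of (i) with $\pi(k')\in B$ is treated exactly right, since bijectivity of $\sigma_{k'}$ pins down $i_{k'}$ once $j_{k'}=i_k$ and $(i_{\pi(k')},j_{\pi(k')})$ are known. Note that the present paper does not prove this lemma at all but merely quotes it from the companion paper \cite{mvp_gaussian} (Lemmas 2.2--2.4), so there is no in-text proof to compare against; your write-up supplies precisely the argument one would expect to find there.
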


\subsection{Basic probabilistic tools}

We will repeatedly use Borel--Cantelli lemma in order to prove almost sure convergence of some statistics of random uniform permutations. We show several similar lemmas, which nevertheless have substantially different proofs. We prove each lemma separately, in each case invoking the following basic fact.

\begin{lemma}\label{lem:BC}
    Let $(X_N)_N$ be a sequence of non-negative random variables, for $X_N\stackrel{1}{\to}0$ it suffices to show that for any $\varepsilon>0$ there exist $\delta>0$ and $C>0$ such that $\mathbb{P}(X_N>\varepsilon)\leq \tfrac{C}{N^{1+\delta}}$.
\end{lemma}


\section{The generic permuted Gaussian random matrix has zero infinitesimal distribution}\label{Section:3}

As explained in the previous section, in order to understand the joint moments of a Gaussian matrix with permuted entries it suffices to understand $\mathcal{V}_{ \overrightarrow{\sigma_N} } (\pi )$ for any pairing $\pi$. In this section we will consider random permutations, and we will consider asymptotic behaviour of $\mathcal{V}_{ \overrightarrow{\sigma_N} }$, where we assume that $(\sigma_N)_N$ is a sequence of uniformly random permutations with $\sigma(N)\in \mathcal{S} ( [ N ]^2 )$ for each $N\geq 1$. 
Note that in $\mathcal{V}_{ \overrightarrow{\sigma_N} } (\pi )$ we integrate with respect to the distribution of entries of the matrix, so the almost sure limits mentioned below are with respect to the sequence of random permutations only. Since a Gaussian matrix after a random permutation most likely is not self-adjoint we have to take care of complex-conjugate together with the matrix permutation. Observe that $(G^\sigma)^*_{(i,j)}=\overline{G_{\sigma(j,i)}}$ and this is exactly the same as $G^{\top \circ \sigma_N \circ \top}_{(i,j)}$, which motivates the notations we introduce below. The goal of this section is to prove the following theorem.

\begin{thm}\label{thm:3:1}
	Let
	 $ \big( \sigma_{N } \big)_N $
	  be a sequence of uniformly random permutations from 
$ \mathcal{S} ( [ N ]^2 ) $ and 
$ \varepsilon(1), \dots, \varepsilon(m) \in \{1,\ast\}$. Define
\[ 
\sigma_{k, N} = \left\{  
\begin{array}{ll}
\sigma_N & \textrm{ if } \varepsilon (k) = 1\\
\top \circ \sigma_N \circ \top & \textrm{ if } \varepsilon (k) = \ast . 
\end{array} \right.
\]

With the notations from Section \ref{notations}, almost surely we have that 
$ \mathcal{V}_{ \overrightarrow{\sigma_N}} ( \pi )  = o(N^{-1}) $ 
 unless 
 $ \pi $ 
 is non-crossing and
  $ \varepsilon ( k ) \neq \varepsilon ( \pi(k) ) $
 for all $ k \in [ m ] $. 
\end{thm}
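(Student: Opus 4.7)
The plan is to show that $\mathfrak{a}_{\pi,\overrightarrow{\sigma_N}} \le -2$ almost surely (for $N$ large enough) whenever $\pi$ is either crossing or violates $\varepsilon(k)\neq\varepsilon(\pi(k))$ for some $k$; combined with $\mathcal{V}_{\overrightarrow{\sigma_N}}(\pi) = N^{\mathfrak{a}_{\pi,\overrightarrow{\sigma_N}}}$ this immediately gives $\mathcal{V}_{\overrightarrow{\sigma_N}}(\pi)=o(N^{-1})$. The starting observation concerns a single pair $(k,l)\in\pi$: unwinding the defining constraint $\sigma_{k,N}(i_k,j_k)=\top\circ\sigma_{l,N}(i_l,j_l)$ using $\sigma_{k,N}\in\{\sigma_N,\,\top\circ\sigma_N\circ\top\}$ shows that if $\varepsilon(k)\neq\varepsilon(l)$ the two $\top$'s and the two $\sigma_N$'s cancel, so the condition reduces to a deterministic equation in the indices $i_\bullet,j_\bullet$ alone; whereas if $\varepsilon(k)=\varepsilon(l)$ the condition survives as a genuine relation on the values of $\sigma_N$, schematically $\sigma_N(x)=\top\sigma_N(y)$ for points $x,y\in[N]^2$ determined by the surrounding indices.

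Suppose first that $\pi$ is crossing but satisfies $\varepsilon(k)\neq\varepsilon(\pi(k))$ everywhere. Then every pair-constraint is deterministic, $|\mathcal{A}_{\pi,\overrightarrow{\sigma_N}}|$ is a purely combinatorial count, and I would bound $\mathfrak{a}_{\pi,\overrightarrow{\sigma_N}}$ by growing a subset $B\subseteq[m]$ starting from a singleton. Lemma~\ref{lemma:01}(i)--(ii) allow additions along the cycle or along a pair of $\pi$ without increasing $\mathfrak{a}$, while any crossing forces applications of Lemma~\ref{lemma:01}(iii) that strictly decrease $\mathfrak{a}$, in total by at least two (this is the genus-expansion argument used in \cite{mvp_gaussian}). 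Consequently $\mathfrak{a}_{\pi,\overrightarrow{\sigma_N}}\le -2$ deterministically in this case.

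Suppose instead that some pair $(k,\pi(k))$ has $\varepsilon(k)=\varepsilon(\pi(k))$. For uniform $\sigma_N\in\mathcal{S}([N]^2)$ the genuine constraint $\sigma_N(x)=\top\sigma_N(y)$ is satisfied with probability $O(N^{-2})$ when $x\neq y$ and $O(N^{-1})$ when $x=y$. Summing over all index tuples compatible with the remaining (good) pair constraints, and using Lemma~\ref{lemma:01} to control the counting at those deterministic pairs, I would obtain $E\bigl[\,|\mathcal{A}_{\pi,\overrightarrow{\sigma_N}}|\,\bigr]=O(N^{m/2-1})$, equivalently $E[\mathcal{V}_{\overrightarrow{\sigma_N}}(\pi)]=O(N^{-2})$. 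The almost sure statement then follows from a Markov plus Borel--Cantelli argument, which forces a higher-moment bound $E[\mathcal{V}_{\overrightarrow{\sigma_N}}(\pi)^k]=O(N^{-2k})$ obtained by the same constraint count applied to the $k$-fold product of integrals.

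The main obstacle lies in this last case. The probability of each $\sigma_N$-constraint depends sharply on which underlying points $x,y\in[N]^2$ coincide, so one must track uniformly all index coincidences forced by the cycle condition $j_m=i_1$, by the deterministic good pairs of $\pi$, and — when estimating higher moments — by further identifications across the independent copies of the constraints. The combinatorics of several simultaneous permutation-value constraints, pushed finer than in the variance estimate behind the asymptotic-freeness result of \cite{mpsz1}, is the technical heart of the proof.
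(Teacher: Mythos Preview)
Your dichotomy is sound and the first case is correct: when every block of $\pi$ satisfies $\varepsilon(k)\neq\varepsilon(\pi(k))$ the constraints collapse to $(i_k,i_{k+1})=(i_{\pi(k)+1},i_{\pi(k)})$, so $|\mathcal{A}_{\pi,\overrightarrow{\sigma_N}}|$ is the standard GUE count and the genus formula gives $\mathfrak{a}_{\pi,\overrightarrow{\sigma_N}}=-2g(\pi)\le-2$ for crossing $\pi$, deterministically. This is a cleaner treatment of that subcase than the paper's.

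The second case, however, is where your proposal remains a plan rather than a proof, and the paper takes a genuinely different route. You propose a \emph{global} moment estimate $E[\mathcal{V}_{\overrightarrow{\sigma_N}}(\pi)^k]=O(N^{-2k})$ over the random permutation, but you do not carry it out; as you note, the bookkeeping of simultaneous constraints $\sigma_N(x_r)=\top\sigma_N(y_r)$ across the $k$-fold product, together with all possible coincidences among the $x_r,y_r$ forced by the cycle and by the good pairs, is substantial. The paper avoids this entirely. It never estimates moments of the full $|\mathcal{A}_{\pi,\overrightarrow{\sigma_N}}|$; instead it first strips good interval blocks via the identity $\mathcal{V}_{\overrightarrow{\sigma_N}}(\pi)=\mathcal{V}_{\overrightarrow{\sigma_N}'}(\pi')$, then locates a \emph{small} subset $B\subseteq[m]$ (of size two or four, depending on whether $\pi$ retains a consecutive block or only a minimal crossing) and proves almost-sure $o(\cdot)$ bounds on $|\mathcal{A}_{\pi,\overrightarrow{\sigma_N}}(B)|$. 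These bounds (Lemmas~\ref{lemma:01:1} and~\ref{lemma:1:2:3}) concern only one or two simultaneous $\sigma_N$-constraints, so their moment/Borel--Cantelli arguments are short. The passage from $\mathfrak{a}(B)$ to $\mathfrak{a}([m])$ is then purely deterministic via Lemma~\ref{lemma:01}. Note also that the paper does not obtain the clean $\mathfrak{a}\le-2$ you aim for: the local lemmas yield fractional exponents (e.g.\ $o(N^{1/2})$), and the conclusion is only $\mathfrak{a}<-1$ in the $o(\cdot)$ sense, which is all that $\mathcal{V}=o(N^{-1})$ requires.

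In short: your approach is plausible and would likely succeed, but the hard step is precisely the one you defer; the paper's localisation to small $B$ trades your single global estimate for several much easier ones plus a deterministic propagation.
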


In order to prove Theorem \ref{thm:3:1} we need to establish several technical results.

\begin{lemma}\label{lemma:01:1}
Let 
$ \big(  \sigma_N  \big)_N $ 
be a sequence of uniformly random permutations, with each 
$ \sigma_N $ 
from 
$ \mathcal{S}( [ N ]^2 ) $.
For any constant $ \theta > 0 $ we have the following almost sure limits:
\begin{enumerate}
\item[(i)] $\displaystyle \lim_{N \rightarrow \infty}
N^{-(\frac{1}{2} + \theta)} \cdot
 \big|\{ (i, j) \in [ N ]^2:\
 \sigma_N (i, j) = \top \circ \sigma_N (j, i)   \}\big| = 0
$
\item[(ii)] $ \displaystyle 
\lim_{N \rightarrow \infty}
N^{-\theta} \cdot
\sup_{ 1 \leq i \leq N } \big| \{ (j, k) \in [ N ]^2:\ 
\sigma_N ( i, j) \in \{  \top \circ \sigma_N (j, k), \sigma_N \circ \top (j, k) \} \} \big| = 0.
$
\end{enumerate}
\end{lemma}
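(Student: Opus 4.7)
The plan is to bound the expected cardinality of each ``bad'' set using the uniform distribution of $\sigma_N$ on $\mathcal{S}([N]^2)$, and then to promote each first-moment bound to an almost-sure statement via a second- or higher-moment estimate combined with the Borel--Cantelli lemma. The key probabilistic input I would use is that, for any $r$ distinct positions $p_1,\dots,p_r\in[N]^2$, the values $(\sigma_N(p_1),\dots,\sigma_N(p_r))$ are uniformly distributed over ordered $r$-tuples of distinct elements of $[N]^2$, so joint probabilities of ``match'' events can be computed directly.

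For part (i), I would write $X_N=\sum_{(i,j)}\mathbf{1}_{ij}$ with $\mathbf{1}_{ij}=\mathbf{1}[\sigma_N(i,j)=\top\circ\sigma_N(j,i)]$ and split according to whether $i=j$ or $i\neq j$. In the first case the condition forces $\sigma_N(i,i)$ onto the diagonal of $[N]^2$, which has probability $1/N$. In the second case $(i,j)\neq(j,i)$, and a direct count over compatible value pairs $(a,b),(b,a)$ with $a\neq b$ yields probability $\frac{N(N-1)}{N^2(N^2-1)}=\frac{1}{N(N+1)}$. Summing gives $E X_N\leq 2$. I would then estimate $\mathrm{Var}(X_N)$ by expanding $E[\mathbf{1}_{ij}\mathbf{1}_{kl}]$ via the joint uniform distribution of $\sigma_N$ at up to four distinct positions; the non-overlapping configurations reproduce $(EX_N)^2+O(1/N)$, while the overlapping-index configurations are dominated by $EX_N$, giving $\mathrm{Var}(X_N)=O(1)$. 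Chebyshev's inequality then yields $P(X_N\geq\varepsilon N)=O(N^{-2})$, which is summable, and Borel--Cantelli gives $X_N/N\to 0$ almost surely.

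For part (ii), fix $i$, let $Y_N(i)$ denote the cardinality inside the supremum, and carry out an analogous first-moment calculation restricted to the genuinely probabilistic contributions (i.e.\ excluding those $(j,k)$ for which the condition is a tautology forced by the bijectivity of $\sigma_N$ at coinciding positions). Each of the two match events contributes $O(1/N^2)$ per non-trivial $(j,k)$, so summing over at most $N^2$ pairs gives $EY_N(i)=O(1)$ uniformly in $i$. To handle the supremum, I would combine the union bound $P(\sup_i Y_N(i)\geq\varepsilon N^{1/2})\leq N\cdot P(Y_N(i)\geq\varepsilon N^{1/2})$ with Markov's inequality applied to a sufficiently high integer moment: a bound of the form $E[Y_N(i)^p]=O_p(1)$ with $p>4$ converts the right-hand side into $O(N^{1-p/2})$, which is summable in $N$, so Borel--Cantelli again delivers almost-sure convergence.

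The main technical effort lies in the higher-moment estimates $E[X_N^2]$ and $E[Y_N(i)^p]$. Expanding these powers produces sums over tuples of index pairs, and for each pattern of coincidences among the involved positions and values the uniform joint distribution of $\sigma_N$ contributes an explicit factor of the form $\prod_{j}(N^2-j)^{-1}$; the combinatorial task is to verify, pattern by pattern, that the count of admissible index configurations balances this joint probability to give an order-one total. The bookkeeping is in the same spirit as the exponent analysis of Lemma \ref{lemma:01} for $\mathfrak{a}_{\pi,\overrightarrow{\sigma}_N}(B)$, and once the right grouping of coincidence patterns is in place the individual estimates are routine; the bulk of the work is organizing the case analysis so that only finitely many ``bad'' patterns can possibly inflate the moment.
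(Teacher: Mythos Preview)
Your proposal is essentially the paper's own argument: indicator decomposition, a second-moment bound plus Markov/Chebyshev and Borel--Cantelli for part (i), and a union bound over $i$ together with a high-moment Markov estimate (the paper takes $p=6$, you say any $p>4$) and Borel--Cantelli for part (ii), with the moment bounds obtained by counting, for each coincidence pattern among the positions, the number of admissible value configurations of a uniform permutation. The only cosmetic difference is that the paper applies Markov to $E[X_N^2]$ directly rather than passing through $\mathrm{Var}(X_N)$, which is equivalent since $E X_N=O(1)$.
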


\begin{proof}

For part (i),  for each 
$i, j \in [ N ] $,
denote by 
$ I_{ i, j} $
the random variable on 
$ \mathcal{S} ( [N]^2)	$
given by
  $ I_{i, j} ( \sigma) = \left\{
\begin{array}{ll}
1 & \textrm{ if } \sigma (i, j)= \top \circ \sigma (j, i) \\
0 & \textrm{ if } \sigma (i, j)\neq \top \circ \sigma (j, i)
\end{array}
\right. $
and let 
$ \displaystyle  X_N = \sum_{ i, j = 1}^N I_{i, j}.	$	


Using Markov's inequality, we have that 
$ \displaystyle \mathbb{P} ( N^{-(\frac{1}{2} + \theta )} X_N > \varepsilon ) \leq \frac{ \mathbb{E} (X_N^2)} { \varepsilon^2 N^{(1 + 2 \theta)} } $
so from Lemma \ref{lem:BC} it suffices to show that 
$ \mathbb{E} (X_N^2) $
is bounded. 

Since 
$ I_{i, j} = I_{j, i}= I^2_{i, j} $,
we have that 
\begin{align*}
\mathbb{E} (X_N^2) = \mathbb{E} \big( \big(\sum_{i, j=1}^N I_{i, j} \big)^2 \big) = 
2\sum_{i, j=1}^N \mathbb{E} (I_{i, j}) + 
\sum_{\substack{ 1 \leq i, j, k, l \leq N \\(i, j) \notin \{ (k, l), (l, k)\} } } 
\mathbb{E} ( I_{i, j} I_{k, l}).
\end{align*}

 To estimate 
$ \mathbb{E} ( I_{i, j}) $, 
note that if $j\neq j$ and
$ I_{i, j}(\sigma) \neq 0 $
the value of 
$ \sigma(i, j) $
uniquely determines the value of
$ \sigma( j, i) $;
there are
$ N^2 $ 
possible choices for
$\sigma(i, j) $
and hence 
$ \sigma (j, i)$
and
$(N^2 -2)!$
possibilities for the rest of the values of $\sigma $.
For elements on the diagonal, i.e.
  when $i=j$, we have that
$ I_{i, j}(\sigma)  \neq 0 $
 if and only if $ \sigma(j, i) $ is also on the diagonal. Therefore
\begin{align*}
\sum_{i, j=1}^N \mathbb{E}(I_{i, j})=\sum_{i\neq j}\mathbb{E}(I_{i, j})+\sum_{i}\mathbb{E}(I_{i, i}) \leq 2 {N \choose 2} \frac{N^2  \cdot (N^2 -2)! }{N^2!}+N \frac{1}{N}  \xrightarrow[{N \rightarrow \infty}]{} 2.
\end{align*}

Similarly, if 
$(i,j) \notin \{ (k, l), (l, k) \} $,
for 
$ I_{i, j} (\sigma) I_{k, l}(\sigma) \neq 0 $
there are 
$ N^2 $
possible choices for 
$ \sigma( i, j) $,
each uniquely determining at most one possible choice for 
$ \sigma(j, i) $. 
Then there are 
$ N^2 - 2 $
possible choices for
$ \sigma(k, l) $,
each determining at most one possible choice for
$ \sigma(l, k) $
and
$ N^2 - 4 $ 
possible choices for the rest of the values of the permutation
$\sigma $.
Hence
\begin{align*}
\sum_{\substack{ 1 \leq i, j, k, l \leq N \\(i, j) \notin \{ (k, l), (l, k)\} } } 
\mathbb{E} ( I_{i, j} I_{k, l}) 
\leq N^2 (N^2 -2) \frac{N^2 \cdot (N^2 -2)\cdot (N^2 - 4)! }{N^2!}
\xrightarrow[{N \rightarrow \infty}]{} 1 .
\end{align*}


For part (ii) fix $\varepsilon>0$ and observe that using  exchangeability of rows and subadditivity,  
we have that for any fixed
$ a \in [ N ] $
\begin{align*}
\mathbb{P} \big(
\sup_{ 1 \leq i \leq N }  
\big| \{ (j, k) \in  [ N ]^2  & :\ 
\sigma_N ( i, j) = \top \circ \sigma_N (j, k) \}  \big| > 
N^{\theta}  \cdot \varepsilon 
\big)\\
\leq & \sum_{ i =1}^N 
\mathbb{P} \big( 
\big| 
\{ (j, k) \in [ N ]^2:\ 
\sigma_N ( i, j) = \top \circ \sigma_N (j, k) \}
\big| > N^{\theta}  \cdot \varepsilon 
\big)\\
= & N \cdot \mathbb{P} \big( 
\big| 
\{ (j, k) \in [ N ]^2:\ 
\sigma_N ( a, j) = \top \circ \sigma_N (j, k) \}
\big| > N^{\theta}  \cdot \varepsilon 
\big).
\end{align*}

Denoting 
$ Y_N = \sum_{1 \leq i, j \leq N} F_{i, j} $
where each
$ F_{i, j} $ 
the random variable on
$ \mathcal{S}([ N ]^2 )$
given by
  $ F_{i, j} ( \sigma) = \left\{
\begin{array}{ll}
1 & \textrm{ if } \sigma (a, i)= \top \circ \sigma (i, j) \\
0 & \textrm{ if } \sigma (a, i)\neq \top \circ \sigma (i, j)
\end{array}
\right. $
and applying Markov's inequality we obtain
\begin{align*}
\mathbb{P} \big(
\sup_{ 1 \leq i \leq N } 
\big| \{ (j, k) \in [ N ]^2:\ 
\sigma_N ( i, j) = \top \circ \sigma_N (j, k) \}  \big| > 
N^{\theta}  \cdot \varepsilon 
\big)
\leq
N 
\frac{\mathbb{E} 
\big(  Y_N^K\big)} { ( N^{\theta} \varepsilon )^K},
\end{align*}
for any positive integer $ K $, in particular for $ K >\frac{3}{\theta}$.

Since
$ F_{i, j}^2 = F_{i, j} $, 
we have that 
$\mathbb{E} 
	\big(  Y_N^K\big) = \displaystyle \sum_{s=1}^K
	\mathbb{E} \big(
	\sum F_{i_1, j_1} \cdots F_{i_s, j_s} \big) $ 
	where the second sum goes over 
	$ \{ (i_1, j_1, \dots, i_s, j_s) \in [N]^{2s}:\ 
	(i_1, j_1), \dots, (i_s, j_s) \textrm{ distinct} \}.
$	
So Lemma \ref{lem:BC} implies that
it suffices to show the uniform  boundedness in $ N $ of the expression
$ \displaystyle
 \mathbb{E} \big(
  \sum_D F_{i_1, j_1} \cdots F_{i_s, j_s} \big) $
where 
$ s $
is some positive integer and 
\begin{align*}
 D = \{ (i_1, j_1, \dots, i_s, j_s)\in [ N ]^{2s} :\ (i_1, j_1), \dots, (i_s, j_s) \textrm{ distinct } \}.
\end{align*} 

 We write D as a disjoint union
  $D=D_1\sqcup D_2$, 
  where
  \begin{align*}
  D_1 & = \{ (i_1, j_1, \dots, i_s, j_s) \in D : \ 
  (i_k, j_k) =(a, a) \textrm{ for some } k\in [s]\} \\
  D_2 & =  \{ (i_1, j_1, \dots, i_s, j_s) \in D : \ 
  (i_k, j_k) \neq (a, a) \textrm{ for all } k\in [s]\}.
  \end{align*}
 \begin{itemize}
 	\item [$\bullet$] $(i_1,j_1,\ldots,i_s,j_s)\in D_1$ if there is $1\leq k\leq s$ such that $(i_k,j_k)= (a,a)$ (in particular, since the pairs of indices in $ D $ are distinct, there can be at most one such $k $).
 	\item[$\bullet$] $(i_1,j_1,\ldots,i_s,j_s)\in D_2$ if for all $1\leq k \leq s$ we have $(i_k,j_k)\neq(a,a)$.
 \end{itemize} 

Under the condition
$F_{i_1,j_1}\cdots F_{i_s,j_s}\neq 0$, 
for each
$(i_1,j_1,\ldots,i_s,j_s)\in D_2$
there are at most
 $\frac{N^2 ! }{( N^2 - s)!} $
possible choices for the 
$ s $-tuple
$(\sigma(a, i_1) \dots, \sigma(a, i_s)) $
 each of them determining at most one possible choice for the $s$-tuple  
 $(\sigma(i_1, j_1) \dots, \sigma( i_s, j_s)) $
 and 
 $ (N^2 - 2s)! $ 
 possible choices for the rest of values of
 $\sigma $.
 Furthermore, for each 
 $(i_1,j_1,\ldots,i_s,j_s)\in D_1$
 there are at most $N$ choices for the value of $\sigma(a,a)$ and at most
$\frac{(N^2-1)!}{(N^2-s)!}$
for the values of all other 
$ \sigma(a, i_k) $ 
such that 
$F_{i_1,j_1}\cdots F_{i_s,j_s}\neq 0$;
each such choice determines at most one possible choice for 
$(\sigma(i_1, j_1) \dots, \sigma( i_s, j_s)) $
and at most
$ (N^2 - 2s)! $ 
possible choices for the rest of values of
$\sigma $. 
Also, since $(i_k,j_k)=(a, a)$ for a unique 
$k \in [s]$,
 we have at most $N^{2s-1}$ choices for the $2s$-tuple $(i_1,j_1,\ldots,i_s,j_s)\in D_2$.

   Therefore
   \begin{align*}
 \mathbb{E} \big(
 \sum_D F_{i_1, j_1} \cdots F_{i_s, j_s} \big) \leq &
N^{2s} \cdot  \frac{1}{N^2!} \cdot \frac{N^2 ! }{( N^2 - s)!}
 \cdot 1 
 \cdot (N^2 - 2s)!
 \\
 +& N^{2s-1}\cdot \frac{1}{N^2!} \cdot \frac{N (N^2-1)!}{(N^2-s)!}\cdot 1\cdot (N^2-2s)!
 \xrightarrow[ N \rightarrow \infty ]{} 1,
   \end{align*}
   where $N^{2s}$ comes from estimating the number of terms in the sum,
   hence the conclusion.
The argument of the case $\sigma_N(i.j)=\sigma_N\circ \top(j,k)$ is similar. 
\end{proof}

\begin{lemma} \label{lemma:1:2:3}
For each positive integer
	$ N $ 
let 
	$\phi_N $
be a fixed map from 
$ [ N ] \times [ N ] $
to 
$ [ N ] $. 
Suppose that each
$ \omega_{s, N} $, $ s \in \{1, 2, 3\} $
is either the transpose for each
$ N $, or the identity for each $N $
and denote
$ \sigma_{s, N} = \omega_{s, N} \circ \sigma_N \circ \omega_{s, N} $.	
With this notations, for any $ \theta > 0 $ the following relations hold true almost surely:
\begin{enumerate}
	\item[(i)] $ \displaystyle 
\lim_{N \rightarrow \infty}	
N^{-(1+ \theta)} \cdot 
\big| \{ (i, j, k, l) \in [ N ]^4 :\ \sigma_N (i, j) =
 \top \circ \sigma_{1, N} 
( k, l) \textrm { and } \\
{}\hspace{6.5cm} 
\sigma_{2, N}  (j, k) = \top \circ \sigma_{3, N} ( l, i)
\}\big| = 0.
	$
	\item [(ii)]
	$ \displaystyle \lim_{N\rightarrow\infty}
	N^{-(\frac{5}{2} + \theta)}\cdot 
	\big|\big\{ (i, j, a, b)\in [ N ]^4  : \  \sigma_{N}( a, \phi_N (\sigma_N (i, j))) = \top \circ \sigma_{1, N} (b, i) \big\} \big|=0$.
\end{enumerate}
\end{lemma}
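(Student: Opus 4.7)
The plan is to prove both parts by the same strategy used in Lemma \ref{lemma:01:1}: write each cardinality as a sum of indicator random variables on $\mathcal{S}([N]^2)$, apply Markov's inequality to a suitable moment, bound the resulting expectation by counting permutations subject to the defining constraints, and conclude almost-sure convergence via Borel--Cantelli.

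For part (i), let $J_{i,j,k,l}$ be the indicator that both equalities hold and set $Z_N = \sum_{(i,j,k,l)\in [N]^4} J_{i,j,k,l}$. Writing $u=(i,j)$, $v_1=\omega_1(k,l)$, $v_2=\omega_2(j,k)$, $v_3=\omega_3(l,i)$, the two equalities become $\sigma(u) = \top\omega_1 \sigma(v_1)$ and $\sigma(v_2) = \omega_2\top\omega_3\sigma(v_3)$, i.e.\ two ``one $\sigma$-value determines another'' relations at most four sites. In the generic case where $u,v_1,v_2,v_3$ are pairwise distinct, the number of permutations satisfying both relations is at most $N^2(N^2-2)(N^2-4)!$ (choose $\sigma(v_1)$, which fixes $\sigma(u)$; choose $\sigma(v_3)$, which fixes $\sigma(v_2)$; fill the remaining $N^2-4$ positions freely), giving $E(J_{i,j,k,l})=O(N^{-4})$. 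Collision subcases among $\{u,v_1,v_2,v_3\}$ or fixed-point obstructions such as $\top\omega_1 \alpha=\alpha$ produce only lower-order contributions: the probability may drop by additional factors of $N$, but the number of tuples with the corresponding collision drops at least as fast. Summing over the $O(N^4)$ tuples gives $E(Z_N)=O(1)$, and Markov's inequality yields $\mathbb{P}(N^{-2} Z_N>\varepsilon)\le C/(\varepsilon N^2)$, which is summable; Borel--Cantelli then gives the almost sure limit.

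For part (ii), let $F_{i,j,a,b}$ be the indicator of the event in question and $W_N=\sum F_{i,j,a,b}$. The new feature is that the site $z_1:=(a,\phi_N(\sigma(i,j)))$ depends on the random value $\sigma(i,j)$. I would handle this by partitioning according to $y:=\sigma(i,j)\in[N]^2$: for each $y$, $z_1$ is a deterministic function of $y$, and the defining equality reduces to an ordinary relation $\sigma(z_1)=\top\omega_1\sigma(\omega_1(b,i))$ linking the three sites $(i,j), z_1, \omega_1(b,i)$ with specified values $y$, $\top\omega_1\alpha$, $\alpha$. The count of such permutations is at most $N^2(N^2-1)(N^2-3)!$, so $E(F_{i,j,a,b})=O(N^{-2})$ and $E(W_N)=O(N^2)$. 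This first-moment estimate only yields $\mathbb{P}(N^{-3} W_N>\varepsilon)=O(N^{-1})$, which is not summable, so I would pass to the second moment. A parallel computation for pairs of tuples (conditioning on $y$ and $y'=\sigma(i',j')$ to pin down $z_1$ and $z_1'$, then imposing two relations on up to six sites) gives $E(F_{i,j,a,b}F_{i',j',a',b'})=O(N^{-4})$ in the generic case; the various collision subcases among the six sites $(i,j),(i',j'),\omega_1(b,i),\omega_1(b',i'),z_1,z_1'$ contribute strictly lower order (for instance, $(i,j)=(i',j')$ forces $b=b'$ by injectivity of $\sigma$ and yields only $O(N^3)$). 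Thus $E(W_N^2)=O(N^4)$ and $\mathbb{P}(N^{-3}W_N>\varepsilon)\le E(W_N^2)/(\varepsilon^2 N^6)=O(N^{-2})$, summable, as required.

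The main obstacle will be the bookkeeping in (ii): since $z_1$ moves with the value of $\sigma$ at $(i,j)$, the expectations cannot be computed by the direct ``fix some $\sigma$-values, count the rest'' recipe used in Lemma \ref{lemma:01:1}, and one must instead sum out the random input to $\phi_N$ before the counting step. One must also verify that the many collision patterns among the six constrained sites in the second-moment computation are all subleading, which is a routine but careful case analysis. No hypothesis on $\phi_N$ beyond its being a function is needed, since $\phi_N$ enters only through the deterministic map $y\mapsto (a,\phi_N(y))$.
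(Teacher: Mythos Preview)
Your proposal is correct and follows essentially the same approach as the paper: first moment plus Borel--Cantelli for part (i), and second moment plus Borel--Cantelli for part (ii), with the composite site $(a,\phi_N(\sigma(i,j)))$ handled by first fixing $\sigma(i,j)$ (your ``conditioning on $y$'') before counting the remaining constrained values. The paper's write-up is simply more explicit about the collision subcases in the second-moment computation (their sets $D_2,D_3,D_4$), whereas you summarize these as ``routine but careful case analysis''; your illustrative example that $(i,j)=(i',j')$ forces $b=b'$ actually requires $a=a'$ as well, but this is exactly the kind of detail the full case split resolves.
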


\begin{proof}
To simplify the writing,  $ N $ shall be omitted within this proof by writting 
$ \sigma $, $\sigma_s $, $\omega_S $ for $\sigma_N $,
$ \sigma_{s, N}$, $\omega_{s, N} $ respectively (where $1\leq s \leq 4 $ ).

 For part (i), note first that if 
 $ i = j $, 
 then there are at most 
 $ N $
  possible choices for 
  $ i $, 
  each determining at most one possible choice for 
  $ k, l $,
  hence
  \[ \big| \{ (i, j, k, l) \in [ N ]^4 :\  i=j, \sigma_N (i, j) =
  \top \circ \sigma_{1, N} 
  ( k, l),
  \sigma_{2, N}  (j, k) = \top \circ \sigma_{3, N} ( l, i)
  \}\big|=o(N).
  \]
 Similar relations hold true for   
 $ k= l$, for $ i= l $ and for  $ j = k $. 
 Furthermore, if 
 $ i = k $ 
 and 
 $ l = j $,
 then the equality
 $ \sigma (i, j) = \top \circ \sigma_{1}( k, l) $ 
 gives either
  $ i= j $
   if 
   $\omega_{1} = \id_N $
   or
   $ \sigma (i, j) = \top \circ \sigma (i, j) $
   if 
$\omega_{1} $  is the transpose. 
In the last case
 there are again at most 
$ N $ 
possible choices for 
$ (i, j)$
as it is in the preimage  of the diagonal under 
$ \sigma_N $.

Therefore it remains to prove the statement for
$ (i, j, k, l) \in D $,
where
\begin{align*}
D = \big\{  (i, j, k, l) \in [ N ]^4 :\  (i, j),\, \omega_{1} (k, l),\, \omega_{2} (j, k),\,
\omega_{3} (l, i)
\textrm{ are distinct} \big\}.
\end{align*}

  For each
 $ ( i, j, k, l) \in D $,
 consider a mapping
 $ I_{i, j, k,l} $
 on
 $ \mathcal{S}([N]^2) $
 given by  
 
 \begin{align*}
  I_{i, j, k, l} ( \sigma) =
 \begin{cases}
 1 & \mbox{ if  condition ($\ast$)  holds true } \\
 0 & \mbox{ otherwise, }
 \end{cases}
 \end{align*}
where condition ($\ast$) is that
$\sigma (i, j) =
\top \circ \sigma_{1} 
( k, l) $ 
 and
$
\sigma_{2}  (j, k) = \top \circ \sigma_{4} ( l, i) $.


From Markov's inequality, we have
 \[ \mathbb{P} (
 \sum_{ (i, j, k, l) \in D } I_{i,j, k, l}
  > \varepsilon N^{1+\theta} ) \leq \varepsilon^{-1} \cdot
 \mathbb{E} \big(
  \sum_{ (i, j, k, l) \in D } I_{i,j, k, l} 
   \big)/N^{1+\theta},\]
where the (not displayed) argument of $I_{i,j, k, l}$ is an uniformly random permutation on $\mathcal{S}([ N]^2 )$. From Lemma \ref{lem:BC} it sufficces to show that the expectation above is bounded.

For 
$ I_{i,j, k, l}( \sigma) \neq 0 $,
there are 
$ N^2 $ 
possible choices for 
$ \sigma (i, j)$,
each giving at most one possible choice for 
$ \sigma ( \omega_{1, N} (k, l)) $
and 
$ N^2 - 2 $
choices for 
$ \sigma ( \omega_{2}(j, k))$,
each of them determining at most one possible choice for
$ \sigma( \omega_{3}(l, i)) $;
finally, there are at most 
$ (N^2 -4)! $ 
possible choices  for the rest of the values of 
$ \sigma $. 
Therefore
\begin{align*}
\mathbb{E} \big( 
\sum_{ (i, j, k, l) \in D } I_{i,j, k, l} 
\big) < 
N^4 \cdot \frac{N^2 \cdot (N^2-2) \cdot (N^2 -4)!}{N^2!} 
\xrightarrow[ N \rightarrow \infty]{} 1,
\end{align*}
hence the conclusion.

For part (ii), let us denote by 
($\ast\ast$) the relation
$ \sigma (a, \phi_N ( \sigma (i, j))) = \top \circ \sigma_{1} (b, i)$.

Remark that it suffices to show the property for tuples 
$ (i, j, a, b) $ 
such that 
$ a, b, \notin \{ i, j\}$. 
If 
$ a \in \{i, j\} $, 
then there are 
$ N^2 $ 
possible choices for the triple 
$ (i,j, a)$,
each determining, via condition ($\ast\ast$), at most one possible value for 
$ b $. Similarly for 
$ b \in \{i, j\}$.

Let 
$ D_1 = \{ (i, j, a, b) \in [ N ]^4:\  a, b \notin \{i, j\}\}$.
As before, using the Borel-Cantelli Lemma, it suffices to show that for each
$ \varepsilon > 0 $,
\begin{align*}
\sum_{N =1}^\infty 
\mathbb{P} \big(
\big| \{ (i,j, a, b ) \in D_1 :\  \textrm{ ($\ast\ast$) holds true} 
\}
\big| > \varepsilon \cdot  N^{\frac{5}{2}+ \theta}
\big)< \infty,
\end{align*}
which holds true if 
\begin{align*}
N^{1+ 2 \theta} \cdot \mathbb{P} \big(
\big| \{ (i,j, a, b )\in D_1 :\  \textrm{ ($\ast\ast$) holds true}
\}
\big| > \varepsilon \cdot  N^{\frac{5}{2} + \theta }
\big)
\end{align*}
is uniformly bounded in 
$ N $. 

For each $(i, j, a, b) \in D_1 $
consider
 a mapping
 $ F_{i, j, a,b,} $
  on
$ \mathcal{S}([ N]^2 )$ 
given by
\begin{align*}
 F_{i, j, a, b} ( \sigma)
= 
\begin{cases}
\begin{array}{ll} 1& \textrm{ if ($\ast\ast$) holds true}\\
0 & \textrm{otherwise}.
\end{array}
\end{cases}
\end{align*}
Markov's inequality gives that 
\begin{align*}
\mathbb{P} \big(
\big| \{ (i,j, a, b )\in D_1 :\  \textrm{ ($\ast\ast$) holds true}
\}
\big| > \varepsilon \cdot  N^{\frac{5}{2}+\theta}
\big) 
\leq 
\frac{\mathbb{E} \big( \displaystyle ( \sum_{ (i, j, a, b)\in D_1 } 
	F_{i, j, a, b} )^2  \big) }{\varepsilon^2 N^{5 + 2 \theta}}
\end{align*}
where the implicit argument of $F_{i, j, a, b}$ is a uniformly random permutation on $ \mathcal{S}([ N]^2 )$. So by Lemma \ref{lem:BC} it suffices to show that 
\begin{align}\label{bdd:03}
N^{-4} \cdot \mathbb{E} \big( \displaystyle ( \sum_{ (i, j, a, b)\in D_1 } 
F_{i, j, a, b} )^2  \big) \textrm{ is uniformly bounded in
	$ N $. }
\end{align}

First, note that 
if 
$ b \neq  b^\prime$,
then
$ F_{i, j, a, b} \cdot F_{i, j, a, b^\prime}= 0$.
since condition ($\ast\ast$) implies that 
$ \top \circ \sigma_{1} (b, i)
 = \sigma ( a, \phi_N( \sigma(i, j))) 
 = \top \circ \sigma_{1}(b^\prime, i)$.
 Similarly, if
 $ a \neq a^\prime $,
 then
 $ F_{i, j, a, b} \cdot F_{i, j, a^\prime, b}= 0$.
 
 Next, if
 $ (i, j) \neq (i^\prime, j^\prime)$
 but
 $ (a, \phi_N ( \sigma_N (i, j)))
 = 
 (a^\prime, \phi_N ( \sigma (i^\prime, j^\prime)))$
 then
 ($\ast\ast$) gives that 
 $(b, i) = (b^\prime, i^\prime)$.
 
 Denote
 \begin{align*}
 D_2 &=
  \{ (i, j, a, b, a^\prime, b^\prime):\ 
 a\neq a^\prime, b \neq b^\prime \textrm{ and }
 (i, j, a, b), (i, j, a^\prime, b^\prime) \in D_1 \}\\
 D_3 & =
 \{ (i, j, j^\prime, a, b):\
  (i, j, a, b), (i, j^\prime, a, b) \in D_1 \}\\
  D_4 & = 
  \{ (i, j, i^\prime, j^\prime, a, b, a^\prime, b^\prime):\ (i, j) \neq (i^\prime, j^\prime), (a, b) \neq (a^\prime, b^\prime) \textrm{ and }\\
  & \hspace{7.3cm} 
  (i, j, a, b), (i^\prime, j^\prime, a^\prime, b^\prime)
  \in D_1 \}
 \end{align*}

Utilizing the observations above and that
$ F_{i, j, a, b} = F_{i, j, a, b}^2 $,
we have that
\begin{align*}
\mathbb{E} \big( 
 ( \sum_{ (i, j, a, b)\in D_1 } 
F_{i, j, a, b} )^2  \big)
= \mathbb{E} & \big( \sum_{ D_1 } F_{i, j, a, b} \big)
+ 
\mathbb{E}\big(
\sum_{D_2} F_{i,j, a, b} \cdot F_{i, j, a^\prime, b^\prime} 
\big)\\
& + 
\mathbb{E}\big(
\sum_{D_3} F_{i,j, a, b} \cdot F_{i, j^\prime, a, b} 
\big)
+ 
\mathbb{E}\big(
\sum_{D_4} F_{i,j, a, b} \cdot F_{i^\prime, j^\prime, a^\prime, b^\prime} 
\big).
\end{align*}

For 
$ F_{i,j, a, b} (\sigma_N)= 1 $,
there are at most 
$ N^2 $ 
possible choices for 
$ \sigma(i, j) $, 
each giving at most 
$ N^2 -1 $ 
possible choices for
$ \sigma ( a, \phi_N ( \sigma(i, j)))$,
each giving at most one possible choice for
$ \sigma ( \omega_{1}(b, i)) $
and at most
$ (N^2 -3)! $
possible choices for the rest of values of
 $ \sigma $.
Therefore
\begin{align}\label{sum:1}
N^{-4}  \cdot   \mathbb{E} \big( \sum_{ D_1 } F_{i, j, a, b} \big)\leq N^{-4} \cdot N^4\cdot \frac{ N^2 \cdot (N^2 -1 ) \cdot 1 \cdot (N^2 -3)! }{N^2!}
\xrightarrow[ N \rightarrow\infty]{} 0.
\end{align}

For 
$ (i, j, a, b, a^\prime, b^\prime) \in D_2 $
and
$ F_{i,j, a, b} \cdot F_{i, j, a^\prime, b^\prime} ( \sigma) = 1 $,
there are 
$ N^2 $ possible choices for 
$ \sigma(i, j) $,
each giving less than 
$ N^4 $ 
possible choices  for
$ \sigma ( a, \phi_N ( \sigma(i, j)))$,
$ \sigma ( a^\prime, \phi_N ( \sigma(i, j)))$
each giving at most one choice for 
$ \sigma ( \omega_{1}(b, i)) $,
$ \sigma ( \omega_{1}(b^\prime, i)) $
and
$(N^2 - 5 )! $
possible choices for the rest of values of 
$ \sigma $. 
Therefore
\begin{align}\label{sum:2}
N^{-4} \cdot
 \mathbb{E}\big(
 \sum_{D_2} F_{i,j, a, b} \cdot F_{i, j, a^\prime, b^\prime} 
 \big)<
  N^{-4} \cdot N^6 \cdot \frac{N^2 \cdot N^4 \cdot (N^2 -5)!}{N^2!}
 \xrightarrow[ N \rightarrow\infty]{}0.
\end{align}

For 
$(i, j, j^\prime, a, b) \in D_3 $
and
$  F_{i,j, a, b} \cdot F_{i, j^\prime, a, b} = 1 $,
there are
$ N^2 $ 
possible choices for 
$ \sigma(i, j) $, 
each giving less than
$ N^2 $ 
possible choices for
$ \sigma ( a, \phi_N ( \sigma(i, j)))$,
each giving at most one possible choice for
$ \sigma ( \omega_{1}(b, i)) $
and at most
$ (N^2 -3)! $
possible choices for the rest of values of
$ \sigma $. 
Therefore
\begin{align}\label{sum:3}
N^{-4} \cdot \mathbb{E}\big(
\sum_{D_3} F_{i,j, a, b} \cdot F_{i, j^\prime, a, b} 
\big) 
< 
N^{-4} \cdot N^5 \cdot \frac{N^4 \cdot (N^2 -3)!}{N^2!}
\xrightarrow[N \rightarrow\infty]{}0.
\end{align}

For 
$(i, j, i^\prime, j^\prime, a, b, a^\prime, b^\prime)
\in D_4 $
together with
 $F_{i,j, a, b} \cdot F_{i^\prime, j^\prime, a^\prime, b^\prime} = 1 $
 there are at most 
 $ N^2 $ possible choices for 
 $ \sigma(i, j) $,
 each giving less than
 $ N^2 $ possible choices for 
$ \sigma ( a, \phi_N ( \sigma(i, j)))$,
each giving one possible choice for 
$ \sigma( \omega_{1} (b, i)) $, 
then less than
$N^4 $ 
possible choices for 
$ \sigma(i^\prime, j^\prime)$,
$ \sigma ( a^\prime, \phi_N ( \sigma(i^\prime, j^\prime)))$, 
each with at most one choice for 
 $ \sigma( \omega_{1} (b^\prime, i^\prime)) $
 and at most
 $(N^2- 6)! $
 possible choices for the rest of values of 
 $ \sigma$. Therefore
 \begin{align}\label{sum:4}
 N^{-4} \cdot 
 \mathbb{E}\big(
 \sum_{D_4} F_{i,j, a, b} \cdot F_{i^\prime, j^\prime, a^\prime, b^\prime} 
 \big) < 
 N^{-4} \cdot N^8 \cdot \frac{N^8 \cdot (N^2-6)!}{N^2!}
 \xrightarrow[ N \rightarrow\infty] {}1.
 \end{align}
 
 The conclusion follows summing relations (\ref{sum:1}) - (\ref{sum:4}).
 
	\end{proof}

\begin{proof}[Proof of Theorem \ref{thm:3:1}]
${}$	

According to Lemma \ref{lemma:01}(i), for any subset $ S $ of $ [m]$, we have that 
$ \mathfrak{a}_{\pi, \overrightarrow{\sigma_N}}(S) \geq \mathfrak{a}_{\pi, \overrightarrow{\sigma_N}} $.
Therefore it suffices to show that there is some 
$ S \subseteq [ m ] $ and some $ \varepsilon >0$ 
such that
$ \mathfrak{a}_{\pi, \overrightarrow{\sigma_N}} (S ) < -1-\varepsilon $.

Next, we observe that if we remove from $\pi$ any block of $\pi$ of the form
$(k, k+1)$
such that
$\varepsilon(k) \neq \varepsilon(k+1)$ it does not change the value of 
$\mathcal{V}_{ \overrightarrow{\sigma_N}} ( \pi)$.
Indeed, if 
$ \pi(k) = k + 1 $
and
$ \varepsilon(k) \neq \varepsilon (k+1) $, 
that is 
$ \sigma_{k+ 1, N } = \top \circ  \sigma_{k, N }  \circ \top $, 
then the relation
$  \delta_{ \sigma_{k, N} ( i_k, i_{ k + 1})}^{ \top\circ \sigma_{k+1, N} (i_{k +1}, i_{ k+2})} = 1 $
is equivalent to
$ i_{k} = i_{ k+2} $.
Hence
\begin{align}\label{v1-1}
\mathcal{V}_{ \overrightarrow{\sigma_N}} ( \pi) = &
\frac{1}{N} \sum_{ \substack{1 \leq i_{ \nu} \leq N\\ 1 \leq \nu \leq m }} 
\frac{1}{N} 
\delta_{ \sigma_{k, N} ( i_k, i_{ k + 1})}^{ \top\circ \sigma_{k+1, N} (i_{k +1}, i_{ k+2})}
\prod_{ \substack{ (s, t) \in \pi\\ s \neq k \neq t } }
E \big(
 [ G_N^{ \sigma_{k, N}}]_{ i_s, i_{s+1}} \cdot
[ G_N^{ \sigma_{l, N}} ]_{ i_t, i_{t+1}} 
\big)\nonumber\\
=& \sum_{ i_{k+1} = 1}^N  \frac{1}{N} \mathcal{V}_{ \overrightarrow{\sigma_N}^\prime} (\pi^\prime)
 = 
 \mathcal{V}_{ \overrightarrow{\sigma_N}^\prime} (\pi^\prime)
\end{align}
where
$ \pi^\prime$
and
$ \overrightarrow{\sigma_N}^\prime $
are obtained by removing
$ (k, k+1) $
from $\pi$ and removing
$ \sigma_{k, N} $ and $ \sigma_{k + 1, N } $
from
$ \overrightarrow{\sigma_N } $.

Without loss of generality from now on we assume that $\pi$ does not contain a block of the form $(k, k+1)$
such that
$\varepsilon(k) \neq \varepsilon(k+1)$. Observe that non-crossing pairings which at the same time are alternating in $1$ and $*$  always contain such a pair. Next we shall show that for any other pairing (either crossing or non-crossing but non-alternating) we have that  $\mathcal{V}_{ \overrightarrow{\sigma_N}} ( \pi)$ vanishes asymptotically.

Suppose that $ \pi $ has a block of the form 
$ (k, k+1)$ 
with
$ \varepsilon(k) = \varepsilon (k+1) $.
Via a circular permutation of the set 
$ [ m ] $,
we can then assume that 
$ \pi (1) = 2 $. 
If 
$ m = 2 $, 
then the result follows from Lemma \ref{lemma:01:1}(i). If 
$m > 2 $, 
then from our first assumption, the restriction of 
$\pi $
to 
$ \{ 3, 4, \dots, m \} $
either has a crossing or a block of the form 
$ (s, s+1 ) $.


For the case
 $ \pi(1) = 2 $
and
   $ \pi(s) = s+1 $
for some 
 $ s > 2 $ 
 (see Figure 1 below),
 Lemma \ref{lemma:01:1}(ii) gives that for any $ \theta > 0 $, almost surely
 $ |\mathcal{A}_{ \pi, \overrightarrow{\sigma_N}}( \{1, 2\}))| = o(N^{\theta}) $,
 therefore
 $ \mathfrak{a}_{ \pi, \overrightarrow{\sigma_N}}( \{1, 2\})) <- 1 + \theta $.
 Hence, taking
 $ B = \{ 1, \dots, s-1\} $,
  part (ii) of Lemma \ref{lemma:01} gives that 
 $ \mathfrak{a}_{ \pi, \overrightarrow{\sigma_N}}( B)  < -1 + \theta $.

 \setlength{\unitlength}{.13cm}
 \begin{equation*}
 	\begin{picture}(20,13)
 		\put(-20.5,0){1}
 		
 		\put(-20,3){\circle*{2}} 
 		
 		\put(-15.5,0 ){2}
 		
 		\put(-15,3){\circle*{2}}
 		
 		\multiput(-11,3)(2, 0){5}{\circle*{.5}}

 		\put(-1,0 ){$s$}
 		
 		\put(0,3){\circle*{2}}
 		
 		
 		\put(4,0 ){$s+1$}
 		
 		\put(5,3){\circle*{2}}
 		
 		
 		\put(5,4){\line(0,1){4}}

 		\put(0, 3){\line(0,1){5}}
 		
 		\put(0, 8){\line(1,0){5}}

 		
 		\multiput(8,3)(2, 0){5}{\circle*{.5}}

 		
 		\linethickness{.55mm}

 		
 		\put(-20,4){\line(0,1){4}}

 		\put(-15, 3){\line(0,1){5}}
 		
 		\put(-15, 8){\line(-1,0){5}}

 	\end{picture}
 \end{equation*}
 \textbf{Figure 1}.

On the other hand, given
$ (i_\nu, j_\nu)_{ \nu \in B } \in
 \mathcal{A}_{ \pi, \overrightarrow{\sigma_N}} (B) $, 
 the component 
 $ i_s = j_{s-1}$ 
 is fixed, so according to Lemma \ref{lemma:01:1} (ii) almost surely there are 
 $ o(N^{\theta}) $
 	triples 
$( i_{s}, i_{s+1}, j_{s+1})$
such that 
$ \sigma_{ s, N}(i_s, j_s) = \top \circ \sigma_{s+1, N}(i_{s+1}, j_{s+1}) $,
that is
 \[ \mathcal{A}_{\pi, \overrightarrow{\sigma_N}} (B \cup \{ s, s+1\}) =
 o( N^{\theta} ) \cdot
  \mathcal{A}_{ \pi, \overrightarrow{\sigma_N}} (B).
  \]	
We get then 
$ \mathfrak{a}_{\pi, \overrightarrow{\sigma_N}} (B \cup \{ s , s+1\}) < -2+ 2\theta $
and the conclusion follows from Lemma \ref{lemma:01}(i).

The case 
$ \pi(1) = 2 $ 
and the set 
$ \{3, 4, \dots, m \} $ 
contains a crossing of 
$\pi $
is similar. Suppose that $ a, b, c, d $ is such a crossing
see Figure 2 below).

\setlength{\unitlength}{.13cm}
\begin{equation*}
    \begin{picture}(20,13)
\put(-20.5,0){1}

\put(-20,3){\circle*{2}} 

\put(-15.5,0 ){2}

\put(-15,3){\circle*{2}}

\multiput(-11,3)(2, 0){5}{\circle*{.5}}

\put(-1,0 ){$a$}

\put(0,3){\circle*{2}}

\multiput(3,3)(2, 0){3}{\circle*{.5}}


\put(10,0 ){$b$}

\put(10,3){\circle*{2}}

\multiput(13,3)(2, 0){3}{\circle*{.5}}


\put(20,0 ){$c$}

\put(20,3){\circle*{2}}

\multiput(23,3)(2, 0){3}{\circle*{.5}}


\put(30,0 ){$d$}

\put(30,3){\circle*{2}}

\multiput(33,3)(2, 0){5}{\circle*{.5}}


\put(20,4){\line(0,1){4}}

\put(0, 3){\line(0,1){5}}

\put(0, 8){\line(1,0){20}}


\put(30,4){\line(0,1){7}}

\put(10, 3){\line(0,1){8}}

\put(10, 11){\line(1,0){20}}




\linethickness{.55mm}


\put(-20,4){\line(0,1){4}}

\put(-15, 3){\line(0,1){5}}

\put(-15, 8){\line(-1,0){5}}
\end{picture}
\end{equation*}
\nonumber
\textbf{Figure 2}

As above, Lemma \ref{lemma:01:1}(ii) and Lemma \ref{lemma:01}(i) give that 
$ \mathfrak{a}_{\pi, \overrightarrow{\sigma_N}} ( [ b-1])
< -1 + \theta $. 
Since 
$ \pi(a) = c $,
Lemma \ref{lemma:01}(ii) gives that 
 $ \mathfrak{a}_{\pi, \overrightarrow{\sigma_N}} ( [ b-1]  \cup \{ c \})
< - 1 + \theta $,
and furthermore, utilizing again Lemma \ref{lemma:01}(i),
we get 
\[
 \mathfrak{a}_{\pi, \overrightarrow{\sigma_N}} ( [ c]  \setminus \{ b \})
< -1 + \theta.
\]
 Finally, applying part (iii) of Lemma \ref{lemma:01}, we get that  
$ \mathfrak{a}_{\pi, \overrightarrow{\sigma_N}} ( [ c] )
<  -2 + \theta $,
hence the conclusion.

Next we shall analyse the case  when
 $ \pi $ 
 does not have any block with consecutive elements and hence it has a crossing.
Let
 $ a <b < c< d$ 
be such that
 $ \pi(a) = c$,
  $ \pi(b) = d $
and 
  $ c -b $ 
is minimal. Since 
$ \pi $
does not have any block with consecutive elements, the minimality of 
$c -b $
give that 
$ c = b+1 $. 
We can also suppose, via a circular permutation of the set
$ [ m ] $,
that 
$ a =1 $.

Furthermore, remark that if the set 
$ [ d ] $ 
is not invariant under
 $ \pi $, 
 then the conclusion of the theorem holds true, that is
 $ \mathcal{V}_{ \overrightarrow{\sigma_N} }( \pi)  = o(N^{-1
}) $.
To show it, suppose that there are some 
$ t, s $
 with
 $ t < d < s $ 
 and
 $ \pi(t) = s $.
 If 
 $ b+ 1 <  t <  d $,
 (see Figure 3 below)
  then  
  $ \mathfrak{a}_{ \pi, \overrightarrow{\sigma_n}} ( \{ 1, b+1\}) = 0$,
  so applying parts (i) and (iii) of Lemma \ref{lemma:01} 
  we get that 
  $\mathfrak{a}_{ \pi, \overrightarrow{\sigma_n}} ([ b+1]) \leq -1 $.

\setlength{\unitlength}{.13cm}
\begin{equation*}
	\begin{picture}(20,13)
		\put(-20.5,0){1}
		
		\put(-20,3){\circle*{2}} 
		
		
		
		\multiput(-17,3)(2, 0){5}{\circle*{.5}}

		\put(-7,0 ){$b$}
		
		\put(-6,3){\circle*{2}}
		
		\put(-3,0 ){$b+1$}
		
		\put(-1,3){\circle*{2}}
		
		\multiput(3,3)(2, 0){3}{\circle*{.5}}
		
		\put(10,0 ){$t$}
		
		\put(10,3){\circle*{2}}
		
		\multiput(13,3)(2, 0){3}{\circle*{.5}}
		
		
		\put(20,0 ){$d$}
		
		\put(20,3){\circle*{2}}
		
		\multiput(23,3)(2, 0){3}{\circle*{.5}}
		
		
		\put(30,0 ){$s$}
		
		\put(30,3){\circle*{2}}
		
		\multiput(33,3)(2, 0){5}{\circle*{.5}}

		
		\put(30,4){\line(0,1){7}}

		\put(10, 3){\line(0,1){8}}
		
		\put(10, 11){\line(1,0){20}}

		

		
		\linethickness{.55mm}
		
		
		\put(20,4){\line(0,1){4}}

		\put(-1, 3){\line(0,1){8}}
		
		\put(-6, 8){\line(1,0){26}}
		
		\put(-6, 3){\line(0,1){5}}
		
		\put(-20,3){\line(0,1){8}}
		
		\put(-20,11){\line(1,0){19}}
		
		

	\end{picture}
\end{equation*}
\textbf{Figure 3}. 
  
On the other hand, part (ii) of Lemma \ref{lemma:01} gives then that 
 $\mathfrak{a}_{ \pi, \overrightarrow{\sigma_n}} ([ b+1])
 \geq
\mathfrak{a}_{ \pi, \overrightarrow{\sigma_n}} ([ b+1] \cup \{ d \} ) $
and another application of parts (i) and (iii) give that
\[
\mathfrak{a}_{ \pi, \overrightarrow{\sigma_n}} ([ d ])
\leq 
\mathfrak{a}_{ \pi, \overrightarrow{\sigma_n}} ([ d ] \setminus \{ t \} ) - 1 
\leq 
\mathfrak{a}_{ \pi, \overrightarrow{\sigma_n}} ([ b+1] \cup \{ d \} ) - 1 \leq -2,
\]
hence the conclusion. The argument for the case 
$ 1 < t < b $ 
is similar.

 We can assume then that 
$ \pi ( [ d ]) = [d] $.
 Furthermore, this allows us to assume that 
$ d = m $:
 if 
$ d < m $, 
then the restriction of 
$ \pi $
to 
$ [m ]\setminus [ d ] $ 
has some crossing (since no block has consecutive elements), and again Lemma \ref{lemma:01} gives that
$ \mathfrak{a}_{ \pi \overrightarrow{\sigma_N}} \leq -2 $.
Finally, without loss of generality, we can also suppose that 
$ \varepsilon(1) =1 $,
that is 
$ \sigma_{1, N} = \sigma_N $.

If
$ m = 4 $,
then, putting
$\omega_{s, N} \in \{ \id_N, \top \} $
in part (i) of Lemma \ref{lemma:1:2:3}   we obtain that 
$ \mathcal{A}_{\pi, \overrightarrow{\sigma_N}} = o(N^{1+\theta})$,  
hence
\[
\mathfrak{a}_{ \pi, \overrightarrow{\sigma_N}} < 
 1+ \theta - | \{ 1, 2, 3, 4\} | + | \{ (1, 3), (2, 4)\} |  -1 = -2 + \theta.
\]

If 
$ m > 4 $, 
then applying part (ii) of Lemma \ref{lemma:1:2:3}
for $(i, j, a, b) = (i_1, i_b, i_{b+1}, i_m)$, 
$\omega_{s, N} \in \{ \id_N, \top \} $
and 
$ \phi_N $ one of the canonical projections,
   we obtain that 
\[
 \mathcal{A}_{ \pi, \overrightarrow{\sigma_N}}
  ( \{ 1, b, b+1, m \}) = o(N^{\frac{5}{2} +\theta}),\]
hence
\[
\mathfrak{a}_{ \pi, \overrightarrow{\sigma_N}}
( \{ 1, b, b+1, m \})
<
\frac{5}{2}+ \theta - 4 + 2 - 1 = -\frac{1}{2} + \theta.
\]

On the other hand, since
$ m > 4 $ 
and 
$ \pi $
does not have any blocks with consecutive elements, 
then either the sets 
$ \{ 2, 3 \dots, b-1  \} $ 
and
$ \{ b+2, b+3, \dots, m-1 \} $
are invariant under 
$ \pi $ 
and at least one of them is nonvoid,
or there are some 
$ t, s $ 
 with 
 $ \pi (t) = s $
 and
$ 1 < t < b $
and
$ b+1 < s < m $.

Let us suppose that the set
 $\{2, 3, \dots, b-1\}$
 is nonvoid and invariant under 
 $\pi $.
 Since 
 $\pi $ 
 does not have any blocks with consecutive elements, the restriction of $ \pi $
  to the set
 $\{2, 3, \dots, b-1\}$
 contains at least one crossing, 
 $ ( a^\prime, b^\prime, c^\prime, d^\prime) $ 
 (see Figure 4 below).
 

 \setlength{\unitlength}{.13cm}
 \begin{equation*}
 \begin{picture}(20,13)
 \put(-20.5,0){1}
 
 \put(-20,3){\circle*{2}} 
 
 
 
 \multiput(-17,3)(2, 0){3}{\circle*{.5}}

 \put(-11,0 ){$a^\prime$}
 
 \put(-10,3){\circle*{2}}
 
 \multiput(-7,3)(2, 0){3}{\circle*{.5}}
 
 
 \put(0,0 ){$b^\prime$}
 
 \put(0,3){\circle*{2}}
 
 \multiput(3,3)(2, 0){3}{\circle*{.5}}
 
 
 \put(10,0 ){$c^\prime$}
 
 \put(10,3){\circle*{2}}
 
 \multiput(13,3)(2, 0){3}{\circle*{.5}}
 
 
 \put(20,0 ){$d^\prime$}
 
 \put(20,3){\circle*{2}}
 
 \multiput(23,3)(2, 0){3}{\circle*{.5}}
 
 
 \put(10,4){\line(0,1){4}}

 \put(-10, 3){\line(0,1){5}}
 
 \put(-10, 8){\line(1,0){20}}

 
 \put(20,4){\line(0,1){7}}

 \put(0, 3){\line(0,1){8}}
 
 \put(0, 11){\line(1,0){20}}
 
 
 \put(29,0 ){$b$}
 
 \put(30,3){\circle*{2}}
 
 
 \put(34,0 ){$b+1$}
 
 \put(35,3){\circle*{2}}
 
 \multiput(38,3)(2, 0){5}{\circle*{.5}}
 
 
 \put(48,0 ){$m$}
 
 \put(48,3){\circle*{2}}
 
 
 

 
 \linethickness{.55mm}

 
 \put(-20,3){\line(0,1){10}}
 
 \put(35,3){\line(0,1){10}}
 
 \put(-20,13){\line(1,0){55}}
 
 
 \put(30,3){\line(0,1){7}}
 
 \put(48,3){\line(0,1){7}}
 
 \put(30,10){\line(1,0){18}}

 

 \end{picture}
 \end{equation*}
 \textbf{Figure 4}. 
 

 Applying parts (i), then (ii) of Lemma \ref{lemma:01}, we have that:
 \begin{align*}
 \mathfrak{a}_{ \pi, \overrightarrow{\sigma_N}}
 ( \{ 1, b, b+1, m\}) 
 \geq
 \mathfrak{a}_{ \pi, \overrightarrow{\sigma_N}}
 ( [ a^\prime] \cup \{ b, b+1, m \})
\geq
 \mathfrak{a}_{ \pi, \overrightarrow{\sigma_N}}
 ( [ a^\prime] \cup \{c^\prime, b, b+1, m \}).
 \end{align*}
 Denoting 
 $ B = [ c^\prime] \cup \{ b, b+1, m\} $ 
 and applying again Lemma \ref{lemma:01}(i), we have further obtain
 \begin{align*}
\mathfrak{a}_{ \pi, \overrightarrow{\sigma_N}} ( B \setminus \{ b^\prime\})
\leq 
 \mathfrak{a}_{ \pi, \overrightarrow{\sigma_N}}
( [ a^\prime] \cup \{c^\prime, b, b+1, m \}),
 \end{align*}
 and another application of Lemma \ref{lemma:01}(iii)
 gives that
 \begin{align*}
 \mathfrak{a}_{ \pi, \overrightarrow{\sigma_N}} ( B )
 \leq
 \mathfrak{a}_{ \pi, \overrightarrow{\sigma_N}} 
 ( B \setminus \{ b^\prime\}) -1 \leq 
 \mathfrak{a}_{ \pi, \overrightarrow{\sigma_N}}
 ( \{ 1, b, b+1, m\}) 
 -1 < -\frac{3}{2}+ \theta,
 \end{align*}
 hence the conclusion. The case when 
 $ \{ b+2, b+2, \dots, m-1\} $
 is nonvoid is similar.
 
 Finally, let us suppose that there are some 
 $ t, s $ 
 with 
 $ \pi (t) = s $
 and
 $ 1 < t < b $
 and
 $ b+1 < s < m $
 (see Figure 5 below).

 \setlength{\unitlength}{.13cm}
 \begin{equation*}
 \begin{picture}(20,13)
 \put(-20.5,0){1}
 
 \put(-20,3){\circle*{2}} 
 
 \multiput(-17,3)(2, 0){3}{\circle*{.5}}
 
 \put(-10,0 ){$t$}
 
 \put(-10,3){\circle*{2}}
 
 \multiput(-7,3)(2, 0){3}{\circle*{.5}}
 
 
 \put(-1,0 ){$b$}
 
 \put( 0,3){\circle*{2}}
 
 \put(3,0 ){$b+1$}
 
 \put( 5,3){\circle*{2}}
 
 \multiput(8,3)(2, 0){3}{\circle*{.5}}

 
 
 \put(15,0 ){$s$}
 
 \put(15,3){\circle*{2}}
 
 \multiput(18,3)(2, 0){3}{\circle*{.5}}
 
 
 \put(24,0 ){$m$}
 
 \put(25,3){\circle*{2}}
 
 
 \put(-10,4){\line(0,1){9}}

 \put(15, 3){\line(0,1){10}}
 
 \put(-10, 13){\line(1,0){25}}
 
 
 \linethickness{.55mm}
 
 
 \put(-20,4){\line(0,1){3}}
 
 \put( 5,3){\line(0,1){4}}
 
 \put(-20,7){\line(1,0){25}}
 
 
 \put( 0,3){\line(0,1){6}}
 
 \put(25,3){\line(0,1){6}}
 
 \put( 0,9){\line(1,0){25}}
 
 \end{picture}
 \end{equation*}
 \textbf{Figure 5}. 
 

 Applying part(i), then part (iii) of the Lemma \ref{lemma:01} we obtain that
 \begin{align*}
-\frac{1}{2} + \theta >
 \mathfrak{a}_{ \pi, \overrightarrow{\sigma_N}}
 ( \{ 1, b, b+1, m\}) 
 \geq
 \mathfrak{a}_{ \pi, \overrightarrow{\sigma_N}}
 ( [ b+1] \setminus \{ t \} \cup\{ m \} )
 \end{align*}
respectively that
\begin{align*} \mathfrak{a}_{ \pi, \overrightarrow{\sigma_N}}
( [ b+1] \setminus \{ t \} \cup\{ m \} )
\geq
\mathfrak{a}_{ \pi, \overrightarrow{\sigma_N}}
( [ b+1]  \cup\{ m \} ) + 1,
\end{align*}
 hence the conclusion.

\end{proof}


\section{Asymptotic Infinitesimal Freeness Relations}
 In \cite{mvp_gaussian}, it is shown that $G_N^{\sigma_N}$ and $G_N^{\tau_N}$ are asymptotically circular and asymptotically free whenever the pair of permutation sequences $(\sigma_N)_N$ and $(\tau_N)_N$ satisfies certain conditions. Similar properties for Haar matrices were studied in \cite{mpsz1} where we also allowed that the sequences of permutations are random. One of the consequences of \cite{mpsz1} (not stated explicitly anywhere) is that  conditions from \cite{mvp_gaussian} hold almost surely, which implies that for almost all pairs of permutation sequences $(\sigma_N)_N$ and $(\tau_N)_N$, we have $G_N^{\sigma_N}$ and $G_N^{\tau_N}$ are asymptotically circular and asymptotically free.

In this section, we show that the property also holds on the level of infinitesimal distributions. More precisely, we show the following theorem: 

\begin{thm}\label{main:2}
 Suppose that 
 $ G_N $ 
 is a Gaussian random matrix for each positive integer 
 $ N $.
 Moreover assume that sequences of uniformly random permutations 
 $ \big( \sigma_N \big)_N $  and $ \big( \tau_N \big)_N $ from  $ \mathcal{S}( [ N]^2) $  are independent from each other and independent from the sequence $(G_N)_N$.
 We have almost surely that 
  $ G_N^{\sigma_N} $ and $ G_N^{ \tau_N } $
  are asymptotically circular with zero infinitesimal distribution 
  and infinitesimally free from each other and from 
 $ G_N $.
\end{thm}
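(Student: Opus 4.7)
The plan is to compute joint $\ast$-moments of arbitrary words in $G_N$, $G_N^{\sigma_N}$, $G_N^{\tau_N}$ and their adjoints, and from them read off simultaneously the asymptotic distribution, the infinitesimal distribution, and the freeness/infinitesimal freeness relations. Using $G_N^\ast = G_N$ and $(G_N^\sigma)^\ast = G_N^{\top\circ\sigma\circ\top}$, every such word is of the form $G_N^{\sigma_{1,N}}\cdots G_N^{\sigma_{m,N}}$ with each $\sigma_{k,N} \in \{\mathrm{id},\,\sigma_N,\,\top\circ\sigma_N\circ\top,\,\tau_N,\,\top\circ\tau_N\circ\top\}$, and equation \eqref{eq:1} expresses its $\varphi_N$-moment as $\sum_{\pi\in P_2(m)}\mathcal V_{\overrightarrow{\sigma_N}}(\pi)$. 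I call $\pi\in P_2(m)$ \emph{admissible} if it is non-crossing and every block $(k,l)\in\pi$ connects two indices of the same ``type'' (both plain $G_N$, or both involving $\sigma_N$, or both involving $\tau_N$) with opposite $\ast$-status.

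The first step is a multi-permutation version of Theorem \ref{thm:3:1}: show that, almost surely, $\mathcal V_{\overrightarrow{\sigma_N}}(\pi) = o(N^{-1})$ whenever $\pi$ is not admissible. For this I need two-permutation analogues of Lemmas \ref{lemma:01:1} and \ref{lemma:1:2:3}, estimating the counts of tuples satisfying mixed relations like $\sigma_N(i,j) = \top\circ\tau_N(j,k)$; the same Borel--Cantelli plus Markov's inequality arguments go through, because under the independence of $\sigma_N$ and $\tau_N$ the probability of a mixed constraint is still of order $N^{-2}$. Armed with these lemmas, the case analysis in the proof of Theorem \ref{thm:3:1}---dispatching crossing pairings, non-crossing pairings containing an adjacent block that is not type-conjugate, and non-crossing pairings with no adjacent block---transfers intact, now splitting additionally on which underlying permutation appears in each factor.

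The second step is to show $\mathcal V_{\overrightarrow{\sigma_N}}(\pi) = 1$ \emph{exactly} for every admissible $\pi$. An admissible non-crossing pairing always has an innermost block $(k,k+1)$, for which admissibility forces $\sigma_{k+1,N} = \top\circ\sigma_{k,N}\circ\top$, so the identity \eqref{v1-1} from the proof of Theorem \ref{thm:3:1} peels the block off exactly (no error term); induction on $m$ closes the argument. Combining both steps yields, almost surely,
\[
\varphi_N\bigl(G_N^{\sigma_{1,N}}\cdots G_N^{\sigma_{m,N}}\bigr) \;=\; \bigl|\{\pi\in P_2(m):\pi\text{ admissible}\}\bigr|\;+\;o(N^{-1}).
\]
The leading term is precisely the joint $\ast$-moment of a semicircular element and two circulars that are mutually $\ast$-free, which gives the asymptotic freeness of $\{G_N\},\{G_N^{\sigma_N}\},\{G_N^{\tau_N}\}$ and the asymptotic circularity of $G_N^{\sigma_N}$ and $G_N^{\tau_N}$.

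Multiplying the $o(N^{-1})$ remainder by $N$ gives $\varphi'\equiv 0$ on every monomial, hence on the entire algebra generated by $G_N, G_N^{\sigma_N}, G_N^{\tau_N}$ and their adjoints. This simultaneously yields zero asymptotic infinitesimal distributions for $G_N^{\sigma_N}$ and $G_N^{\tau_N}$ and collapses the defining equation of infinitesimal freeness to the trivial identity $0=0$ on every alternating centered product, so by Theorem \ref{vanish inf prop} (or directly from the definition) the three algebras are asymptotically infinitesimally free. The principal obstacle is the first step: pushing the Borel--Cantelli/Markov machinery of Section \ref{Section:3} through the expanded case analysis forced by the presence of two independent permutations. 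The underlying mechanisms are unchanged, but the bookkeeping grows in proportion to the number of matrix types that can label each factor, and one must be careful that the crucial ``dimension drop'' estimates of Lemma \ref{lemma:1:2:3} survive in each new case.
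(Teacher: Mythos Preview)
Your proposal is correct and follows essentially the same route as the paper: prove the multi-permutation analogues of Lemmas~\ref{lemma:01:1} and~\ref{lemma:1:2:3} (the paper does this as Lemmas~\ref{lemma:02:1}--\ref{lemma:02:4}), then rerun the case analysis of Theorem~\ref{thm:3:1} with an additional split according to which of $\{\mathrm{id}_N\}$, $\{\sigma_N,\top\circ\sigma_N\circ\top\}$, $\{\tau_N,\top\circ\tau_N\circ\top\}$ each factor belongs to, exactly as you anticipate. The paper organises this split into three subcases (one, two, or three of the four critical factors coming from the $\sigma$-family), which is precisely the ``bookkeeping growth'' you flag as the principal obstacle.
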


\begin{remark}
 Let us clarify what we mean by the ''almost sure'' statement above. We consider asymptotic freeness with respect to sequence of functionals
  $(1/N) \mathbb{E}\circ \Tr$ 
  i.e. we take the expectation of the normalized trace. We will study as in the previous section the quantity $ \mathcal{V}_{ \overrightarrow{\sigma_N}} ( \pi)$ defined as in equation \eqref{eq:1}, which becomes a random variable when we let $\overrightarrow{\sigma_N}$ be a sequence of random permutations. Hence the almost sure limit here refers to the limit with probability one with respect to an uniformly random permutation.
\end{remark}

The proof of the theorem above is very similar to the argument from the previous section. More precisely, in equation (\ref{eq:1}) we consider that each 
$ \sigma_{ k, N } $ 
is one of the following permutations:
$ \id_N,  \sigma_N, \top \circ\sigma_N\circ \top , \tau_N, \top \circ \tau_N \circ \top $
and prove a results similar to the ones from Section \ref{Section:3}.

\begin{lemma}\label{lemma:02:1}
Suppose that 
$ \big( \omega_N \big)_N $
is a  given sequence of permutations with 
$ \omega_N \in \mathcal{S}( [ N ] ^2 ) $ 
for each 
$ N $. 
If $\theta > 0 $ is a constant, then 
for almost all
$ ( \sigma_N)_N $
we have that:
\begin{enumerate}
\item[(i)]
$  \displaystyle \lim_{ N \rightarrow \infty }
N^{-(\frac{1}{2} + \theta)} \cdot 	| \{ (i, j) \in [ N ]^2 : \ \omega_N( i, j) = \sigma_N (j, i)  \}  | 
= 0 $
\item[(ii)] 
$ \displaystyle 
\lim_{N \rightarrow \infty}
N^{-\theta} \cdot
\sup_{ 1 \leq i \leq N } \big|  (j, k) \in [ N ]^2:\ 
\omega_N ( i, j) =  \sigma_N (j, k) \}  \big| = 0.
$
\end{enumerate}
\end{lemma}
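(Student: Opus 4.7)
The plan is to mimic the proof of Lemma \ref{lemma:01:1} closely, with the simplification that one of the permutations in each displayed equality is now deterministic. In both parts, I would combine the Borel--Cantelli lemma with Markov's inequality to reduce the statement to a uniform moment bound on a suitable counting random variable.

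For part (i), set $I_{i,j}(\sigma)=1$ if $\omega_N(i,j)=\sigma(j,i)$ and $0$ otherwise, and let $X_N = \sum_{i,j} I_{i,j}$; the claim becomes $N^{-1}X_N \to 0$ almost surely. From $\mathbb{P}(N^{-1}X_N > \varepsilon) \leq \mathbb{E}(X_N^2)/(\varepsilon N)^2$, it suffices to prove $\mathbb{E}(X_N^2) = O(1)$. Because $\omega_N$ is fixed and bijective, any two distinct pairs $(i,j)\neq(k,l)$ satisfy both $\omega_N(i,j)\neq\omega_N(k,l)$ and $(j,i)\neq(l,k)$, so each simultaneous occurrence $I_{i,j}(\sigma)=I_{k,l}(\sigma)=1$ requires $\sigma$ to send two distinct points to two distinct prescribed values, which happens with probability $(N^2-2)!/N^2!$; summing gives $\mathbb{E}(X_N^2)\leq 1 + N^4/(N^2(N^2-1))$, uniformly bounded.

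For part (ii), fix $i \in [N]$ and set $Z_i = \sum_{j,k} F^{(i)}_{j,k}$, where $F^{(i)}_{j,k}(\sigma)=1$ if $\omega_N(i,j)=\sigma(j,k)$ and $0$ otherwise. The union bound over $i$ and Markov's inequality applied to the sixth moment give
\[
\mathbb{P}\Big(\sup_i Z_i > N^{1/2}\varepsilon\Big) \leq N\cdot \max_i \frac{\mathbb{E}(Z_i^6)}{(\sqrt{N}\varepsilon)^6} = \frac{\max_i \mathbb{E}(Z_i^6)}{N^2\varepsilon^6},
\]
which is summable in $N$ as soon as $\mathbb{E}(Z_i^6)$ is uniformly bounded in both $N$ and $i$. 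To prove this last fact, I invoke the injectivity of $\omega_N$: for fixed $i$, the values $\omega_N(i,j)$ are distinct in $j$, so a product $\prod_{s=1}^6 F^{(i)}_{j_s,k_s}(\sigma)$ can be non-zero only when the map $s\mapsto(j_s,k_s)$ is consistent, meaning that $j_s=j_{s'}$ forces $k_s=k_{s'}$; otherwise $\sigma$ would be forced to fail injectivity. Grouping $6$-tuples by the number $d$ of distinct pairs $(j_s,k_s)$ appearing yields
\[
\mathbb{E}(Z_i^6) \leq \sum_{d=1}^{6} d^6 \cdot N^{2d} \cdot \frac{(N^2-d)!}{N^2!} = O(1),
\]
uniformly in $N$ and $i$, since each summand is controlled by $d^6 \cdot (N^2/(N^2-d+1))^d \leq d^6 \cdot 2^d$ for $N$ large.

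The main structural difference with Lemma \ref{lemma:01:1}(ii) is the loss of row-exchangeability of $\sigma_N$, since $\omega_N$ is a fixed object carrying no symmetry among its rows; however this is not an obstacle, because the moment bound above is already uniform in $i$ and the crude union bound suffices. I do not expect any serious difficulty beyond careful bookkeeping of the coincidence patterns between the pairs $(j_s,k_s)$ in the moment calculation of part (ii).
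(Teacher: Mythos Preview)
Your proposal is correct and follows essentially the same Borel--Cantelli plus Markov moment-bound strategy as the paper's own proof, with the same second-moment computation in (i) and sixth-moment computation in (ii). The only noteworthy difference is that you explicitly address the loss of row-exchangeability in part (ii) and observe that the moment bound is uniform in $i$, a point the paper handles implicitly by invoking the proof of Lemma~\ref{lemma:01:1}(ii) even though the uniformity-in-$a$ of that argument is what is actually needed here.
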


\begin{proof}
 For part (i), given 
 $ i, j \in [ N ] $,
 denote by 	
$ I_{ i, j} $
the random variable on 
$ \mathcal{S} ( [N]^2)	$
given by
$ I_{i, j} ( \sigma) = \left\{
\begin{array}{ll}
1 & \textrm{ if } \sigma (i, j)= \omega_N (j, i) \\
0 & \textrm{ if } \sigma (i, j)\neq \omega_N (j, i).
\end{array}
\right. $

As in the proof of Lemma \ref{lemma:01:1}(i), it suffices to show that the sums 
$ \displaystyle \sum_{i, j=1}^N \mathbb{E}(I_{i, j}) $
and
$ \displaystyle \sum_{\substack{ 1 \leq i, j, k, l \leq N \\(i, j) \notin \{ (k, l), (l, k)\} } } 
\mathbb{E} ( I_{i, j} I_{k, l}) $
are bounded in
 $ N $.
 
 For 
 $ I_{i, j} ( \sigma) \neq 0 $
 there is only one possible choice for 
 $ \sigma_N(j, i) $ 
 and 
 $ ( N^2 -1) ! $ 
 possiblilities for the rest of the values of 
 $ \sigma $, 
 therefore
\begin{align*}
\sum_{i, j=1}^N \mathbb{E}(I_{i, j}) \leq N^2 \frac{1 \cdot (N^2 -1)! }{N^2!}  = 1.
\end{align*}

If
$(i,j) \notin \{ (k, l), (l, k) \} $,
for 
$ I_{i, j} (\sigma) I_{k, l}(\sigma) \neq 0 $
there is only one possible choice for
$ \sigma (j, i) $ 
and for 
$ \sigma ( l, k) $
and 
$ ( N^2 -2)! $
possible choices for the rest of the values of 
$ \sigma $,
therefore
\begin{align*}
\sum_{\substack{ 1 \leq i, j, k, l \leq N \\(i, j) \notin \{ (k, l), (l, k)\} } } 
\mathbb{E} ( I_{i, j} I_{k, l}) 
\leq N^2 (N^2 -2) \frac{1\cdot (N^2 - 2)! }{N^2!}
\xrightarrow[{N \rightarrow \infty}]{} 1 ,
\end{align*}
and the conclusion follows.

For part (ii), for a fixed 
$ a \in [ N ] $
 denote by 
 $ F_{i, j} $ the random variable on 
 $ \mathcal{S}([N]^2) $
 given by
 $ F_{i, j} ( \sigma) = 
  \left\{
 \begin{array}{ll}
 1 & \textrm{ if } \sigma (i, j)= \omega_N (a, i) \\
 0 & \textrm{ if } \sigma (i, j)\neq \omega_N (a, i)
 \end{array}
 \right.
 $
 and, as in the proof of Lemma \ref{lemma:01:1}(ii), it suffices to show the boundedness (as 
 $ N \rightarrow \infty $)
 of the expression
 $$
 \mathbb{E} \big(
 \sum_D F_{i_1, j_1} \cdots F_{i_s, j_s} \big) $$
 where 
 $ s $
 is some positive integer and again
 \begin{align*}
 D = \{ (i_1, j_1, \dots, i_s, j_s)\in [ N ]^{2s} :\ (i_1, j_1), \dots, (i_s, j_s) \textrm{ distinct } \}.
 \end{align*} 
 
If
 $ F_{i_t, j_t}  \neq 0 $
 for each 
 $ 1 \leq t \leq s $, 
 we need that the $ s$-tuple 
 $ ( \sigma(i_1, j_1), \dots, \sigma(i_s, j_s)) $
 coincides to the $ s $-tuple
 $ ( \omega_N (a, i_1), \dots, \omega_N (a, i_s) )$
 and there are at most 
 $ (N^2 -s)! $ 
 choices for the rest of the values of
  $ \sigma$.
   Therefore
   \begin{align*}
 \mathbb{E} \big(
 \sum_D F_{j_1, k_1} \cdots F_{i_s, j_s} \big)
 \leq 
 \frac{1}{N^2!} \cdot N^{2s} \cdot (N^2 -s)!
 \xrightarrow[{N \rightarrow \infty}]{}1 
   \end{align*}
   hence the conclusion.
\end{proof}
	
\begin{lemma}\label{lemma:02:2}
	Let $ \theta$ be a constant.
The following relations hold true almost surely
for sequences 
$ \big( \sigma_N \big)_N $ 
with each
$ \sigma_N $ 
an uniformly random permutation from $ \mathcal{S}([N]^2) $:
\begin{enumerate}
\item[(i)] if  $ (f_N)_N , (g_N)_N $ 
are two given sequences of maps, $ f_N, g_N : [ N]^2 \rightarrow [ N]^2 $, then
 \[ \displaystyle 
\lim_{N \rightarrow \infty }
 N^{-(1+ \theta)} \cdot
\big|
\{ (i, j):\ 
\sigma_N ( f_N(i, j)) = g_N (i, j)
\}\big| = 0 \]
\item[(ii)] if $ ( \phi_N)_N , ( \psi_N)_N $ 
are sequences of maps, $ \phi_N, \psi_N :[ N]^3 \rightarrow [N]^2$,
such that 
$(i, a, b) = (i^\prime, a^\prime, b^\prime)$
whenever
$ \phi_N (i, j, a)= \phi_N(i^\prime, j^\prime, a^\prime)$
and
$\psi_N(i, j, b) = \psi_N(i^\prime, j^\prime, b^\prime)$,
 then
\[\displaystyle \lim_{ n \rightarrow \infty}
N^{-(\frac{5}{2} + \theta)} \cdot \big|\{ (i, j, a, b) : \ 
 \sigma_N ( \phi_N(i, j, a)) = \psi_N (i, j, b)
\}
\big|
= 0.
\]
\end{enumerate}	
\end{lemma}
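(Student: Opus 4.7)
The plan is to follow the same template as Lemmas \ref{lemma:01:1} and \ref{lemma:1:2:3}: introduce indicator random variables on $\mathcal{S}([N]^2)$ counting the tuples of interest, estimate the expectation of the square of their sum, and invoke Markov's inequality together with the Borel--Cantelli lemma. Throughout I use that for a uniform $\sigma_N \in \mathcal{S}([N]^2)$, prescribing $\sigma_N(p) = q$ for fixed $p, q$ has probability $1/N^2$; prescribing $\sigma_N(p_1) = q_1$ and $\sigma_N(p_2) = q_2$ with $p_1 \neq p_2$, $q_1 \neq q_2$ has probability $1/(N^2(N^2-1))$; and exactly one of the two underlying equalities holding makes the probability vanish.

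For part (i), set $I_{i,j}(\sigma) = \mathbf{1}\{\sigma(f_N(i,j)) = g_N(i,j)\}$ and $X_N = \sum_{(i,j) \in [N]^2} I_{i,j}$, so $\mathbb{E}(X_N) = 1$. To estimate $\mathbb{E}(X_N^2) = \sum \mathbb{E}(I_{i,j} I_{k,l})$ I split the pairs according to the equality patterns of $f_N(i,j), f_N(k,l)$ and $g_N(i,j), g_N(k,l)$. Mixed patterns contribute $0$; the doubly-distinct pattern contributes at most $N^4 / (N^2(N^2-1)) = O(1)$; and the doubly-equal pattern is bounded using $\sum_{p,q} n_{p,q} = N^2$, where $n_{p,q} = |\{(i,j) : f_N(i,j) = p,\ g_N(i,j) = q\}|$, giving contribution at most $N^2 \cdot (1/N^2) \cdot \max n_{p,q} \leq N^2$. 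Hence $\mathbb{E}(X_N^2) = O(N^2)$ and Markov yields $\mathbb{P}(N^{-2} X_N > \varepsilon) = O(N^{-2})$, which is summable, so Borel--Cantelli gives the claim.

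For part (ii), set $I_{i,j,a,b}(\sigma) = \mathbf{1}\{\sigma(\phi_N(i,j,a)) = \psi_N(i,j,b)\}$ and $X_N = \sum I_{i,j,a,b}$, so $\mathbb{E}(X_N) = N^2$. The heart of the argument is bounding $\mathbb{E}(X_N^2)$ by splitting pairs of $4$-tuples $((i,j,a,b),(i',j',a',b'))$ according to whether $\phi_N(i,j,a) = \phi_N(i',j',a')$ and whether $\psi_N(i,j,b) = \psi_N(i',j',b')$. The two mixed patterns again contribute $0$; the doubly-distinct pattern admits at most $N^8$ pairs each with probability $O(N^{-4})$, giving $O(N^4)$; and in the doubly-coincident pattern the stated injectivity hypothesis forces $(i,a,b) = (i',a',b')$, so only the coordinates $j, j'$ remain free, bounding the count by $N^5$ and the contribution by $N^5 / N^2 = N^3$. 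Therefore $\mathbb{E}(X_N^2) = O(N^4)$, Markov gives $\mathbb{P}(N^{-3} X_N > \varepsilon) = O(N^{-2})$, and Borel--Cantelli concludes.

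The main obstacle is the doubly-coincident term in part (ii). Without the hypothesis on $(\phi_N, \psi_N)$, one could in principle have up to $N^6$ such pairs (for instance if $\phi_N$ and $\psi_N$ each ignored one of the coordinates), which would inflate $\mathbb{E}(X_N^2)$ to $O(N^6)$ and destroy summability. The hypothesis is tailored precisely so that the pair of images $(\phi_N(i,j,a), \psi_N(i,j,b))$ determines the triple $(i,a,b)$, leaving only the $j$-coordinate free and reducing the count to the crucial $N^5$ bound.
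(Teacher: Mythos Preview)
Your argument is correct and for part (ii) it is essentially identical to the paper's: both use the second-moment method, split pairs of $4$-tuples according to whether the $\phi$-values and $\psi$-values coincide, note that the mixed cases vanish, bound the doubly-distinct case by $N^8 \cdot O(N^{-4}) = O(N^4)$, and use the injectivity hypothesis to reduce the doubly-coincident case to $O(N^5)$ tuples contributing $O(N^3)$.

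The only difference worth noting is in part (i). You run a second-moment argument with a case split on the equality patterns of $f_N(i,j), f_N(k,l)$ and $g_N(i,j), g_N(k,l)$, obtaining $\mathbb{E}(X_N^2) = O(N^2)$ and hence $\mathbb{P}(N^{-2}X_N > \varepsilon) = O(N^{-2})$. The paper instead observes that the \emph{first} moment already suffices: $\mathbb{E}(X_N) \leq N^2 \cdot (N^2-1)!/N^2! = 1$, so Markov directly gives $\mathbb{P}(X_N > \varepsilon N^2) \leq (\varepsilon N^2)^{-1}$, which is summable. Your route works, but the extra case analysis (in particular the $n_{p,q}$ bookkeeping for the doubly-equal term) is unnecessary here; the point is that in (i) the target scale $N^{-2}$ matches $\mathbb{E}(X_N)/|[N]^2|$ exactly, so no variance control is needed, whereas in (ii) the target scale $N^{-3}$ is one power of $N$ below $\mathbb{E}(X_N)/N^4$, forcing the second-moment upgrade.
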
	

\begin{proof}
Considering the random variable
$ I_{i, j}( \sigma) =
\left\{ 
\begin{array}{ll}
1 & \textrm{ if }  \sigma ( f_N (i, j)) = g_N (i, j)\\
0 & \textrm { otherwise }
\end{array}
\right.$ 
and using Markov's inequality, we have that
\begin{align*}
\mathbb{P} \big( \big|\{ (i, j):\ 
\sigma_N ( ( f_N (i, j)) = g_N (i, j)
\}\big|
> \varepsilon \cdot N^{1+\theta} 
\big) 
< \frac{ \mathbb{E} \big(   \displaystyle \sum_{i, j \in [ N ] } I_{i, j}\big) }{ \varepsilon \cdot N^{1+\theta} }
\end{align*}
so by Lemma \ref{lem:BC} it suffices to show that 
$ \displaystyle \mathbb{E} \big( \sum_{i, j \in [ N ] } I_{i, j} \big) = O(N^0) $.

For 
$ I_{i, j}( \sigma) = 1 $,
there is at most one possible choice for 
$ \sigma( f_N (i, j)) $ 
and at most
$ ( N^2 -1 )! $
possible choices for the rest of the values of 
$ \sigma $,
hence
\begin{align*}
\mathbb{E} \big( \sum_{i, j \in [ N ] } I_{i, j} \big) 
\leq N^2 \cdot \frac{1 \cdot (N^2 -1 ) ! }{N^2!}
\xrightarrow[ N \rightarrow\infty] {} 1.
\end{align*}

Similarly, for part (ii) considering the random variable
\[
I_{i, j,a, b} (\sigma) = \left\{ 
\begin{array}{ll}
1 & \textrm{ if } \sigma (\phi_N (i, j, a)) = \psi_N (i, j, b)\\
0 & \textrm{ otherwise }
\end{array}
\right.
\]
and applying again Markov's inequality,  we have that
\begin{align*}
\mathbb{P} \big( 
\big| \{ (i, j, a, b) :\sigma_N ( \phi_N (i, j, a)) = \psi_N (i, j, b) \}\big| 
> 
\varepsilon \cdot N^{\frac{5}{2} + \theta} 
\big)< \frac{ \displaystyle \mathbb{E} \big(
(\sum_{i, j, a, b} I_{i, j, a, b} )^2\big)  }{\varepsilon^2 \cdot N^{5+ 2\theta}}
\end{align*}
so by Lemma \ref{lem:BC} it suffices to show that 
$  \displaystyle \mathbb{E} \big(
(\sum_{i, j, a, b} I_{i, j, a, b})^2\big) = O(N^{4})
$.

For 
$ I_{i,j,a, b} (\sigma) =1 $
there is only one possible choice for 
$ \sigma( \phi_N(i, j, a))$
and
$ (N^2 -1)!$ 
possible choices for the other values of 
$ \sigma$,
therefore
\begin{align*}
\frac{1}{N^4}\mathbb{E} \big( \sum_{i, j,a, b} I_{i, j, a, b}^2 \big)= 
\frac{1}{N^4}\mathbb{E} \big( \sum_{i, j,a, b} I_{i, j, a, b} \big)
\leq\frac{1}{N^4} \cdot  N^4 \cdot \frac{ (N^2-1)!}{N^2}
\xrightarrow[N \rightarrow \infty]{}0.
\end{align*}
Denote
$D =\{ (i, j,a, b, i^\prime, j^\prime, a^\prime, b^\prime):\ (i, a, b) \neq (i^\prime, a^\prime, b^\prime) \}$.
From the property of 
$\phi_N$
 and 
$\psi_N $,
  it follows that if 
$(i, j,a, b, i^\prime, j^\prime, a^\prime, b^\prime)\in D $
and
$ \phi_N(i, j, a) = \phi_N(i^\prime, j^\prime, a^\prime) $
then
$ I_{i, j, a, b}I_{i^\prime, j^\prime, a^\prime, b^\prime}
= 0 $.
Hence, if
$(i, j,a, b, i^\prime, j^\prime, a^\prime, b^\prime)\in D $
and the relation
$ I_{i, j, a, b}\cdot I_{i^\prime, j^\prime, a^\prime, b^\prime}
(\sigma)
\neq 0 $
holds true, then
there is one possible choice for
$ \sigma( \phi_N(i, j, a)) $ and
$ \sigma( \phi_N(i^\prime, j^\prime, a^\prime)) $
and
$ (N^2 -2)! $
possible choices for the rest of the values of 
$ \sigma $. 
Therefore
\begin{align*}
\frac{1}{N^4} \mathbb{E}\big( \sum_D I_{i,j, a, b}I_{i^\prime, j^\prime, a^\prime, b^\prime} \big) 
\leq \frac{1}{N^4} \cdot N^8 \cdot \frac{(N^2 -2)!}{N^2!}
\xrightarrow[N\rightarrow\infty]{} 1.
\end{align*}

On the other hand, for 
 $ I_{i, j, a, b}\cdot I_{i, j^\prime, a, b}
(\sigma)
\neq 0 $, 
there is one possible choice for 
$ \sigma( \phi_N(i, j, a)) $
and at most
$ (N^2 -1) ! $
possible choices for the other values of 
$ \sigma$, hence
\begin{align*}
\frac{1}{N^4} \mathbb{E} \big(
 \sum_{i, a, b, j, j^\prime} 
 I_{i, j, a, b}\cdot I_{i, j^\prime, a, b}  
  \big) \leq \frac{1}{N^4}\cdot N^5 \cdot \frac{(N^2-1)!}{N^2!}
  \xrightarrow[ N \rightarrow \infty ]{} 0,
\end{align*}
and property (ii) follows.
\end{proof}	

\begin{lemma}\label{lemma:02:3}
The following relations hold true almost surely
for sequences 
$ \big( \sigma_N \big)_N $ 
with each
$ \sigma_N $ 
a permutation from $ \mathcal{S}([N]^2) $:
\begin{enumerate}
\item[(i)] Suppose that
$ ( f_N)_N $, $ ( g_N)_N $ 
are two given sequences of maps 
and 
$ ( \eta_N)_N $
is a sequence of permutations 
such that 
$ f_N, g_N : [ N]^2 \rightarrow [N]^2 $
and 
$ \eta_N \in \mathcal{S}([ N ]^2).$
%
Then
\begin{align*}
\lim_{N \rightarrow \infty} 
N^{-(1+\theta)} \cdot 
\big| 
\{ (i, j) \in [N]^2:\ f_N(i, &j)  \neq g_N(i, j)\\
  \textrm{ and } 
 &  \sigma_N ( f_N(i, j)) = \eta_N \circ \sigma_N ( g_N(i, j))  \}
\big| = 0.
\end{align*}
\item[(ii)] Suppose that 
$ (h_N)_N $ 
is a sequence of maps and
 $ (\omega_N)_N $, $ (\eta_N)_N $
are sequences of permutations,
$ h_N : [N]^2 \rightarrow [N] $ 
and 
$ \omega_N, \eta_N \in \mathcal{S}([ N]^2) $
with
$ (\omega_N)_N $ 
either the identity permutation or the matrix transpose.
Then
\begin{align*}
\lim_{N \rightarrow \infty }
 N^{-(\frac{5}{2} + \theta)} \cdot  \big| \{ (i, j, a, b) \in [ N ]^4:\ 
  &\omega_N ( b, i) \neq ( a, h_N(i, j)) \textrm{ and }\\
 &\sigma_N ( a, h_N(i, j) ) = \eta_N \circ \sigma_N \circ \omega_N ( b, i)\}
 \big| = 0
\end{align*}
\end{enumerate}
\end{lemma}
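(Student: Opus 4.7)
The approach mirrors the template of the preceding Lemmas~\ref{lemma:02:1} and~\ref{lemma:02:2}: for each part define an indicator random variable on $\mathcal{S}([N]^2)$ counting the configurations in question, bound an appropriate moment of its sum, and conclude almost sure convergence via Borel-Cantelli after applying Markov's inequality.

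For part (i), set
\[
I_{i,j}(\sigma) = \begin{cases} 1 & \textrm{if } f_N(i,j) \neq g_N(i,j) \textrm{ and } \sigma(f_N(i,j)) = \eta_N \circ \sigma(g_N(i,j)),\\ 0 & \textrm{otherwise,}\end{cases}
\]
and $X_N = \sum_{i,j} I_{i,j}$. When $I_{i,j}(\sigma) = 1$, the distinct points $f_N(i,j)$ and $g_N(i,j)$ are constrained: there are at most $N^2$ choices for $\sigma(g_N(i,j))$, each determining $\sigma(f_N(i,j))$, and $(N^2-2)!$ choices for the remaining values. Hence $\mathbb{E}(I_{i,j}) \leq N^2(N^2-2)!/(N^2)! = O(N^{-2})$, so $\mathbb{E}(X_N) = O(1)$ and Markov's inequality yields $\mathbb{P}(N^{-2} X_N > \varepsilon) = O(N^{-2})$, which is summable; Borel-Cantelli concludes.

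For part (ii), define the analogous indicator $I_{i,j,a,b}(\sigma)$ for the condition displayed in the statement and set $Y_N = \sum_{i,j,a,b} I_{i,j,a,b}$. The same first-moment bound now gives only $\mathbb{E}(Y_N) = O(N^2)$ and hence $\mathbb{P}(N^{-3}Y_N > \varepsilon) = O(N^{-1})$, which is not summable. I would therefore estimate the second moment
\[
\mathbb{E}(Y_N^2) = \sum \mathbb{E}(I_{i,j,a,b} \cdot I_{i',j',a',b'})
\]
by stratifying according to which of the four points $\omega_N(b,i)$, $(a, h_N(i,j))$, $\omega_N(b',i')$, $(a', h_N(i',j'))$ coincide. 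In the generic stratum, where all four points are distinct, each term is at most $N^4(N^2-4)!/(N^2)! = O(N^{-4})$ and, combined with the $N^8$ index choices, contributes $O(N^4)$. In each degenerate stratum, coincidences among the four points force equalities among the indices -- via bijectivity of $\omega_N$ -- and the consequent drop in the number of admissible tuples outweighs the smaller number of constraints on $\sigma$, keeping each stratum's contribution at $O(N^4)$ or below. Markov's inequality then gives $\mathbb{P}(N^{-3}Y_N > \varepsilon) \leq \mathbb{E}(Y_N^2)/(\varepsilon^2 N^6) = O(N^{-2})$, summable, and Borel-Cantelli concludes.

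The main technical obstacle is the case analysis for $\mathbb{E}(Y_N^2)$: one must verify that each degenerate stratum -- for example $\omega_N(b,i) = \omega_N(b',i')$ (which forces $(b,i) = (b',i')$), $\omega_N(b,i) = (a', h_N(i',j'))$ (which determines $(b,i)$ in terms of $(i',j',a')$), or $(a, h_N(i,j)) = (a', h_N(i',j'))$ combined with the matching first-coordinate constraint -- contributes at most $O(N^4)$. Each case is handled by a separate count of admissible index tuples (using bijectivity of $\omega_N$ and $\eta_N$, and where necessary the trivial bound $\sum_{j} \mathbf{1}[h_N(i,j) = c] \leq N$) multiplied by the appropriate per-tuple expectation $N^{2r}(N^2-2r)!/(N^2)!$, where $2r$ is the number of distinct points of $\sigma$ being constrained. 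This is precisely the bookkeeping pattern carried out in the proof of Lemma~\ref{lemma:02:2}(ii), only slightly more elaborate because two of the four distinguished points are constructed from $\omega_N$ and two from $h_N$.
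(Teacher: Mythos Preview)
Your proposal is correct and follows essentially the same approach as the paper: first-moment Markov/Borel--Cantelli for part (i), and second-moment Markov/Borel--Cantelli for part (ii) with a stratification by coincidences among the four points. The paper organizes the case analysis in (ii) by setting $Z(\vec{v})=\{(a,h_N(i,j)),\omega_N(b,i)\}$ and splitting according to $|Z(\vec{v})\cap Z(\vec{v}')|\in\{0,1,2\}$, which is exactly the bookkeeping you describe.
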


\begin{proof}
For part (i), denote 
$ D = \{ (i, j) \in [ N ]^2:\  f_N(i, j) \neq g_N(i, j) \}$
and consider
the random variable on 
$ \mathcal{S}([ N]^2) $
given by
\[ I_{i, j} (\sigma) = \left\{ 
\begin{array}{ll}
1 & \textrm{if }  \sigma ( f_N(i, j)) = \eta_N \circ \sigma ( g_N(i, j)) \\
0 & \textrm{otherwise}.
\end{array}
\right. \]
	 

Via Markov's inequality and Lemma \ref{lem:BC} it suffices to show that 
$ \displaystyle \mathbb{E} 
\big( \sum_{(i,j) \in D } I_{i, j}  \big) = O(N^0)
$

For 
$ I_{i, j} ( \sigma) = 1 $, 
there are 
$ N^2$ 
possible for 
$ \sigma_N ( f _N(i, j)) $,
each giving one possible choice for 
$ \sigma_N ( g_N(i, j)) $ 
and 
$ (N^2 -2)! $
possible choices for the rest of values of 
$ \sigma $. 
Therefore:
\begin{align*}
\mathbb{E}\big( \sum_{(i,j) \in D } I_{i, j}  \big)
< N^2 \cdot \frac{N^2 \cdot (N^2 -2)!}{N^2!} 
\xrightarrow[ N \rightarrow \infty ]{}1,
\end{align*}
hence the conclusion.

For part (ii), denote  
$ V =\{ (i, j, a, b) \in [ N ]^4 :\ \omega_N ( b, i)\neq (a, h_N(i, j)) \}$
and consider the random variable on 
$ \mathcal{S}([ N]^2) $
given by
\begin{align*}
F_{i, j, a, b} (\sigma) = \left\{ 
\begin{array}{ll}
1 & \textrm{ if }  \sigma (a, h_N(i, j)) = \eta_N \circ  \sigma \circ \omega_N (b, i)\\
0 & \textrm{ otherwise. }
\end{array}
 \right. 
\end{align*}
With these notations, it suffices to show that 
$ \displaystyle  \mathbb{E}
 \big( ( \sum_V  F_{i, j, a, b} )^2 \big) 
 = O(N^{4}) $.
 
To simplify the writing, we shall use the notations
$ \vec{v} $ 
(respectively
$ \vec{v}^\prime $)
 for 
 $ (i, j, a, b)$
 (respectively
 $( i^\prime, j^\prime, a^\prime, b^\prime)$);
 also, let
 $ Z(\vec{v}) = \{ (a, h_N(i, j)),  \omega_N ( b, i)\}$
 and introduce the sets
 $ W = \{ (\vec{v}, \vec{v}^\prime ) \in V \times V :\ 
  \vec{v} \neq \vec{v}^\prime \} $,
  and, for
  $ s =0, 1, 2 $,
  let 
  \begin{align*}
  W_s = \{ (\vec{v}, \vec{v}^\prime ) \in W :\  \big|
  Z( \vec{v}) \cap Z(\vec{v}^\prime)) \big| = s             \}.
  \end{align*} 
  We have that 
  \begin{align*}
 \mathbb{E}
 \big( ( \sum_V  F_{i, j, a, b} )^2 \big) & =
  \mathbb{E}
 \big( \sum_V  F_{i, j, a, b}^2 \big)
 + 
 \mathbb{E} \big( 
 \sum_{W} F_{i, j, a, b} F_{i^\prime, j^\prime, a^\prime, b^\prime}
 \big)\\ 
& = 
   \mathbb{E}
 \big( \sum_V  F_{i, j, a, b} \big)
 + \sum_{s=0}^2 
  \mathbb{E} \big( 
 \sum_{W_s} F_{i, j, a, b} F_{i^\prime, j^\prime, a^\prime, b^\prime}
 \big).
  \end{align*}
Since  the random variables 
$ F_{i, j, a, b} $
are identically distributed,  
\begin{align*}
\frac{1}{N^4}  \mathbb{E}
\big( \sum_V  F_{i, j, a, b} \big) =
 \big| V \big| \cdot \frac{1}{N^4} \mathbb{E}
 \big(   F_{i, j, a, b} \big) < N^4 \cdot \frac{1}{N^4} = 1.
\end{align*}  

 If 
 $ (\vec{v}, \vec{v}^\prime) \in W_0 $,
 then there are at most 
 $ N^2 $ 
 possible choices for 
 $  \sigma( a, h_N(i, j))$,
 each giving one possible choice for 
 $ \sigma ( \omega_N(b, i))\big) $, 
  less than
 $ N^2 $
 possible choices for the pair 
 $\big( \sigma( a^\prime, h_N(i^\prime, j^\prime)) , \sigma ( \omega_N(b^\prime, i^\prime)) \big)$ 
 and at most 
 $ (N^2 - 4)! $ 
 possible choices for the rest of the values of 
 $ \sigma $. Therefore:
 \begin{align*}
 \frac{1}{N^4} \cdot  \mathbb{E} \big( 
 \sum_{W_0} F_{i, j, a, b} F_{i^\prime, j^\prime, a^\prime, b^\prime}
 \big)
 \leq
  \frac{1}{N^4} \cdot N^8 \cdot \frac{N^4 \cdot (N^2 -4)!}{N^2!}
 \xrightarrow[ N \rightarrow \infty]{}1.
 \end{align*}
 
 If
 $ (\vec{v}, \vec{v}^\prime) \in W_1 $
 then there are at most 
 $ N^2$ 
 possible choices for 
 $ \sigma( a, h_N(i, j)) $
 each giving one choice for 
 $ \sigma ( Z(\vec{v}) \cup Z(\vec{v}^\prime) $
 and at most 
 $ (N^2 -3)! $ 
 possible choices for the rest of values of 
 $ \sigma $. 
 Also, since 
 $ \omega_N $ 
 is either the identity of the matrix transpose, we have that 
 $ | W_1 | \leq N^7 $. 
 Therefore
 \begin{align*}
  \frac{1}{N^4} \cdot  \mathbb{E} \big( 
 \sum_{W_1} F_{i, j, a, b} F_{i^\prime, j^\prime, a^\prime, b^\prime}
 \big)
 \leq
 \frac{1}{N^4} \cdot N^7 \cdot \frac{N^2 \cdot (N^2 -3)!}{N^2!}
 \xrightarrow[ N \rightarrow \infty]{}0.
 \end{align*}
 
 If
 $ (\vec{v}, \vec{v}^\prime) \in W_2 $
 then there are at most 
 $ N^2$ 
 possible choices for 
 $ \sigma( a, h_N(i, j)) $
  each giving one choice for 
 $ \sigma ( Z(\vec{v}) \cup Z(\vec{v}^\prime) $
 and at most 
 $ (N^2 -2)! $ 
 possible choices for the rest of values of 
 $ \sigma $.
 Moreover, if
 $ (a, h_N(i, j)) = (a^\prime, h_N(i^\prime, j^\prime)) $
 and 
 $\omega_N(b, i) = \omega_N(b^\prime, i^\prime)$, 
 then
 $ (a, b, i) = (a^\prime, b^\prime, i^\prime)$, 
 and
 $ | W_2 | \leq N^5$.
  If 
  $ (a, h_N(i, j)) = \omega_N(b^\prime, i^\prime) $
  and 
  $ \omega_N(b, i) = (a^\prime, h_N(i^\prime, j^\prime)) $,
  then
  $a=b^\prime$ and $b=a^\prime$ 
  when 
  $\omega_N$ 
  is the identity, respectively
  $a=i^\prime$ and $i=a^\prime$
  when
  $\omega_N $ is the matrix transpose.
  Hence 
  $ | W_2 | \leq N^6$.
  We have that
  \begin{align*}
 \frac{1}{N^4} \cdot  \mathbb{E} \big( 
 \sum_{W_2} F_{i, j, a, b} F_{i^\prime, j^\prime, a^\prime, b^\prime}
 \big)
 \leq
 \frac{1}{N^4} \cdot N^6 \cdot \frac{N^2 \cdot (N^2 -2)!}{N^2!}
 \xrightarrow[ N \rightarrow \infty]{}1
 \end{align*}
and the conclusion follows.
\end{proof}	
\begin{lemma}\label{lemma:02:4}
Suppose that 
$ (\eta_{1, N})_N, (\eta_{2, N})_N, (\omega_N)_N $ 
are given sequences of permutations such that
$ \omega_N $ 
is either the identity or the matrix transpose.
Then, almost surely for 
$ ( \sigma_N)_N $ 
we have that
\begin{align*}
\lim_{N \rightarrow \infty} N^{-(\frac{5}{2} + \theta)} \cdot 
\big| \{ (i, j, k, l, a, b) \in [ N ]^6:\ (a, k) \neq& \omega_N (k,  l) \textrm{ and } 
\sigma_N (a, k) = \eta_1(b, i), \\
 & \sigma_N ( \omega_N (k, l)) = \eta_2(i, j) \}
\big| = 0.
\end{align*}

\end{lemma}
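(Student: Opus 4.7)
The plan is to adapt the Borel--Cantelli/Markov strategy used in Lemmas~\ref{lemma:02:1}--\ref{lemma:02:3}. Introduce the indicator
\[ F_{i,j,k,l,a,b}(\sigma) = \begin{cases} 1 & \textrm{if } (a,k) \neq \omega_N(k,l),\ \sigma(a,k) = \eta_{1,N}(b,i),\ \sigma(\omega_N(k,l)) = \eta_{2,N}(i,j) \\ 0 & \textrm{otherwise} \end{cases} \]
on $\mathcal{S}([N]^2)$, and set $S_N = \sum_{(i,j,k,l,a,b) \in [N]^6} F_{i,j,k,l,a,b}$. Markov's inequality applied to $S_N^2$ yields $\mathbb{P}(S_N > \varepsilon N^3) \leq \varepsilon^{-2}\cdot N^{-6}\mathbb{E}(S_N^2)$, so the almost sure conclusion will follow from Borel--Cantelli once we establish the uniform bound $\mathbb{E}(S_N^2) = O(N^4)$, since this renders $\mathbb{P}(S_N > \varepsilon N^3) = O(N^{-2})$ summable in $N$.

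To estimate $\mathbb{E}(S_N^2) = \sum_{\vec v, \vec v'}\mathbb{E}(F_{\vec v}F_{\vec v'})$ with $\vec v = (i,j,k,l,a,b)$ and $\vec v' = (i',j',k',l',a',b')$, set $X_1 = (a,k)$, $X_2 = \omega_N(k,l)$, $X_3 = (a',k')$, $X_4 = \omega_N(k',l')$, and let $Y_1, \dots, Y_4$ denote the corresponding right-hand sides. Partition the sum over $(\vec v, \vec v')$ according to the coincidence pattern on $\{1,2,3,4\}$ induced by the $X_\alpha$; for $F_{\vec v} F_{\vec v'}$ to be nonzero, the corresponding partition for $Y_\alpha$ must match. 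The condition $X_1 \neq X_2$, $X_3 \neq X_4$ enforced by $F$ leaves only partitions whose blocks avoid $\{1,2\}$ and $\{3,4\}$, which restricts the number $c$ of off-diagonal mergers to $c \in \{0, 1, 2\}$.

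For a partition with $s = 4 - c$ distinct $X$-values, the uniform bound
\[ \mathbb{P}(\sigma(x_1) = y_1, \dots, \sigma(x_s) = y_s) = \frac{(N^2 - s)!}{(N^2)!} = O(N^{-2s}) \]
holds for distinct $x_\alpha$'s and $y_\alpha$'s. On the combinatorial side, each merger $X_\alpha = X_\beta$ yields $2$ coordinate equations, and the invertibility of $\eta_{1,N}, \eta_{2,N}$ turns the forced $Y_\alpha = Y_\beta$ into $2$ further coordinate equations. A direct case-by-case inventory then shows that each allowed partition contributes at most $O(N^4)$ to $\mathbb{E}(S_N^2)$: the discrete case $c = 0$ gives the dominant $N^{12} \cdot N^{-8} = N^4$; the single-merger cases $c = 1$ give $N^8 \cdot N^{-6} = N^2$; and the two double-merger cases $c = 2$ give at most $N^6 \cdot N^{-4} = N^2$. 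Summing yields $\mathbb{E}(S_N^2) = O(N^4)$.

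I expect the subtle step to be the ``swap'' partition $\{1,4\},\{2,3\}$ at $c = 2$, where the identifications $(a,k) = \omega_N(k',l')$ and $\omega_N(k,l) = (a',k')$ have to be unpacked separately for $\omega_N = \id$ and $\omega_N = \top$, and where $i'$ is determined twice (once from $(b,i)$ via $Y_1 = Y_4$ and once from $(i,j)$ via $Y_2 = Y_3$), yielding a residual consistency constraint on $(b,i,j)$. This residual only sharpens the estimate, so the $O(N^4)$ bound is maintained, and the lemma follows.
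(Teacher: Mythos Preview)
Your proof is correct and follows essentially the same route as the paper: reduce via Borel--Cantelli and Markov to the second-moment bound $\mathbb{E}(S_N^2)=O(N^4)$, then split the sum over pairs $(\vec v,\vec v')$ according to the coincidence pattern among the four constraint sites $X_1,\dots,X_4$, using the bijectivity of $\eta_{1,N},\eta_{2,N}$ to force the matching coincidences among $Y_1,\dots,Y_4$. The paper packages the same decomposition slightly differently, writing $Z(\vec v)=\{(a,k),\omega_N(k,l)\}$ and stratifying by $s=|Z(\vec v)\cap Z(\vec v')|$, and is content with the cruder uniform bound $|W_s^\ast|\le N^8$ for $s\ge1$ rather than your sharper case-by-case counts; but the mechanism and the final estimate are identical.
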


\begin{proof}
As in the proof of the preceding Lemma \ref{lemma:02:3}(ii), we use the notation
$ \vec{v} = (i, j, k, l, a, b)$, 
$ \vec{v}^\prime = (i^\prime, j^\prime, k^\prime, l^\prime, a^\prime, b^\prime)$, and we
let
$ Z( \vec{v}) =\{ (a, k), \omega_N ( k, l) \} $
and
$ Z( \vec{v}^\prime) = \{ (a^\prime, k^\prime), \omega_N( k^\prime, l^\prime)\}$.
Consider the sets ( $ s =0, 1, 2$):
\begin{align*}
&V = \{ (i ,j, k, l, a, b) \in [ N]^6 :\  (a, k) \neq \omega_N( k, l) \}\\
& W = \{ (\vec{v}, \vec{v}^\prime)  \in V^2 :\  \vec{v} \neq \vec{v}^\prime \}\\
& W_s = \{ (\vec{v}, \vec{v}^\prime)  \in W :\ 
|Z(\vec{v} ) \cap Z( \vec{v}^\prime) | = s \}
\end{align*}

Next, consider the random variable 
$ F^{a, b}_{i, j, k, l} $
on
$ \mathcal{S}( [ N]^2)$
given by
\begin{align*}
F^{a, b}_{i, j, k, l}(\sigma ) = \left\{
\begin{array}{ll}
1, & \textrm{ if } \sigma (a, k) = \eta_1(b, i)
\textrm{ and } \sigma ( \omega_N (k, l)) = \eta_2(i, j)\\
0, & \textrm{ otherwise. }
\end{array}
 \right.
\end{align*}

Using Lemma \ref{lem:BC} and Markov Inequality, it suffices to show that 
\begin{align*}
\frac{1}{N^4}\mathbb{E} \big( (\sum_V F_{i, j, k, l}^{a, b})^2 \big) = O(N^{0}).
\end{align*}

For 
$ F^{a, b}_{i, j, k, l}(\sigma ) \neq 0 $,
with
$ (i, j, k, l, a, b) \in V $,
there is at most one possible choice for 
$ \sigma( a, k) $ 
and 
$ \sigma( \omega_N (k, l)) $
and 
$ (N^2-2)! $ 
possible choices for the rest of the values of 
$ \sigma $.
 Therefore
 \begin{align*}
\frac{1}{N^4}
\mathbb{E} \big( \sum_V( F_{i, j, k, l}^{a, b})^2 \big)
= 
\frac{1}{N^4}
\mathbb{E} \big(  F_{i, j, k, l}^{a, b} \big)
\leq \frac{1}{N^4} \cdot N^6 \cdot 
\frac{(N^2 -2)!}{N^2!} 
\xrightarrow[N \rightarrow\infty]{}0.
 \end{align*}
 
 If 
 $ (\vec{v}, \vec{v}^\prime) \in W_s $, 
 then for
 $ F^{a, b}_{i, j, k, l} F^{a^\prime, b^\prime}_{i^\prime, j^\prime, k^\prime, l^\prime}
  ( \sigma) \neq 0 $
  there is at most one possible choice for 
$ \sigma \big( Z(\vec{v}) \cup Z(\vec{v}^\prime) \big) $
  and 
$ (N^2 - | Z(\vec{v}) \cup Z(\vec{v}^\prime) |)! $
possible choices for the rest of the values of 
$ \sigma $.
Therefore
\begin{align*}
\frac{1}{N^4}
\mathbb{E} \big( \sum_{W_s} 
F^{a, b}_{i, j, k, l} F^{a^\prime, b^\prime}_{i^\prime, j^\prime, k^\prime, l^\prime} \big)
\leq
\frac{ | W_s^\ast|}{N^4} \cdot
\frac{(N^2 - | Z(\vec{v}) \cup Z(\vec{v}^\prime) |)! }{ N^2!}
\end{align*}
where 
$ W_s^\ast = \{ (\vec{v}, \vec{v}^\prime) \in W_s: 
 F^{a, b}_{i, j, k, l} F^{a^\prime, b^\prime}_{i^\prime, j^\prime, k^\prime, l^\prime}  \neq 0 
 \}$.
Hence it suffices to show that 
\begin{align}\label{ws}
 | W_s^\ast | \leq N^{4 + 2| Z(\vec{v}) \cup Z(\vec{v}^\prime) | }  
= N^{12 -2 | Z(\vec{v}) \cap Z(\vec{v}^\prime) |  }.
\end{align}

For 
$ s = 0 $,
relation (\ref{ws}) is trivially verified, as 
$ W_0^\ast \subset  V \times V \subseteq [N]^{12} $. 
If
$ s \geq 1 $, 
then either
$ (a, k) \in Z(\vec{v}^\prime ) $ 
or 
$ \omega_N (k, l) \in Z( \vec{v}^\prime ) $.
Suppose that 
$ (a, k) \in Z(\vec{v}^\prime )$.
If 
$ (a, k) = (a^\prime, k^\prime)$,
then 
$ F^{a, b}_{i, j, k, l} F^{a^\prime, b^\prime}_{i^\prime, j^\prime, k^\prime, l^\prime}  \neq 0
$
gives that 
$ \eta_1(b, i) = \eta_1(b^\prime, i^\prime)$,
so
$(b, i) = (b^\prime, i^\prime)$.
It follows that 
$(\vec{v}, \vec{v}^\prime)$
is uniquely determined by 
$ (i, j, k, l, a, b, l^\prime, a^\prime)$, 
that is 
$ | W_s^\ast | \leq N^8 $, 
which implies (\ref{ws}).
If 
$ (a, k) = \omega_N ( k^\prime, l^\prime) $
then again 
$ F^{a, b}_{i, j, k, l} F^{a^\prime, b^\prime}_{i^\prime, j^\prime, k^\prime, l^\prime}  \neq 0
$
gives that 
$ \eta_1(b, i) = \eta_2 (i^\prime, j^\prime)$
so 
$ (i^\prime, j^\prime) = \eta_2^{-1} \circ \eta_1 (b, i) $.
It folows that 
$(\vec{v}, \vec{v}^\prime)$
is uniquely determined by 
$ (i, j, k, l, a, b, a^\prime, b^\prime)$, 
that is 
$ | W_s^\ast | \leq N^8 $, 
which implies (\ref{ws}).

The case 
$ \omega_N ( k, l) \in Z ( \vec{v}^\prime) $
is similar.
 \end{proof}

	\begin{proof}[Proof of Theorem \ref{main:2} ]
		
From Theorem \ref{thm:3:1}, we can assume that 	
$ G_N^{ \sigma_N } $
and
$ G_N^{\tau_N} $
are both asymptotically circular with individually zero infinitesimal $*$-distribution. Then, using the free moment-cumulant expansion, if suffices to show that 
$ \mathcal{V}_{\overrightarrow{\sigma_N}} ( \pi) = o(N^{-1}) $
unless 
$ \pi $ 
is non-crossing and
$ \sigma_{k, N} = \top \circ \sigma_{l, N} \circ \top $
whenever
$ \pi(k) = l $.

As in the proof of Theorem \ref{thm:3:1}, eventually by modifying $m $,
we can assume that 
$ \pi $
does not have any blocks of the type
$ (k, k+1)$ 
such that 
$ \sigma_{k, N} = \top \circ \sigma_{l, N} \circ \top $.
Next, using Lemma \ref{lemma:02:1} in the same way Lemma \ref{lemma:01:1} was used in the proof of Theorem \ref{thm:3:1}, we can furthermore assume that 
$ \pi $ 
does not have any blocks with consecutive elements. 
It follow that 
$ \pi $ 
must have at least one crossing, and, as shown in the proof of Theorem \ref{thm:3:1}, we can suppose without loss of generality that there is some 
$ b \in [m-2]$ 
such that
$ \pi(1) = b+1 $ 
and
$ \pi(b) = m $.

Let us denote 
$ S =\{ (\sigma_N)_N, (\top \circ \sigma_N \circ \top)_N \} $,
$ T = \{ (\tau_N)_N, (\top \circ \tau_N \circ \top)_N \} $
and 
$ T_1 = \{ (\id_N)_N \} \cup T $.

Suppose first that 
$ m = 4 $. 
It suffices to show that 
the following result holds true for any sequences
$ (\sigma_{s, N})_N \in S\cup T_1 $, 
$ s =1, \dots, 4 $:
\begin{align}\label{eq:7}
\big|\{ (i, j,k,l) \in [ N]^4 :\ \sigma_{1, N} (i, j) = \top \circ& \sigma_{3, N}  ( k, l)  \textrm{ and }\\
\nonumber 
& \sigma_{2, N} (j, k) = \top \circ \sigma_{4, N}( l, i)\} \big| = o(N^2).
\end{align}

Moreover, at least one of the sequences 
$(\sigma_{s, N})_N $ 
	should be from the set $ S $ and at least one from the set $ T_1$, so it suffices to show (\ref{eq:7}) for the following three cases: 
\begin{enumerate}
	\item[(a)] one of the 
	$(\sigma_{s, N})_N $
	is from the set 
	$ S $ 
	and three are from 
	$ T_1 $
	\item[(b)] two of the 
	$(\sigma_{s, N})_N $
are from the set 
	$ S $ 
	and two are from 
	$ T_1 $
	\item[(c)] three of the 
	$(\sigma_{s, N})_N $
are from the set 
	$ S $ 
	and one is 
	$ (\id_N)_N$.
\end{enumerate}

For case (a), via a circular permutation of the set
$ \{1, 2, 3, 4\}$,
we can suppose that 
$ ( \sigma_{2, N})_N \in S $. 
Then, for 
\begin{align*}
f_N (i, j) &= \left(j, (\pi_1\circ \sigma_{3, N}^{-1} \circ \top \circ \sigma_{1, N}) (i, j) \right)\\
g_N (i, j) &= \top \circ \sigma_{4, N} \left(\pi_2\circ \sigma_{3, N}^{-1} \circ \top \circ \sigma_{1, N}) (i, j), i \right)
\end{align*}
Lemma \ref{lemma:02:2}(i) gives that
(\ref{eq:7}) holds true for any 
$ (\tau_N)_N $ 
and almost surely for 
$( \sigma_N)_N $.

 For case (b), note first that it suffices to show 
 (\ref{eq:7}) when 
$ \{i, k \} \cap \{j, l\} = \emptyset $.
 Indeed, if
  $ i=j $,
   then there are at most 
 $ N $ possible choices for the pair
 $ (i, j) $, 
 each determining at most one possible choice for 
 $ (k, l)  = \sigma_{3, N}^{-1} \circ \top \circ \sigma_{1, N}(i, j) $;
  the other cases are similar.

 Next, without loss of generality, we can further assume that 
  $ ( \sigma_{1, N})_N \in T_1 $
  and 
  $ (\sigma_{2, N})_N \in S $.
 
 Suppose that
 $ (\sigma_{4, N})_N \in S $, 
 Hence
 $ \sigma_{4, N} = \omega_N \circ \sigma_N \circ \omega_N $,
 with
 $ \omega_N $ 
 either identity or matrix transpose. 
 Let
 \begin{align*}
& f_N (i, j) =  (j, k) = \left(j, \pi_1\circ \sigma_{3, N}^{-1} \circ \top \circ \sigma_{1, N} (i, j) \right) \\
 & g_N(i, j)  = \omega_N (l, i) = \omega_N \left ((\pi_2\circ \sigma_{3, N}^{-1} \circ \top \circ \sigma_{1, N}) (i, j), i\right) \\
& \eta_N  = \top \circ \omega_N 
 \end{align*}
 Lemma \ref{lemma:02:3}(i) gives that (\ref{eq:7}) holds true if
 $ f(i, j) \neq g(i, j) $, 
 so it suffices to show (\ref{eq:7}) for 
 $ (i, j, k, l) $
 such that 
 $ (j, k) = \omega_N(l, i )$, 
 which, since
 $ i \neq j $
 implies 
 $ \omega_N = \id_N $, 
 $ i = k $ 
 and 
 $ j = l $.
 It suffices then to show that 
 \begin{align}\label{eq:8}
 \big| \{ (i, j)\in [ N ]^2 : \ \sigma_{1, N} (i, j) = \top \circ \sigma_{3, N} (i, j)\} | = o(N^2) 
 \end{align}
 holds true almost surely for 
 $ (\sigma_{1, N})_N, (\sigma_{3, N})_N \in \{ (\id_N)_N, (\tau_N)_N \}$.

If  one of  
$ (\sigma_{1, N})_N, (\sigma_{3, N})_N $
is 
$ (\tau_N)_N $,
and the other is 
$ (\id_N)_N $,
 then the conclusion follows from Lemma \ref{lemma:02:1}(i). If 
 $ \sigma_{1, N} = \sigma_{3, N} $,
 then equation (\ref{eq:8}) gives that 
 $ (i, j) \in \sigma_{3, N}^{-1} ( \{ (t, t): t \in [ N ]\})$,
 hence there are at most $ N $ possible choices for 
 $(i, j)$,
 and the conclusion follows.
 
 Finally, suppose that 
 $ (\sigma_{3, N})_N \in S $. 
 Then let
 $ \sigma_{3, N} = \omega_N \circ \sigma_N \circ \omega_N $
 with 
 $ (\omega_N)_N $
 either the identity permutation or the matrix transpose.
 For tuples 
 $(i, j, k, l)$ 
 with 
 $ j =l $, 
 according to Lemma \ref{lemma:02:1},
 the condition
 $ \sigma_{1, N} (i, j) = \sigma_{3, N}(k, l) $
 is satisfied by 
 $ o(N^2) $ 
 tuples  for any 
  $ (\sigma_{1, N})_N  $
  almost surely
 $ ( \sigma_N)_N $.

Let 
$ D_1 = \{ ( i, j, k, l) \in [ N ]^4:\ \{ i, k\} \cap \{ j, l\} = \emptyset \textrm{ and } j \neq l 
\}$ 
and define the random variable
\begin{align*}
 I_{i, j, k, l}( \sigma)= \left\{ 
\begin{array}{ll}
1 & \textrm{if } \sigma (i, j) = \top \circ  \omega_N \circ \sigma \circ \omega_N ( k, l)  \textrm{ and }
 \sigma (j, k) = \top \circ \sigma_{4, N}( l, i)\\
 0 & \textrm{otherwise}.
\end{array} \right. 
\end{align*}
 Using Markov Inequality and Lemma \ref{lem:BC}, the result is implied by
 \[ \mathbb{E}
  \big( \sum_{(i, j, k, l) \in D_1}  I_{i, j, k, l} \big) = O(N^0). \]
  
 On the other hand, for 
 $ I_{i, j, k, l}( \sigma) = 1 $, 
 there is one possible choice for 
 $ \sigma( \omega_N (k, l)) $
 and 
 $ \sigma( j, k) $
 and 
 $ (N^2 -2 )! $
 possible choices for the rest of the values of
 $ \sigma$. 
 Therefore
 \begin{align*}
 \mathbb{E}
 \big( \sum_{(i, j, k, l) \in D_1}  I_{i, j, k, l} \big)
 < N^{4} \cdot \frac{(N^2 -2)!}{N^2!} 
 \xrightarrow[N \rightarrow \infty ]{}1.
 \end{align*}
 
 For case (c), we can suppose, without loss of generality, that 
 $ \sigma_{1, N} = \id_N $ 
 and 
 $ \sigma_{3, N} = \sigma_N $.
 I.e. we have to show that 
 \begin{align}\label{eq:81}
 \big| \{ (i, j, k, l)  :\ 
  \sigma_N (k, l) = (j, i) 
  \textrm{ and } 
  \sigma_{2, N} (j, k) =  \top \circ \sigma_{4, N} ( l, i)
  \} \big| = o(N^2)
 \end{align}
 holds true almost surely for 
 $ ( \sigma_N)_N $ 
 where ($ s =1, 2 $)
 $ \sigma_{s, N} = \omega_{s, N} \circ \sigma_N \circ \omega_{s, N}$
 and
 $ ( \omega_{s, N})_N $ 
 is either identity transforms or matrix transposes.

 As discussed above, (\ref{eq:81}) is trivially verified if 
 $ i = j $, $ j = k $ or $ i = l $. 
 Furthermore, if 
 $ k = i $, 
 or
 $ l = j $,
 then (\ref{eq:81}) follows applying Lemma \ref{lemma:02:1}(i) 
 for the condition 
 $ \sigma_N (k, l) = (j, i)$. 
 Hence, denoting 
 $ D_2 = \{ (i, j, k, l) \in [ N ]^4:\  i, j, k, l \textrm{ distinct}  \} $
 it suffices to show that (\ref{eq:81}) holds true for 
 $ (i,j, k,l) \in D_2 $,

 Define the random variable on 
 $ \mathcal{S}( [ N]^2)$:
 \begin{align*}
 J_{i,j, k, l}( \sigma) = 
 \left\{ 
 \begin{array}{ll}
 1 & \textrm{ if } \sigma(k, l) = (j, i)
  \textrm{ and }  \sigma_{1, N} ( j, k) = t \circ \sigma_{2, N} (l, i) \\
 0 & \textrm{ otherwise }
 \end{array}\right.
 \end{align*}
 where
 $ \sigma_{s, N} = \omega_{s, N} \circ \sigma \circ \omega_{s, N} $
 for 
 $ s = 1, 2 $.
 
 As before it suffices to show that 
 $ \displaystyle \mathbb{E} \big(\sum_{ (i, j, k, l) \in D_2} J_{i, j, k, l} \big) = O(N^0)$.
 
 Note that if 
 $ (i, j, k, l) \in D_2 $, 
 then
 $ (k, l) $, $ \omega_{1, N} ( j, k) $ 
 and 
 $ \omega_{2, N} (l, i) $
 are distinct.
 Hence, for
 $ J_{i, j, k, l} \neq 1 $, 
 there is one possible choice for 
 $ \sigma(k, l) $,
 $N^2 -1 $
 possible choices for 
 $ \sigma( \omega_{1, N}(j, k))$,
 one possible choice for
 $ \sigma( \omega_{2, N}(l, i))$
 and
 $ (N^2 - 3)! $
 possible choices for the rest of values of 
 $ \sigma $.
 Therefore
 \begin{align*}
 \mathbb{E} 
 \big(\sum_{ (i, j, k, l) \in D_2} J_{i, j, k, l} \big) 
 < 
 N^4 \cdot
 \frac{(N^2-1)\cdot (N^2 -3)!}{N^2!}
 \xrightarrow[N\rightarrow \infty]{}1.
 \end{align*}
 
 Next, suppose that 
 $ m > 4 $.
 As in the proof of Theorem \ref{thm:3:1}, it suffices to show that 
 $ \mathfrak{a}( \{ 1, b, b+1, m\}) < 0 $, 
 i.e. that the following result holds true for any
 $ (\sigma_{s, N})_N \in S\cup T_1 $, 
 where
 $ s \in \{1, b, b+1, m \} $:
 \begin{align}\label{eq:10}
 \big|\{ (i, j,k,l, a, b) \in [ N]^6:\ 
 \sigma_{1, N}(i, j) = & \top \circ  \sigma_{b+1, N} (k, l) 
 \textrm{ and }\\
 \nonumber   & \sigma_{b, N} (a, k) = \top \circ \sigma_{m, N}(b, i) \}\big| = o(N^3).
 \end{align}
 
 We shall prove the statement above by analysing the same cases (a), (b) and (c) as in the setting 
 $ m = 4$.
 
 For case (a), via a circular permutation of the set 
 $[ m ] $ and taking adjoints, we can suppose that  
 $ (\sigma_{b, N} )_N  = ( \sigma_N)_N $.
Putting 
\begin{align*}
 \phi (i, j, a)& = (a, k)= (a, \pi_1 \circ \sigma_{b+1, N}^{-1} \cdot \top \cdot \sigma_{1, N} (i, j))\\
 \psi(i, j, b) &= \sigma_{m, N} (b, i)
 \end{align*}
 note that 
 $ \phi(i, j,a) = \phi(i^\prime, j^\prime, a^\prime) $
 and 
 $ \psi(i, j, b)= \psi(i^\prime, j^\prime, b^\prime) $
 implies 
 $ (i, a, b) = (i^\prime, a^\prime, b^\prime)$,
 and the conclusion follows from Lemma \ref{lemma:02:2}(ii).
 
 For case (b) we can assume, without loss of generality,  that 
 $ ( \sigma_{1, N})_N \in T_1 $
 and 
 $ (\sigma_{b, N})_N \in S $. 
 It suffices to distinguish two subcases, when
 $ (\sigma_{b+1, N})_N \in S $,
 respectively when
 $ (\sigma_{m, N})_N \in S $.
 
 Suppose that 
 $ (\sigma_{m, N})_N \in S $, 
 that is 
 $ \sigma_{m, N} = 
 \omega_N \circ \sigma_{N} \circ \omega_N $
 with 
 $ \omega_N $
 either the identity or the matrix transpose.
 Applying Lemma \ref{lemma:02:3}(ii) for
 $ h(i, j) = k = \pi_1 \circ \sigma_{b+1, N}^{-1} \circ \sigma_{1, N} (i, j)$, 
 we obtain that relation (\ref{eq:10}) with the extra condition 
 $ (a, k) \neq \omega_N (b, i)$ 
 is satisfied for all
 $ (\sigma_{1, N})_N , (\sigma_{b+1, N})_N \in T_1 $
 and almost surely for 
 $ (\sigma_N)_N \in S $. 

Assume that 
$ (a, k) = \omega_N (b, i)$.
If 
$ \omega_N  = \id_N $, 
then the equality 
 $  \sigma_{b, N} (a, k) = t \circ \sigma_{m, N}(b, i) $
 gives that 
 $ \sigma_N (a, k) = t \circ \sigma_N (a, k)$,
 hence 
 $ (a, k) \in \sigma_{N}^{-1}  \{ (s, s):  s \in [  N ]\} $
 so there are at most 
 $ N^2$ possible choices for 
$(a, k,l)$, 
which uniquely determines 
$(i,j, k, l, a, b)$.
If 
$ \omega_N $ 
is the matrix transpose, then the equality 
$  \sigma_{b, N} (a, k) = \top \circ \sigma_{m, N}(b, i) $
gives that 
$ \sigma_N (a, k) = \sigma_N (k, a)$, 
that is 
$ a = k $, 
so again there are at most 
$ N^2 $ 
possible choices for the triple
$ (a, k,l)$.

Suppose that 
$ ( \sigma_{b+1, N})_N \in S $,
that is 
$ \sigma_{b+1, N} = 
\omega_N \circ \sigma_{N} \circ \omega_N $
with 
$ \omega_N $
either the identity or the matrix transpose. 
Applying Lemma \ref{lemma:02:4}, 
we obtain that relation (\ref{eq:10}) with the extra condition 
$ (a, k) \neq \omega_N (k,l)$ 
is satisfied for all
$ (\sigma_{1, N})_N , (\sigma_{b+1, N})_N \in T_1 $
and almost surely for 
$ (\sigma_N)_N \in S $.
 
Assume that
$ (a, k) = \omega_N (k, l)$. 
Then 
$ (a, k, l)$ 
is uniquely determined by 
$(a, k)$.
But
$ (i, j, k, l, a, b) $
is uniquely detemined by 
$ (a, k,l)$, so 
(\ref{eq:10}) follows.

 For case (c), we can assume that 
 $ \sigma_{1, N} = \id_N $
 and 
 $ \sigma_{b+1, N} = \sigma_N $.
 I.e. we have to show that the following relation holds true almost surely for 
 $ ( \sigma_N)_N $:
 \begin{align}\label{eq:c}
 \big| \{ (i, j, k, l, a, b)\in [ N]^6:\  \sigma_N (k, l) = (j, i) \textrm{ and } \sigma_{b, N}(a, k) = \top \circ \sigma_{m, N} (& b ,  i) \}
 \big| \\ \nonumber   & = o(N^3)
 \end{align}
 where
 $ \sigma_{b, N} = \omega_{1, N} \circ \sigma_N \circ \omega_{1, N} $
 and 
 $ \sigma_{m, N} = \omega_{2, N} \circ \sigma_N \circ \omega_{2, N} $
 with 
 $ ( \omega_{s, N})_N $
  either the identity permutation or the matrix transpose. 
  
Note that given 
$ ( \sigma_N)_N $,
the tuple 
$( i, j,k, l, a, b) $ 
is uniquely determined by either of the triples
$ (k, l, a) $ 
and
$ (k, l, b) $
hence
property (\ref{eq:c}) is verified under one of the extra conditions 
$ (k, l)\in\{ \omega_{1, N}(a, k), \omega_{2, N}(b, i) \} $.

If 
$ \omega_{1, N}(a, k) = \omega_{2, N}(b, i)  = (u, v)$, 
then
$ \sigma_{2, N}(a, k) = \top \circ \sigma_{4, N}(b, i)$ 
gives that 
$ \omega_{1, N} (\sigma_N (u, v) ) = \top \circ \omega_{2, N}
(\sigma_N (u, v)) $, 
so either 
$ \omega_{1, N} = \top \circ \omega_{2, N} $
or 
$ \top \circ \sigma_N (u, v) = \sigma_N (u, v) $.
But 
 $ \omega_{1, N} = \top \circ \omega_{2, N} $, 
gives 
$ (k, a) = (b, i) $, 
so 
$ ( i,j, k, l, a, b) $
is uniquely determined by 
$ (k, l)$.
Also, if 
 $ \top \circ \sigma_N (u, v) = \sigma_N (u, v) $, 
 then there are at most 
 $ N $ 
 possible choices for 
 $ \sigma (u, v) $, 
 that is for 
 $ (a,k)$
 so there are at most 
 $ N^2 $
 possible choices for 
 $ (k, l, a)$. 
 
 Denote
 $ V = \{ (i, j, k,l, a, b) \in [ N ]^6 :\ (k, l), \omega_{1, N}(a, k), \omega_{2, N}(b, i) \textrm{ distinct}\} $.
 
 Consider the random variable
 \begin{align*}
 J^{a, b}_{i, j, k, l} ( \sigma) = \left\{
 \begin{array}{ll}
 1 & \textrm{ if }  \sigma (k, l) = (j, i) \textrm{ and } \sigma_1(a, k) = \top \circ \sigma_2 (b, i)\\
 0 & \textrm{ otherwise }
 \end{array}
 \right.
 \end{align*}
where 
$ \sigma_1 = \omega_1 \circ \sigma\circ \omega_1 $
and
$ \sigma_2 = \omega_2 \circ \sigma \circ \omega_2 $,
and applying Markov Inequality and Lemma \ref{lem:BC}, it suffices to show that 
\begin{align}\label{eq:c:e}
N^{-4} \mathbb{E} \big( ( \sum_V J_{i, j, k, l}^{a, b} )^2\big) = O(N^0).
\end{align}

For 
$ J^{a, b}_{i, j, k, l}( \sigma) \neq 0 $
there is one possible choice for 
$ \sigma(k, l) $, 
less than 
$ N^2 $ 
possible choices for 
$ \sigma(\omega_{1, N} (a, k)) $,
 one choice for 
$ \sigma(\omega_{2, N} (b, i))$
and
$ (N^2 -3 )! $ 
possible choices for the other values of 
$ \sigma $. 
Therefore
\begin{align*}
N^{-4} \cdot \mathbb{E} \big ( \sum_V (J_{i, j,k, l}^{a, b})^2 \big)
= 
N^{-4} \cdot \mathbb{E} \big ( \sum_V J_{i, j,k, l}^{a, b} \big)
< N^{-4} \cdot N^6 \frac{N^2 \cdot (N^2 -3)! }{N^2!} 
\xrightarrow[ N \rightarrow \infty ] {} 0.
\end{align*}
  To simplify the writing, let us denote by 
  $ \vec{v}  = (i, j,k, l, a, b)$,
  $ \vec{v}^\prime = (i^\prime, j^\prime, k^\prime, l^\prime)$,
  $ Z(\vec{v}) = \{ (k, l), \omega_{1, N}(a, k), 
  \omega_{2, N}(b, i)\}$
  and
  $ Z(\vec{v}^\prime) = \{ (k^\prime, l^\prime), 
  \omega_{1, N}(a^\prime, k^\prime), \omega_{1, N}(b^\prime, i^\prime)\}$.
  Note that if
  $ Z(\vec{v})= Z(\vec{v}^\prime)$, 
  then
  $ (k, l, a) = (k^\prime, l^\prime, a^\prime) $,
  so 
  $ \vec{v} = \vec{v}^\prime$.
  
  Denoting
  \begin{align*}
  & W =\big\{ (\vec{v}, \vec{v}^\prime ) \in V^2:\ \vec{v} \neq \vec{v}^\prime \big\} \\
  & W_1 = \big\{ (\vec{v}, \vec{v}^\prime ) \in W  :\
  Z(\vec{v}) \cap Z(\vec{v}^\prime)  = \emptyset \big\}\\
  & W_2 =\big\{ (\vec{v}, \vec{v}^\prime ) \in W :\ 
  | Z(\vec{v}) \cap Z(\vec{v}^\prime) | = 1 \big\} \\
  & W_3 = \big\{ (\vec{v}, \vec{v}^\prime ) \in W :\ 
  | Z(\vec{v}) \cap Z(\vec{v}^\prime) | = 2 \big\}\\
  \end{align*}
it follows that 
\begin{align*}
\mathbb{E} \big( \sum_W J_{i, j, k, l}^{a, b} \cdot
J_{i^\prime, j^\prime, k^\prime, l^\prime}^{a^\prime, b^\prime} ) 
= \sum_{ s =1}^3 \mathbb{E} \big(
\sum_{W_s} J_{i, j, k, l}^{a, b} \cdot
J_{i^\prime, j^\prime, k^\prime, l^\prime}^{a^\prime, b^\prime}
\big)
\end{align*} 

  Let us remind that, as discussed above, 
  for
  $ J^{a, b}_{i, j, k, l} (\sigma)\neq 0 $,
  there are at most 
  $ N^2 $ 
  possible choices for 
  $ \sigma ( Z(\vec{v}))$.
 So, for 
 $ J_{i, j, k, l}^{a, b} \cdot
 J_{i^\prime, j^\prime, k^\prime, l^\prime}^{a^\prime, b^\prime} ( \sigma ) \neq 0  $
 there are at most 
 $ N^4$ 
 possible choices for 
 $ \sigma ( Z(\vec{v}) \cup Z ( \vec{v}^\prime) ) $
 and at most 
$ (N^2 - | Z(\vec{v}) \cup Z( \vec{v}^\prime)|)! $
possible choices for the rest of the values of 
$ \sigma $,
therefore
\begin{align*}
 \mathbb{E} \big(
\sum_{W_s} J_{i, j, k, l}^{a, b} \cdot
J_{i^\prime, j^\prime, k^\prime, l^\prime}^{a^\prime, b^\prime}
\big) \leq | W_s | \cdot N^{-4} \cdot \frac{N^4 \cdot (N^2 - | Z(\vec{v}) \cup Z( \vec{v}^\prime)|)! } {N^2 !} 
\end{align*}
but 
$| Z(\vec{v}) \cup Z(\vec{v}^\prime )  | 
= 6 - | Z(\vec{v}) \cap Z(\vec{v}^\prime )  | $,
 so it suffices to show that 
\begin{align}\label{eq:c:sets}
\log_N | W_s | < 
 12 - 2| Z(\vec{v}) \cap Z(\vec{v}^\prime )  | .
\end{align}  
 
 For 
 $ s = 1 $, 
 we have that 
 $ W_1 \in V \times V \in [N]^{12} $, 
 so (\ref{eq:c:sets}) holds true. 
 For 
 $ s = 2 $, 
 since 
 $ | Z(\vec{v}) \cap Z(\vec{v}^\prime )  | = 1 $,
 at least two components of 
 $ \vec{v} $ are equal to two components of 
 $ \vec{v}^\prime $, 
 hence 
 $ |W_2 | \leq N^{10}$,
 which implies (\ref{eq:c:sets}).
 
 If
 $ s = 3 $, 
 note that all subsets with two elements of 
 $ Z(\vec{v})$ 
 contain four components of 
 $ \vec{v} $,
  thus (\ref{eq:c:sets}) being satisfied,
 except for
 $ \{ (k, l), \omega_{1, N} (a, k) \}$
 If 
 $ \{ (k, l), \omega_{1, N} (a, k) \}
 = \{ (k^\prime, l^\prime), \omega_{1, N} (a^\prime, k^\prime) \} $, 
 then, for
$ J_{i, j, k, l}^{a, b} \cdot
J_{i^\prime, j^\prime, k^\prime, l^\prime}^{a^\prime, b^\prime} (\sigma) \neq 0 $,
there are at most 
$ N^2 $ 
possible choices for 
$ \sigma ( Z(\vec{v})) $,
each giving one possible choice for
$ \sigma ( \{ (k, l), \omega_{1, N}( a, k) \} ) $,
Hence for 
$ \sigma( Z( \vec{v}^\prime)) $
and 
$( N^2 - 4)! $ 
possible choices for the rest of the values of 
$ \sigma$.
Therefore, denoting
$ W_4 =  \big\{ (\vec{v}, \vec{v}^\prime) \in W :\
\{ (k, l) , (a, k) \} = \{ (k^\prime, l^\prime), (a^\prime, k^\prime)\}   \big \}$,
 we have that 
\begin{align*}
\mathbb{E} \big(\sum_{W_4} J_{i, j, k, l}^{a, b} \cdot
J_{i^\prime, j^\prime, k^\prime, l^\prime}^{a^\prime, b^\prime}   \big) 
< N^{9} \cdot N^{-4} \cdot \frac{N^2 \cdot (N^2 - 4)!}{N^2!}
\xrightarrow[ N \rightarrow\infty]{}0,
\end{align*}
and the conclusion follows.
	\end{proof}


\section{Joint Infinitesimal Distribution of a Gaussian Random Matrix and its Transpose}

In this section, we investigate the their joint infinitesimal distribution of Gaussian random matrix and its transpose, we describe their joint infinitesimal free cumulants in the following theorem. In particular, we show that Gaussian random matrix and its transpose are not asymptotically infinitesimally free. 

\begin{thm}\label{main 3}
For each positive integer $ N $, 
 consider 
 $ G_N $ a complex Gaussian random matrix and 
 $ G_N^{\top} $ 
 its matrix transpose.
 The asymptotic values (as $ N \rightarrow \infty $) of the infinitesimally free joint cumulants of 
 $ G_N $ 
 and 
 $ G_N^{\top} $ 
 are computed according to the following rule (here each 
 $ \varepsilon_j $  is either the identity or the matrix transpose):
 \begin{align*}
 \lim_{N \rightarrow\infty}\kappa^\prime_p ( G_N^{\varepsilon_1}, G_N^{\varepsilon_2}, \dots, G_N^{\varepsilon_p}) = \left\{
 \begin{array}{ll}
 1 & \textrm{ if } p = 2m, \varepsilon_s \neq \varepsilon_{m+s}, \textrm{ for } s \in [m]\\
 0 & \textrm{ otherwise. }
 \end{array}  \right.
 \end{align*}
\end{thm}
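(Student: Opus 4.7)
The approach is to compute the joint moments $\varphi_N(G_N^{\varepsilon_1}\cdots G_N^{\varepsilon_p})$ up to order $N^{-1}$ via the Wick expansion, extract $\varphi'$ as the $N^{-1}$-coefficient, and then invert the infinitesimal moment--cumulant relation to recover $\kappa'_p$.

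First, setting each $\sigma_{k,N}$ to be the identity when $\varepsilon_k = 1$ and the transpose when $\varepsilon_k = \top$, equation~\eqref{eq:1} yields
\[
\varphi_N(G_N^{\varepsilon_1}\cdots G_N^{\varepsilon_p}) = \sum_{\pi \in P_2(p)} N^{\mathfrak{a}(\pi,\overrightarrow{\varepsilon})}.
\]
Each pair $(k,l)\in\pi$ imposes the identifications $i_k = i_l$, $i_{k+1}=i_{l+1}$ (``straight'', when $\varepsilon_k\neq\varepsilon_l$) or $i_k = i_{l+1}$, $i_{k+1}=i_l$ (``reverse'', when $\varepsilon_k=\varepsilon_l$). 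Viewing the $p$ factors as sides of a $p$-gon and each pair as a ribbon gluing, one obtains a surface whose Euler characteristic $\chi$ satisfies $\mathcal{V}_{\overrightarrow{\varepsilon}}(\pi)=N^{\chi-2}$. The $\chi=2$ (sphere) contributions reproduce the asymptotic freeness of \cite{MP16}: they correspond to non-crossing $\pi$ with monochromatic pairs, giving $\kappa_p=0$ for $p\neq 2$ and $\kappa_2(G^{\varepsilon_1},G^{\varepsilon_2})=\delta_{\varepsilon_1,\varepsilon_2}$ asymptotically.

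Second, the $\chi=1$ (projective plane) contributions determine $\varphi'(G^{\varepsilon_1}\cdots G^{\varepsilon_p})$. Using the ribbon-graph rules above together with the index-counting estimates in the spirit of Section~\ref{Section:3}, I would enumerate the pairs $(\pi,\overrightarrow{\varepsilon})$ for which the vertex count of the glued surface equals exactly $p/2$, yielding an explicit combinatorial formula for $\varphi'$ in terms of $\overrightarrow{\varepsilon}$.

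Third, apply the infinitesimal moment--cumulant identity $\varphi'(a_1\cdots a_p)=\sum_{\pi\in NC(p)}\partial\kappa_\pi$: since $\kappa_q=0$ for $q\neq 2$, the only contributing partitions consist of a single ``$\kappa'$-block'' $V$ (contributing $\kappa'_{|V|}(V)$) together with monochromatic pair blocks covering $[p]\setminus V$, yielding the recursive identity
\[
\kappa'_p(G^{\varepsilon_1},\dots,G^{\varepsilon_p}) = \varphi'(G^{\varepsilon_1}\cdots G^{\varepsilon_p})
- \sum_{\substack{\pi\in NC(p),\,\pi\neq\hat 1 \\ V\in\pi}} \kappa'_{|V|}(V)\prod_{W\in\pi,\,W\neq V}\kappa_2(W).
\]
Proceeding by induction on $p$ and substituting the explicit formula for $\varphi'$ from the previous step, one extracts $\kappa'_p$. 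The main obstacle is the combinatorial step: enumerating the $\mathbb{RP}^2$-realizing pair partitions for each $\overrightarrow{\varepsilon}$ and checking that the Möbius-type cancellation in the inductive identity collapses to the binary value stated---namely $1$ exactly when $\overrightarrow{\varepsilon}$ satisfies all $m$ inequality conditions $\varepsilon_1\neq\varepsilon_{m+1}$, $\varepsilon_m\neq\varepsilon_{2m}$, and $\varepsilon_s\neq\varepsilon_{2m+1-s}$ for $2\leq s\leq m-1$, and $0$ otherwise.
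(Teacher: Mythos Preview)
Your plan is sound and parallels the paper's strategy at the top level---Wick expansion, isolate the $N^{-1}$ layer, match against $\sum_{\rho\in NC}\partial\kappa_\rho$---but the execution you propose differs from the paper's, and the step you yourself flag as ``the main obstacle'' is precisely where the paper puts all the work.

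The paper does not invoke surfaces or Euler characteristic. Instead it proves a classification lemma (Lemma~\ref{lemma:v:pi}) by direct index-counting with the machinery of Lemma~\ref{lemma:01} and Remark~\ref{remark:c1-2}: the set $P_2(n)$ is split into three pieces $P_2(\overrightarrow{\sigma},0)$, $P_2(\overrightarrow{\sigma},1)$, $P_2(\overrightarrow{\sigma},2)$ on which $\mathcal{V}_{\overrightarrow{\sigma}}(\pi)$ equals $1$, $N^{-1}$, $O(N^{-2})$ respectively. The middle piece is given an \emph{explicit} description: $\pi\in P_2(\overrightarrow{\sigma},1)$ iff there is a subset $B=\{i(1)<\cdots<i(2m)\}$ on which $\pi$ acts by the shift $i(s)\mapsto i(m+s)$ with $\sigma_{i(s)}\neq\sigma_{i(m+s)}$, while on $[n]\setminus B$ the pairing is non-crossing and monochromatic. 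In your language this is exactly the enumeration of the $\chi=1$ gluings, but the paper obtains it by pure case analysis (reduce away monochromatic interval-pairs, locate a minimal crossing, bound $\mathfrak{a}_{\pi,\overrightarrow{\sigma}}$) rather than by appealing to the classification of surfaces.

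The payoff of that explicit description is that the final step becomes a one-line \emph{verification} rather than an induction: the description of $P_2(\overrightarrow{\sigma},1)$ is already packaged as ``a distinguished block $B$ meeting the claimed $\kappa'$-condition, together with a non-crossing monochromatic pair partition on the complement'', which is manifestly in bijection with the nonzero terms of $\sum_{\rho\in NC(n)}\partial\kappa_\rho$ for the stated $(\kappa,\kappa')$. So $\varphi'=|P_2(\overrightarrow{\sigma},1)|$ and the theorem follows immediately; no M\"obius cancellation or recursion is needed.

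Your topological framing is a legitimate alternative and arguably more conceptual, but as written the proposal stops exactly where the content is: you have not carried out the enumeration of the $\mathbb{RP}^2$ pairings for a prescribed straight/reverse labelling, and you have not executed the inductive inversion. If you pursue your route, the enumeration will in the end force the same case analysis as Lemma~\ref{lemma:v:pi}; the paper's advantage is that by phrasing the $N^{-1}$ class in the form above, the moment--cumulant matching is automatic.
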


Before proceeding with the proof of the Theorem above, notice that simple computations give the following particular cases of Lemmata \ref{lemma:02:2}, \ref{lemma:02:3}, \ref{lemma:02:4}.

\begin{remark}\label{remark:c1-2}
 Suppose that for 
 $ 1 \leq s \leq 4 $,  
 $ \sigma_{s, N} $ is either the identity or the matrix transpose in $\mathcal{S}([ N]^2 )$
 and denote
 \begin{align*}
 C_1( \sigma_1, \dots, \sigma_4) 
 = \big| \{ (i, j, k, l) \in [ N ]^4:\, & \sigma_1( i, j) = \top \circ \sigma_3 ( k, l), \\  &\sigma_2 ( j, k) = \top \circ \sigma_4 (l, i) \} \} \big| \\
 C_2( \sigma_1, \dots, \sigma_4) = 
 \big| \{  (i,j, k, l, a, b) \in [ N ]^6:\, & \sigma_1( i, j) = \top \circ \sigma_3 (k, l), \\ &  \sigma_2( a, k) = \top \circ \sigma_4 (b, i) 
 \}\big|.
 \end{align*}
 Then
 \begin{align*}
 C_1( \sigma_1, \dots, \sigma_4 ) = \left\{ 
 \begin{array}{ll}
 N^2 & \textrm{ if } \sigma_1 \neq \sigma_3 \textrm{ and } \sigma_2 \neq \sigma_4\\
 N & \textrm{ otherwise,}
 \end{array}
 \right.
 \end{align*}
and
 \begin{align*}
C_2( \sigma_1, \dots, \sigma_4 ) = \left\{ 
\begin{array}{ll}
N^3 & \textrm{ if } \sigma_1 \neq \sigma_3 \textrm{ and } \sigma_2 \neq \sigma_4\\
N^2 & \textrm{ otherwise.}
\end{array}
\right.
\end{align*}
\end{remark}
\begin{proof}
Since the identity commutes with the transpose, for 
$ 1 \leq s, t \leq 4 $,
we have that
$ \sigma_s^{-1} \circ \top \circ \sigma_t (i, j) = \left\{
\begin{array}{ll}
(i, j) & \textrm{ if } \sigma_s \neq \sigma_t \\
(j, i) & \textrm{ if } \sigma_s = \sigma_t.
\end{array} \right.$. 
So, if
$ \sigma_1 \neq \sigma_3 $ 
and
$ \sigma_2 \neq \sigma_2 $, 
then
\begin{align*}
C_1(\sigma_1, \dots, \sigma_4 ) &= \big| \{ (i, j, k, l) \in [ N]^4:\ (i, j)= (k, l) ,\  (j, k)= (l, i)
\} \big| \\
& = \big| \{ (i, j, k, l)\in[ N]^4 :\ i=k , \  j = l 
\} \big|= N^2
\end{align*}
and
\begin{align*}
C_2(\sigma_1, \dots, \sigma_4 ) & = 
\big|\{ 
(i, j, k, l, a, b) \in [ N ]^6:\ (i, j)= ( k, l) ,\ (a, k) = (b, i)
\}\big|\\
& = \big|\{ (i, j, k, l, a, b) \in [ N ]^6:\ i = k, \ j= l, \ a = b 
\} \big| = N^3.
\end{align*}

If 
$ \sigma_1 = \sigma_3 $, 
then 
$  i = l$ 
and 
$ j = k $, 
hence
$ \sigma_2( j, k) = \sigma_4 ( l, i)$ 
gives that 
$ i =j = k =l $, that is 
$ C_1( \sigma_1, \dots, \sigma_4 ) = N $.
Also, 
$ \sigma_2(a, k) = \sigma_4 ( b, i) $
gives that $ (b, i) = \sigma_4 ^{-1} \circ \sigma_2 ( a, j)$,
so $ (i,j,k, l, a, b) $ 
is uniquely determined by 
$(a, j)$, 
that is
$ C_2( \sigma_1, \dots, \sigma_4) = N^2 $. 
The argument for the case 
$ \sigma_2 = \sigma_4 $
 is similar.
\end{proof}

\begin{lemma}\label{lemma:v:pi}
 Let 
 $ n, N $ 
 be a positive integers and suppose that for each 
 $ s \in [ n ] $, 
 $ \sigma_s $ 
 is either the identity or the matrix transpose in 
 $\mathcal{S}([ N]^2 ) $.
Denote 
$ \overrightarrow{\sigma} =(\sigma_1, \dots, \sigma_n)$, 
and, with the notations from Section \ref{section:1}, write
\begin{align*}
	\mathbb{E} \circ \tr \big(G_N^{\sigma_1} \cdot G_N^{\sigma_2} \cdots G_N^{\sigma_n}\big) = \sum_{ \pi \in P_2(n)} \mathcal{V}_{\overrightarrow{\sigma}}(\pi).
\end{align*}
Also, write the set 
$ P_2(m) $ 
as the disjoint union 
\begin{align*}
 P_2(n) = P_2( \overrightarrow{\sigma}, 0) \cup 
P_2( \overrightarrow{\sigma}, 1) \cup
P_2( \overrightarrow{\sigma}, 2)
\end{align*}
where
\begin{align*}
P_2( \overrightarrow{\sigma}, 0) 
& = \{ \pi \in P_2(n):\
\pi \textrm{ is non-crossing and } \sigma_s = \sigma_{ \pi(s)} \textrm { for all } s \in [ n ] \}\\
P_2( \overrightarrow{\sigma}, 1) 
& = \{ \pi \in P_2(n):\ \textrm{ there exists some }
B = \{ i(1), i(2), \dots, i(2m) \}\subseteq  [ n ] 
\textrm{ with }\\
& 
 i(1) < \dots < i(2m)
  \textrm{ such that } 
  \pi(i(s)) = i(m +s)  
  \textrm{ and }
 \sigma_{i(s)} \neq \sigma_{i( m + s )}\\
 &\textrm{ and } \sigma_k = \sigma_{\pi(k)} \textrm{ whenever } k \in [ n ] \setminus B \textrm{ and  the permutation } \sigma \vee 1_B 
 \\
 & \textrm{ ( obtained from $\sigma $ by considering $ B $ as a block) is non-crossing} \big\}\\
P_2( \overrightarrow{\sigma}, 2) 
& = 
P_2 ( n) \setminus ( P_2( \overrightarrow{\sigma}, 0)
\cup P_2( \overrightarrow{\sigma}, 1)  ).
\end{align*}
With the notations above, we have that:
\begin{align}\label{v:pi}
\mathcal{V}_{\overrightarrow{\sigma}}(\pi) = \left\{
\begin{array}{ll}
1 & \textrm { if }
 \pi \in P_2( \overrightarrow{\sigma}, 0) \\
N^{-1} & \textrm{ if } 
\pi \in P_2( \overrightarrow{\sigma}, 1)\\
O(N^{-2}) & \textrm{ if } 
\pi \in P_2( \overrightarrow{\sigma}, 2).
\end{array}
 \right.
\end{align}
\end{lemma}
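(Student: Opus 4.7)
My plan is to compute $\mathcal{V}_{\overrightarrow{\sigma}}(\pi)$ directly by reducing the delta functions in its definition to explicit identifications on the indices $i_1,\dots,i_n$. Since each $\sigma_s\in\{\id,\top\}$ commutes with $\top$ and squares to the identity, for every pair $(k,l)\in\pi$ with $k<l$ the defining equation $\sigma_k(i_k,i_{k+1})=\top\circ\sigma_l(i_l,i_{l+1})$ collapses to
\[
(i_k,i_{k+1})=\begin{cases}(i_{l+1},i_l) & \text{if } \sigma_k=\sigma_l\ (\text{S-type}),\\ (i_l,i_{l+1}) & \text{if } \sigma_k\neq\sigma_l\ (\text{O-type}).\end{cases}
\]
Consequently $\mathcal{V}_{\overrightarrow{\sigma}}(\pi)=N^{c-n/2-1}$, where $c$ is the number of equivalence classes of $\{i_1,\dots,i_n\}$ (with $i_{n+1}=i_1$) under the identifications produced by all pairs of $\pi$. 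Proving \eqref{v:pi} thus reduces to determining $c$ in each of the three cases.

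The cleanest framework for counting $c$ is the surface-gluing model: regard the cyclic word of length $n$ as the boundary of a disk whose $n$ edges are identified in pairs according to $\pi$, where S-type gluings are orientation-reversing (the usual Wick identification) and O-type gluings are orientation-preserving (producing a cross-cap). Writing $\chi$ for the Euler characteristic of the resulting closed surface, Euler's formula yields $c-\tfrac{n}{2}+1=\chi$. For $\pi\in P_2(\overrightarrow{\sigma},0)$ all gluings are S-type and the pairing is non-crossing, so the quotient is the $2$-sphere: $\chi=2$, $c=\tfrac{n}{2}+1$, and $\mathcal{V}_{\overrightarrow{\sigma}}(\pi)=1$. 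For $\pi\in P_2(\overrightarrow{\sigma},1)$ I would show that the $m$ mutually crossing O-type pairs inside $B$ produce exactly one cross-cap: by direct inspection the identifications $i_{i(s)}\sim i_{i(m+s)}$ and $i_{i(s)+1}\sim i_{i(m+s)+1}$, $s=1,\dots,m$, collapse the $2m$ boundary positions of $B$ into exactly $m$ classes, while the non-crossing S-type pairing outside $B$ attaches only orientable spherical pieces. The resulting surface is therefore the real projective plane, so $\chi=1$, $c=\tfrac{n}{2}$ and $\mathcal{V}_{\overrightarrow{\sigma}}(\pi)=N^{-1}$. For $\pi\in P_2(\overrightarrow{\sigma},2)$ any remaining pairing must contain either an S-type crossing not absorbed by a $B$-rainbow (forcing a handle, orientable genus $\geq 1$) or an O-type configuration that introduces a second cross-cap; in both scenarios $\chi\leq 0$, hence $c\leq\tfrac{n}{2}-1$ and $\mathcal{V}_{\overrightarrow{\sigma}}(\pi)\leq N^{-2}$. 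A finer case analysis, in the spirit of the use of Lemma \ref{lemma:01} in the proof of Theorem \ref{thm:3:1}, rules out the borderline $\chi=0$ configurations and produces the claimed $o(N^{-2})$ bound.

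The main obstacle is the Case (1) assertion: for arbitrary $m$ and arbitrary interaction between the $B$-rainbow and the outside pairing, one must check that the resulting surface is \emph{exactly} the real projective plane rather than a non-orientable surface of higher genus. I would handle this by induction on $m$: the base case $m=1$ is a single O-type pair together with a non-crossing S-type cap, which is a direct computation, while the inductive step peels off the outermost O-type pair of $B$ together with the portion of the outside pairing it captures, using a reduction analogous to equation \eqref{v1-1} to preserve $\mathcal{V}_{\overrightarrow{\sigma}}(\pi)$. A secondary difficulty lies in Case (2), where ruling out surfaces of Euler characteristic $0$ (torus or Klein bottle) requires showing that any such configuration would force $\pi$ to satisfy the structural hypotheses defining $P_2(\overrightarrow{\sigma},0)$ or $P_2(\overrightarrow{\sigma},1)$, contradicting its membership in $P_2(\overrightarrow{\sigma},2)$.
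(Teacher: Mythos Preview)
Your topological reformulation is a genuinely different route from the paper's.  The paper never introduces surfaces or Euler characteristics; it works entirely with the quantities $\mathfrak{a}_{\pi,\overrightarrow{\sigma}}(B)$ from Lemma~\ref{lemma:01}, peels off interval blocks $(k,k+1)$ with $\sigma_k=\sigma_{k+1}$ via the reduction~\eqref{v1-1}, and then does a case analysis (non-crossing with a bad interval, crossing with $d\neq n$, crossing with $d=n$ and $\pi([b])\neq[n]\setminus[b]$, etc.), invoking Remark~\ref{remark:c1-2} for the base cases.  Your observation that the identifications reduce to an exact formula $\mathcal{V}_{\overrightarrow{\sigma}}(\pi)=N^{\chi-2}$, with $\chi$ the Euler characteristic of the quotient surface, packages the three cases much more uniformly: $P_2(\overrightarrow{\sigma},0)$ is the sphere, $P_2(\overrightarrow{\sigma},1)$ is $\mathbb{RP}^2$, and everything else has $\chi\le 0$.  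For the application to Theorem~\ref{main 3} (which only needs the $N^0$ and $N^{-1}$ terms) this is cleaner than the paper's bookkeeping, and the inductive peeling you describe for the $P_2(\overrightarrow{\sigma},1)$ case is exactly the content of~\eqref{v1-1}.

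There is, however, a genuine gap in your Case~(2).  You claim that a finer analysis ``rules out the borderline $\chi=0$ configurations'' so as to obtain $o(N^{-2})$.  This is impossible: take $n=4$, $\pi=\{(1,3),(2,4)\}$, and all $\sigma_s$ equal to the identity.  Both pairs are S-type and they cross, so the quotient surface is the torus, $\chi=0$, and $\mathcal{V}_{\overrightarrow{\sigma}}(\pi)=N^{-2}$ exactly.  Since all $\sigma_s$ coincide, no choice of $B$ satisfies the $P_2(\overrightarrow{\sigma},1)$ condition, so $\pi\in P_2(\overrightarrow{\sigma},2)$.  Thus $\chi=0$ genuinely occurs in $P_2(\overrightarrow{\sigma},2)$ and the sharp bound is $O(N^{-2})$, not $o(N^{-2})$.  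In fact the paper's own proof establishes only $\mathfrak{a}_{\pi,\overrightarrow{\sigma}}\le -2$ throughout Case~(2), i.e.\ $\mathcal{V}_{\overrightarrow{\sigma}}(\pi)\le N^{-2}$, so the ``$o(N^{-2})$'' in~\eqref{v:pi} should be read as $O(N^{-2})$; this is all that is used downstream in the proof of Theorem~\ref{main 3}.  Your argument is correct once you replace the unreachable $o(N^{-2})$ target by $O(N^{-2})$ and drop the claim about excluding $\chi=0$.
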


\begin{proof}
 As in (\ref{v1-1}) from the proof of Theorem \ref{thm:3:1}, if 
 $ \pi(k) = k+1 $ 
 and 
 $ \sigma_k = \sigma_{k+1} $,
 then	
 $ 
 \mathcal{V}_{ \overrightarrow{\sigma}}(\pi) = \mathcal{V}_{\overrightarrow{\sigma}^\prime} ( \pi^\prime)
 $
where
$ \pi^\prime$
and
$ \overrightarrow{\sigma}^\prime $
are obtained by removing
$ (k, k+1) $
respectively
$ \sigma_{k}, \sigma_{k + 1 } $
from
$ \pi$, 
respectively
$ \overrightarrow{\sigma} $. 	
If
$ \pi \in P_2( \overrightarrow{\sigma}, 0) $ 
then  iterating (\ref{v1-1}) 
$ n/ 2 $ times
gives the first part of (\ref{v:pi}).
Moreover, if 
$ \pi \notin P_2( \overrightarrow{\sigma}, 0) $, 
then, (\ref{v1-1}) allows us to assume, without loss of generality that 
$ \sigma_k \neq \sigma_{k+1} $
whenever
$ \pi(k) = k+1 $.

Under all the assumptions above, suppose first that 
$ \pi $ 
is non-crossing. We shall prove (\ref{v:pi}) by induction on 
$ n $. 
If 
$ n =2 $, 
then 
$\pi \in P_2( \overrightarrow{\sigma}, 1) $ 
and
\begin{align*}
\mathcal{V}_{\overrightarrow{\sigma}}(\pi)
= N^{-2} \cdot \big| \{ (i, j)\in [ N]^2:\ 
\sigma_1(i, j) = \top\circ  \sigma_1 \circ \top ( j, i) 
\}\big| = N^{-1},
\end{align*}
hence (\ref{v:pi}) holds true.

If 
$ n \geq 4 $, 
then 
$\pi \in P_2( \overrightarrow{\sigma}, 2) $. 
On the other hand, since
 $ \pi $ 
 is non-crossing, it
 has at least one block which is a segment. Via a circular permutation of the set 
 $ [ n] $,
 we can suppose without loss of generality that
 $ \pi(n-1) = n $, 
 which furthermore implies that
 $ \sigma_{n-1} = \top \circ \sigma_N $.
 Then the condition
 \begin{align*}
 \sigma_{n-1}(i_{n-1}, j_{n-1}) = \sigma_n \circ \top ( i_n, j_n)
 \end{align*}
 gives that 
 $ i_{n-2} = i_{n} $ .
 Therefore, denoting by 
  $\pi^\prime $, 
  respectively by
  $ \overrightarrow{\sigma}^\prime $
  the restrictions of
   $ \pi $,
 respectively of 
 $ \overrightarrow{\sigma} $
 to the set 
 $ [n-2]$,
 we then have that 
 \begin{align*}
 \mathfrak{a}_{ \pi^\prime, \overrightarrow{\sigma}^\prime}\leq
 \mathfrak{a}_{ \pi, \overrightarrow{\sigma}}([n-2])
 \end{align*}
and
\begin{align*}
 \mathfrak{a}_{ \pi, \overrightarrow{\sigma}} \leq
  \mathfrak{a}_{ \pi, \overrightarrow{\sigma}}([ n-2]) -1
\end{align*} 
hence (\ref{v:pi}) follows.

Next, suppose that 
$ \pi $
is crossing. As in the proof of Theorem \ref{thm:3:1}, we can assume that 
$ \pi(1) = b+1 $ 
and 
$ \pi(b) = d $
for 
$ 1 < b<b + 1 < d \leq n $.
If 
$ d \neq n $, 
then
 $\pi \in P_2( \overrightarrow{\sigma}, 2)$ 
 and, as in the proof on Theorem \ref{thm:3:1}, in this case we have that  
$ \mathfrak{a}_{\pi, \overrightarrow{\sigma}} \leq -2 $,
so (\ref{v:pi}) holds true.

If 
$ d =n $ 
and 
$ \sigma_1 = \sigma_{b+1} $ 
or 
$ \sigma_{b} = \sigma_{d} $,
then again
$\pi \in P_2( \overrightarrow{\sigma}, 2) $; 
 Remark \ref{remark:c1-2}  and Lemma \ref{lemma:01} give that 
$ \mathfrak{a}_{\pi, \overrightarrow{\sigma}} \leq -2 $,
so (\ref{v:pi}) holds true.

Suppose that 
$ d = n $, 
$ \sigma_1 \neq \sigma_{b+1}$, 
$ \sigma_b \neq \sigma_d $ 
and if
$ \pi(k) = k+1 $
we have that 
$ \sigma_k \neq \sigma_{k+1} $.
If 
$ n =4 $, 
then 
$ \pi \in P_2( \overrightarrow{\sigma}, 1) $, 
 and Remark \ref{remark:c1-2} gives that 
$ \mathfrak{a}_{\pi, \overrightarrow{\sigma}} = - 1$,
so (\ref{v:pi}) holds true in this case. 
Suppose then that 
$ n > 4 $. 
If there is some
 $ s\in  \{ 2, \dots, b-1\} $ 
 or
 $ s \in \{ b+2, \dots, n-1 \} $
 such that 
 $ \pi(s) = s+1  $,
 then 
 $ \pi \in P_2( \overrightarrow{\sigma}, 2) $; on the other hand, we assumed that 
 $ \sigma_s \neq \sigma_{s+1} $
 so 
 $ \mathfrak{a}_{\pi, \overrightarrow{\sigma}} (\{ s, s+1\}) = -1 $
 and Lemma \ref{lemma:01} gives that 
 $\mathfrak{a}_{\pi, \overrightarrow{\sigma}} \leq -2 $
 so (\ref{v:pi}) holds true. The same argument remains valid if there exists 
 $ a^\prime < b^\prime < c^\prime <d^\prime $
 elements either of 
 $ \{ 2, \dots, b-1\} $ 
 or of
 $\{ b+2, \dots, n-1 \}$
 such that
 $ \pi(a^\prime) = c^\prime $
 and
 $\pi(b^\prime) = d^\prime$
 (see Figure 4). So we can further assume that 
 $ \pi([ b]) =  [ n ] \setminus [ b] $, 
 in particular 
 $ n = 2b $.
 
 Note that the condition 
 $ \pi([ b]) =  [ n ] \setminus [ b ] $
 gives that 
 $ A_{\pi, \overrightarrow{\sigma}} = A_{\pi, \overrightarrow{\sigma}} ([ b] ) $.
 Furthermore, since
 $ \pi(1)= b+1 $ 
 and 
 $ \sigma_1 \neq \sigma_{b+1} $
 we have that 
 $ i_{b+1} = i_1 $,
 so
 $  A_{\pi, \overrightarrow{\sigma}} ([ b] )
 =  A_{\pi, \overrightarrow{\sigma}} ([ b -1] ) $.  
 
 If 
 $ \pi \in P_2( \overrightarrow{\sigma}, 1) $,
 then
 \begin{align*}
 A_{\pi, \overrightarrow{\sigma}} &=  \{(i_1, i_2, \dots, i_{2b}) \in [ N]^{2b}: (i_s, i_s+1) = (i_{m+s}, i_{m+s}+1), 
 i_{2b} = i_1 
 \} \\
 & = \{ (i_1, i_2, \dots, i_b) \in [ N]^b 
 \} ,
 \end{align*}
 and since
  $
 \mathfrak{a}_{\pi, \overrightarrow{\sigma}} = 
 \log_N\big|  A_{\pi, \overrightarrow{\sigma}}\big| - b -1 
 $
 it follows that (\ref{v:pi}) holds true.
 

\setlength{\unitlength}{.13cm}
\begin{equation*}
\begin{picture}(22,16)
\put(-20.5,0){1}

\put(-20,3){\circle*{2}} 
\put(-16.5,0){2}

\put(-16,3){\circle*{2}} 

\put(-12.5,0){3}

\put(-12,3){\circle*{2}} 

\multiput(-9 ,3)(2, 0){4}{\circle*{.5}}

\put(-12.5,0){3}

\put(-12,3){\circle*{2}} 


\put(-2.5,0 ){$b-1$}

\put( 0,3){\circle*{2}}

\put(4.5,0 ){$b$}

\put( 5,3){\circle*{2}}

\put(8.5,0 ){$b+1$}

\put( 12,3){\circle*{2}}



\put(16,3){\circle*{2}}

\put(20,3){\circle*{2}}

\multiput(23 ,3)(2, 0){4}{\circle*{.5}}

\put(28,0){$ n-1$}

\put(32,3){\circle*{2}}


\put(36,0){$ n$}

\put(36,3){\circle*{2}}

\put(-16,3){\line(0,1){6}}

\put(16,3){\line(0,1){6}}

\put(-16,9){\line(1,0){32}}


\put(-12,3){\line(0,1){8}}

\put(20,3){\line(0,1){8}}

\put(-12,11){\line(1,0){32}}


\put(0,3){\line(0,1){10}}

\put(32,3){\line(0,1){10}}

\put(0,13){\line(1,0){32}}

\linethickness{.55mm}


\put(-20,4){\line(0,1){3}}

\put( 12,3){\line(0,1){4}}

\put(-20,7){\line(1,0){32}}


\put( 5,3){\line(0,1){12}}

\put(36,3){\line(0,1){12}}

\put( 5,15){\line(1,0){31}}

\end{picture}
\end{equation*}
\textbf{Figure 8}. 


 If 
 $ \pi \in P_2( \overrightarrow{\sigma}, 2) $,
 then the set
 $
 \{s\in [ b]:\ \pi(s) \neq b +s \textrm{ or } \sigma_s \neq \sigma_{ b+s } \} 
 $
 is non-void.
Denote by
 $ t $ 
 its smallest element. 
 Then
 $ \pi(t-1) = m+ t -1 $
 and
 $ \sigma_{t-1} \neq \sigma_{m + t -1 } $
 so
 $ i_t = i_{m+t} $. 
 Let 
 $ v = \pi( m + t) $.
 Then
 $ v \in [ b-1] \setminus \{ t -1\}$.
 and
 $ i_t = i_{m+t}\in \{ i_v, i_{v+1} \} $.
 If
 $ v \neq t $, 
 then
 $ v \neq t \neq v+1 $.
 If
 $ v = t $,
 using that 
 $ \sigma_t = \sigma_{m+t} $
 we get 
 $ i_t = i_{t + 1} $.
 Either way, there is some
 $ w \in [ b-1] \setminus \{ t\} $
 such that 
 $ (i_1, \dots, i_{b}) \in
  A_{\pi, \overrightarrow{\sigma}}( [ b-1]) $
  implies that 
  $ i_w = i_t $,
  hence
  $ \big|A_{\pi, \overrightarrow{\sigma}}( [ b-1]) \big| 
  \leq b-1   $,
  that is 
  $ \mathfrak{a}_{\pi, \overrightarrow{\sigma}} \leq -2 $,
  and the conclusion follows.

\end{proof}

\begin{proof}[Proof of Theorem 5.1]
	${}$
	
Since for each
$ s \leq n $, 
we have that the sequence
$ ( \sigma_{s, N})_N $ 
is either the sequence of identity permutations  or the sequence of matrix transposes, we can simplify the writing by omitting the index $N$ and writing $\sigma_s $ for $\sigma_{s, N}$. With this convention,  it suffices to show that
\begin{align*}
&\lim_{N \rightarrow \infty}  \big[ \mathbb{E} \circ \Tr ( G_N^{\sigma_{1}} \cdots G_N^{ \sigma_{n}} )
- N \lim_{N \rightarrow \infty } \mathbb{E} \circ \tr 
( G_N^{\sigma_{1}} \cdots G_N^{ \sigma_{n}} ) 
\big]\\
&= \sum_{ \rho \in NC(n) }
\big[ \sum_{ \substack{ B \in \rho\\
		B = ( i(1), \dots, i(p) )} }
	\kappa^\prime_p ( \sigma_{i(1)}, \dots, \sigma_{i(p)})
	\cdot \prod_{\substack{D \in \rho \setminus \{ B\} \\
	D =( j(1), \dots, j(r)) }}	\kappa_r (\sigma_{j(1)},\dots,\sigma_{j(r)}) \big]
\end{align*}
where
\begin{align*}
\kappa_r (\sigma_1, \dots, \sigma_r ) = \left\{
\begin{array}{ll}
1 & \textrm{ if } r=2 \textrm{ and } \sigma_1 = \sigma_2 \\
0 & \textrm{ otherwise}
\end{array} 
\right.
\end{align*}
and
\begin{align*}
\kappa^\prime_p (\sigma_1, \dots, \sigma_p ) = \left\{
\begin{array}{ll}
1 & \textrm{ if } p=2m  \textrm{ and } \sigma_{s} \neq \sigma_{m + s } \textrm{ for } s \in [ m ]\\
0 & \textrm{ otherwise.}
\end{array} 
\right.
\end{align*}

On the other hand, from Lemma \ref{lemma:v:pi}, we have that
\begin{align*}
  \mathbb{E} \circ \tr 
( G_N^{\sigma_{1}} \cdots G_N^{ \sigma_{m}} ) 
 = \big| P_2 ( \overrightarrow{\sigma}, 0) \big| 
 + \frac{1}{N} \big| P_2 ( \overrightarrow{\sigma}, 1)\big| + O(N^{-2}) 
\end{align*}
which gives
\begin{align*}
\lim_{N \rightarrow \infty}  \big[ \mathbb{E} \circ \Tr ( G_N^{\sigma_{1}} \cdots G_N^{ \sigma_{m}} )
- N \lim_{N \rightarrow \infty } \mathbb{E} \circ \tr 
( G_N^{\sigma_{1}} \cdots G_N^{ \sigma_{m}} ) 
\big]  &= \big| P_2 ( \overrightarrow{\sigma}, 1) \big|\\
&= \sum_{\rho\in NC(n)}\chi_{P_2 ( \overrightarrow{\sigma}, 1)} (\rho)
\end{align*}
where
 $ \displaystyle \chi_{P_2 ( \overrightarrow{\sigma}, 1)} (\rho) = 
 \left\{
 \begin{array}{ll}
1 &
 \textrm{ if } \rho \in  P_2 (\overrightarrow{\sigma}, 1) 
	\\
	0 &  \textrm{ otherwise.}
	\end{array}
 \right. $
  
  But the definition of 
$  P_2 ( \overrightarrow{\sigma}, 1) $ reads 
\[ 
 \chi_{P_2 ( \overrightarrow{\sigma}, 1)} (\rho) =
 \sum_{ \substack{ B \in \rho\\
 		B = ( i(1), \dots, i(p) )} }
 \kappa^\prime_p ( \sigma_{i(1)}, \dots, \sigma_{i(p)})
 \cdot \prod_{\substack{D \in \rho \setminus \{ B\} \\
 		D =( j(1), \dots, j(r)) }}	\kappa_r (\sigma_{j(1)},\dots,\sigma_{j(r)}) 
\]
and the conclusion follows.

	\end{proof}

\section*{Acknowledgement}
We would like to thank the anonymous referee for careful reading of the manuscript, and for important remarks which led to an improvement of this paper.
\bigskip


\bibliographystyle{abbrv}
\bibliography{main2.bib}

	





\end{document}